\theoremstyle{plain}
\newtheorem{theorem}{Theorem}[section]
\theoremstyle{remark}
\newtheorem{definition}[theorem]{Definition}
\newtheorem{example}{Example}
\numberwithin{equation}{section}
\NewCommandCopy{\baccent}{\b}
  \DeclareRobustCommand{\b}[1]{\ifmmode\mathbf{#1}\else\baccent{#1}\fi}%
\def\[{\left [}  \def\]{\right ]} \def\({\left (}  \def\){\right )}
\def\underbar#1{\underline{\sbox\tw@{$#1$}\dp\tw@\z@\box\tw@}}
\def\tilde{\widetilde}
\def\newpage{\vfill\eject}
\def\today{\ifcase\month\or
  January\or February\or March\or April\or May\or June\or
  July\or August\or September\or October\or November\or December\fi
  \space\number\day, \number\year}
\def\:={\coloneqq}
\def\=:{\eqqcolon}
\theoremstyle{plain}
  \newtheorem{theorem}{Theorem}[section]
  \newtheorem{proposition}{Proposition}[section]
  \newtheorem{corollary}{Corollary}[section]
  \newtheorem{lemma}{Lemma}[section]
\theoremstyle{definition}
  \newtheorem{definition}{Definition}[section]
  \newtheorem{assumption}{Assumption}
  \newtheorem{example}{Example}[section]
\theoremstyle{remark}
\crefname{assumption}{Assumption}{Assumptions}
\crefname{theorem}{Theorem}{Theorems}
\crefname{proposition}{Proposition}{Propositions}
\crefname{corollary}{Corollary}{Corollaries}
\crefname{lemma}{Lemma}{Lemmas}
\crefname{remark}{Remark}{Remarks}
\crefname{definition}{Definition}{Definitions}
\crefname{example}{Example}{Examples}
\newcommand{\reels}{\mathbb{R}}
\newcommand{\proba}{\mathbb{P}}
\newcommand{\Tau}{\mathrm{T}}
\begin{document}

\begin{frontmatter}
\title{Explicit formula of boundary crossing probabilities for continuous local martingales to constant boundary}
\runtitle{Formula of boundary crossing probabilities}

\begin{aug}

\author[A]{\fnms{Yoann}~\snm{Potiron}\ead[label=e1]{potiron@fbc.keio.ac.jp}},
\address[A]{Faculty of Business and Commerce, Keio University\printead[presep={,\ }]{e1}}
\end{aug}

\begin{abstract}
An explicit formula for the probability that a continuous local martingale crosses a one or two-sided random constant boundary in a finite time interval is derived. We obtain that the boundary crossing probability of a continuous local martingale to a constant boundary is equal to the boundary crossing probability of a standard Wiener process to a constant boundary up to a time change of quadratic variation value. This relies on the constancy of the boundary and the Dambis, Dubins-Schwarz theorem for continuous local martingale. The main idea of the proof is the scale invariant property of the time-changed Wiener process and thus the scale invariant property of the first-passage time. As an application, we also consider an inverse first-passage time problem of quadratic variation.
\end{abstract}

\begin{keyword}[class=MSC]
\kwd[Primary ]{60J65}
\kwd[; secondary ]{60G40}
\kwd{60H05}
\end{keyword}

\begin{keyword}
\kwd{boundary crossing probabilities}
\kwd{continuous local martingale}
\kwd{constant random boundary}
\kwd{Dambis Dubins-Schwarz theorem}
\kwd{inverse first-passage time problem}
\end{keyword}

\end{frontmatter}

\section{Introduction}
\subsection{Problem} 
Let $(Z_t)_{t \in \reels^+}$ be a continuous local martingale, while $g$ and $h$ are two random constant boundaries. We are concerned with one-sided and two-sided boundary crossing probabilities of the form
\begin{eqnarray}
\label{def_bcp}
P_g^Z(t) & = & \proba \big( \sup_{0 \leq s \leq t} Z_s    \geq g\big),\\ \label{def_bcp2} P_{g,h}^Z(t) & = & \proba \big( \sup_{0 \leq s \leq t} Z_s   \geq g \text{ or } \inf_{0 \leq s \leq t} Z_s  \leq h \big), 
\end{eqnarray}
i.e., the probability that the process $Z$ crosses the boundary or one of both boundaries between $0$ and $t$ for a finite time $t \geq 0$. 

As an application, we also consider the following inverse first-passage time (FPT) problem. For any known random survival cdf,
we want to determine the quadratic variation $\langle Z\rangle$ such that the boundary crossing probability (\ref{def_bcp})-(\ref{def_bcp2}) is equal to the survival cdf.

\subsection{Motivation}
The use of boundary crossing probabilities and FPT in statistics can at least be traced back to the one-sample Kolmogorov-Smirnov statistic for which the stochastic process is equal to the difference between the true and empirical cumulative distribution function (cdf). The main first 
application was in sequential analysis. There has been a first main focus when the stochastic process $Z$ is a random walk. Since the problem is harder to solve in that case, the literature relies on a continuous approximation and develop theoretical tools when the stochastic process $Z$ is a Wiener process (see \cite{gut1974moments}, \cite{woodroofe1976renewal}, \cite{woodroofe1977second}, \cite{lai1977nonlinear}, \cite{lai1979nonlinear} and \cite{siegmund1986boundary}). Another field of application is in survival analysis where \cite{matthews1985asymptotic} show that tests for constant hazard involve the FPT of an Ornstein-Uhlenbeck process and \cite{butler1997stochastic} present a Bayesian approach when the stochastic process is a semi-Markov process. \cite{eaton1977length} discuss the application of FPT for hospital stay. \cite{aalen2001understanding} study the case when the stochastic process is a Markov process. Another application is in pricing barrier options in mathematical finance (\cite{roberts1997pricing}). There are also some applications in econometrics where \cite{abbring2012mixed} and \cite{renault2014dynamic} study mixed first-hitting times where the stochastic process is respectively a spectrally negative Levy process and the sum of a Brownian motion and a positive linear drift. Detailed reviews on FPT are available in \cite{lee2006threshold} and \cite{lawless2011statistical} (Section 11.5, pp. 518-523).

In financial econometrics, FPT of a continuous local martingale are useful models of endogeneity when estimating the quadratic variation of the continuous local martingale. \cite{fukasawa2010central} considers the symmetric two-sided boundary case where $h = -g$. \cite{robert2011new} and \cite{robert2012volatility} (see also Section 4.4 in \cite{potiron2020local} for its extension) introduce the model with uncertainty zones where the two-sided boundary is dynamic. \cite{fukasawa2012central} consider the non-symmetric two-sided boundary case where $h \neq -g$. Finally, \cite{potiron2017estimation} estimate the quadratic covariation between two local martingales based on endogenous observations generated by FPT of an It\^{o}-semimartingale to a stochastic boundary process. Although these models are useful, the question remains in how general they are. With the use of the inverse FTP problem results, we can deduce the distributions allowed, and even associate to each distribution a quadratic variation of the stochastic process $Z$.

The main current applications when $Z$ is a continuous local martingale are pricing barrier options for boundary crossing probabilities and financial econometrics for inverse FPT problems. Most papers on boundary crossing probabilities have been done under the assumption that $Z$ is a Wiener process with constant variance, whereas we have empirical evidence that the variance is not constant in financial applications. In fact, it is common to assume under arbitrage theory that $Z$ follows a continuous local martingale process. Despite the growing importance of these results for applications, there is no explicit formula for general continuous local martingale in the literature.

\subsection{Existing results}
When $Z$ is a standard Wiener process with linear drift and the boundaries are linear, \cite{doob1949heuristic} gives explicit formulae (Equations (4.2)-(4.3), pp. 397-398) based on elementary geometrical and analytical arguments when $T=\infty$, $\sigma$ is nonrandom, the drift is null and the boundaries are nonrandom linear with nonnegative upper trend and nonpositive lower trend. \cite{malmquist1954certain} uses Doob's transformation (Section 5, pp. 401-402) to obtain an explicit formula conditioned on the starting and final values of the Wiener process (Theorem 1, p. 526) in the one-sided boundary case. \cite{anderson1960modification} derives an explicit formula conditioned on the final value of the Wiener process (Theorem 4.2, pp. 178-179) and integrate with respect to the final value of the Wiener process to get an explicit formula (Theorem 4.3, p. 180) in the two-sided boundary case with linear drift. \cite{potiron2023formula} gives several not fully explicit formulae for the probability that a Wiener process with stochastic drift process and random variance crosses a one-sided stochastic boundary process by the Girsanov theorem but the formula.

There are also some instances where formulae of the Laplace transform are obtained. For square root boundaries $g(t) = \sqrt{a+ t}$ with $a >0$ we can use Doob's transformation, and express Equations (\ref{def_bcp})-(\ref{def_bcp2}) as boundary crossing probabilities of an Ornstein-Unlenbeck process to a constant boundary (see \cite{breiman1967first}). However, the boundary crossing probabilities of an Ornstein-Unlenbeck process to a constant boundary are only known in the form of Laplace transform. \cite{daniels1969minimum} studies the probability that an Ornstein-Uhlenbeck process crosses a certain curved boundary, by equating it to the corresponding probability for a Wiener process to cross a transformed boundary which is chosen to yield an explicit formula. \cite{nobile1985exponential} investigate the asymptotic behaviour of the first-passage time (FPT) probability distribution function (pdf) to a large constant boundary by an Ornstein–Uhlenbeck process. The boundary crossing probabilities of a jump diffusion process with linear drift to a constant boundary are obtained in the form of Laplace transform (see \cite{kou2003first}). Finally, \cite{borovkov2008exit} find an explicit formula for the Laplace transform of the FPT of Levy-driven Ornstein–Uhlenbeck processes to a two-sided constant boundary under the assumption that positive jumps of the Levy process are exponentially distributed. 
 
Since there is no available explicit formula in general, there is a large literature on approximating and computing numerically these boundary crossing probabilities (\ref{def_bcp})-(\ref{def_bcp2}). When $Z$ is a standard Wiener process, \cite{strassen1967almost} (Lemma 3.3, p. 323) shows that $P_g^Z$ is continuous with continuous derivative when $g$ is continuous with continuous derivative. \cite{durbin1971boundary}, \cite{wang1997boundary} and \cite{novikov1999approximations} use piecewise-linear boundaries to approximate the general boundaries. \cite{durbin1985first} gives
a formula for a general boundary but which depends on asymptotic conditional expectations whose approximations are studied in
\cite{salminen1988first}. \cite{buonocore1987new}, \cite{giorno1989evaluation} and \cite{gutierrez1997first} consider the case when $Z$ is a general diffusion.

As for the inverse FPT problem, there is as far as the author knows no related paper on it, and this a completely new problem.

\subsection{This paper}
In this paper, we first derive an explicit formula for the one-sided and two-sided boundary crossing probability (\ref{def_bcp})-(\ref{def_bcp2}). We derive the results in two cases, i.e., (i) a simpler case when the boundary is nonrandom constant and the quadratic variation of the continuous local martingale is a time-varying function but not a stochastic process and (ii) a more complicated case when the boundary is random constant and the quadratic variation of the continuous local martingale is a stochastic process.

We describe first the boundary crossing probability main results in the one-sided (i) case. We obtain that the boundary crossing probability of a continuous local martingale to a constant boundary is equal to the boundary crossing probability of a standard Wiener process to a constant boundary up to a time change of quadratic variation value. More specifically, we obtain that (see Theorem \ref{theorem_nrc1})
$$P_g^Z (t)  =  P_g^W \big(\langle Z\rangle_{t}\big) \text{ for } t \geq 0,$$ 
where $W$ is defined as a standard Wiener process. This relies on the constancy of the boundary and the Dambis, Dubins-Schwarz theorem for continuous local martingale. The main idea of the proof is the scale invariant property of the time-changed Wiener process and thus the scale invariant property of the FPT. In the two-sided (i) case, the main result of this paper adapts since the arguments of boundary constancy and Dambis, Dubins-Schwarz theorem still hold. We obtain that (see Theorem \ref{theorem_nrc2}) 
$$P_{g,h}^Z (t)  =  P_{g,h}^W \big(\langle Z\rangle_{t}\big) \text{ for } t \geq 0.$$

To apply the Dambis, Dubins-Schwarz theorem in the one-sided (ii) case, the main idea is to rewrite the FPT to a random boundary as an equivalent FPT to a nonrandom boundary, i.e., by defining the new stochastic process as $Y = \frac{Z}{g}$. If we define the set of nonrandom nondecreasing functions as $\mathcal{I}(\reels^+,\reels^+)$ and the distribution of $\langle Y\rangle$ as $F_{\langle Y\rangle}$, we get $P_1^Y$ by integrating 
$$\proba (\sup_{0 \leq s \leq t} Y_s \geq g |\langle Y\rangle=y)$$ 
with respect to the value of $y \in \mathcal{I}(\reels^+,\reels^+)$. We obtain that (see Theorem \ref{theorem_rc1})
\begin{eqnarray*}
P_1^Y (t) & = & \int_{\mathcal{I}(\reels^+,\reels^+)} P_1^W \big(y_t\big) dF_{\langle Y\rangle}(y)\text{ for } t \geq 0.
\end{eqnarray*}

To apply the Dambis, Dubins-Schwarz theorem when the two-sided boundary is random and the quadratic variation is a stochastic process, we cannot rewrite the FPT to a random two-sided boundary as an equivalent FPT to a nonrandom two-sided boundary since there are two boundaries. However, we are able to adapt the arguments with a two-sided boundary. We define $u=(g,h,\langle Z\rangle)$, its distribution as $F_{u}$ and $\mathcal{S} = \mathcal{J} \times \mathcal{I}(\reels^+,\reels^+)$. We get $P_{g,h}^Z$ by integrating 
$$\proba (\sup_{0 \leq s \leq t} Z_s   \geq g \text{ or } \inf_{0 \leq s \leq t} Z_s  \leq h | u=(g_0,h_0,z))$$ with respect to the value of $(g_0,h_0,z) \in \mathcal{S}$.
If we assume that the stochastic process $Z$ is independent from the two-sided boundary $(g,h)$, we obtain that (see Theorem \ref{theorem_rc2})
\begin{eqnarray}
\label{eq_theorem_rc2}
P_{g,h}^Z (t) & = & \int_{\mathcal{S}} P_{g_0,h_0}^W \big(z_t\big) dF_{u}(g_0,h_0,z)\text{ for } t \geq 0.
\end{eqnarray}

We also derive an explicit solution for the inverse FPT problem. We consider the one-sided and two-sided boundary with (i) and (ii) cases. We derive the results in the general quadratic variation and absolutely continuous quadratic variation cases. The proofs are mainly based on topological argument in $\reels^+$ and the use of the explicit formula of boundary crossing probability (\ref{def_bcp})-(\ref{def_bcp2}).

We describe first the main results of the inverse FPT problem in the one-sided (i) case. If we define $F$ as the survival cdf, we obtain that the explicit solution is of the form (see Theorem \ref{theorem_F_nrc1})
\begin{eqnarray*}
\langle Z^F\rangle_{t} &=& (P_g^W)^{-1}(F(t)) \mathbf{1}_{\{0 < F(t) < 1\}} \text{ for } t \geq 0.
\end{eqnarray*} 
If we assume that the quadratic variation is absolutely continuous of the form $\langle Z^f\rangle_{t} = \int_0^t \sigma_{s,f}^2 ds \text{ for } t \geq 0$ with survival pdf $f$, we obtain that the explicit solution is of the form (see Theorem \ref{theorem_f_nrc1})
\begin{eqnarray*}
\label{def_expsol_f_nrc1}
\sigma_{t,f}^2 = &\frac{f(t)}{f_g^W((P_g^W)^{-1}(F(t)))} \mathbf{1}_{\{0 < F(t) < 1\}} & \text{ almost everywhere for } t \geq 0.
\end{eqnarray*}

In the two-sided (i) case, we obtain that the explicit solution is of the form (see Theorem \ref{theorem_F_nrc2})
\begin{eqnarray*}
\langle Z^F\rangle_{t} &=& (P_{g,h}^W)^{-1}(F(t)) \mathbf{1}_{\{0 < F(t) < 1\}} \text{ for } t \geq 0.
\end{eqnarray*} 
If we assume that the quadratic variation is absolutely continuous, we obtain that the explicit solution is of the form (see Theorem \ref{theorem_f_nrc2})
\begin{eqnarray*}
\sigma_{t,f}^2 = &\frac{f(t)}{f_{g,h}^W((P_{g,h}^W)^{-1}(F(t)))} \mathbf{1}_{\{0 < F(t) < 1\}} & \text{ almost everywhere for } t \geq 0.
\end{eqnarray*}

We describe now the main results of the inverse FPT problem in the one-sided (ii) case. If we define $F$ as the random survival cdf, we obtain that the explicit solution is of the form (see Theorem \ref{theorem_F_rc1})
\begin{eqnarray*}
\langle Y^F\rangle_{t} (\omega) &=& (P_1^W)^{-1}(F(t,\omega)) \mathbf{1}_{\{0 < F(t,\omega) < 1\}} \text{ for } t \geq 0 \text{ and } \omega \in \Omega.
\end{eqnarray*} 
If we assume that the random quadratic variation is absolutely continuous of the form $\langle Y^f\rangle_{t} (\omega) = \int_0^t \sigma_{s,f}^2(\omega) ds \text{ for } t \geq 0 \text{ and } \omega \in \Omega$ with random survival pdf $f$, we obtain that the explicit solution is of the form (see Theorem \ref{theorem_f_rc1})
\begin{eqnarray*}
\label{def_expsol_f_nrc1}
\sigma_{t,f}^2(\omega) = &\frac{f(t,\omega)}{f_1^W((P_1^W)^{-1}(F(t,\omega)))} \mathbf{1}_{\{0 < F(t,\omega) < 1\}} & \text{ almost everywhere for } t \geq 0 \text{ and } \omega \in \Omega.
\end{eqnarray*}

In the two-sided (ii) case, we obtain that the explicit solution is of the form (see Theorem \ref{theorem_F_rc2})
\begin{eqnarray*}
\langle Z^F\rangle_{t} (\omega)&=& (P_{g,h}^W)^{-1}(F(t,\omega)) \mathbf{1}_{\{0 < F(t,\omega) < 1\}} \text{ for } t \geq 0 \text{ and } \omega \in \Omega.
\end{eqnarray*} 
If we assume that the quadratic variation is absolutely continuous of the form $\langle Z^f\rangle_{t} (\omega) = \int_0^t \sigma_{s,f}^2(\omega) ds \text{ for } t \geq 0 \text{ and } \omega \in \Omega$, we obtain that the explicit solution is of the form (see Theorem \ref{theorem_f_rc2})
\begin{eqnarray*}
\sigma_{t,f}^2(\omega) = &\frac{f(t,\omega)}{f_{g,h}^W((P_{g,h}^W)^{-1}(F(t,\omega)))} \mathbf{1}_{\{0 < F(t,\omega) < 1\}} & \text{ almost everywhere for } t \geq 0 \text{ and } \omega \in \Omega.
\end{eqnarray*}

\subsection{Structure of this paper}
In what follows, we derive an explicit formula for the boundary crossing probability in Section \ref{section_expfor} and an explicit solution in Section \ref{section_application} in several cases. The proofs of the explicit formula are given in Section \ref{section_proofs_expfor}. The proofs of the inverse FPT problem can be found in Section \ref{section_proofs_application}. 
\section{Explicit formula}
\label{section_expfor}
In this section, we derive an explicit formula for the one-sided and two-sided boundary crossing probability (\ref{def_bcp})-(\ref{def_bcp2}) in the case (i) and (ii).
\subsection{One-sided (i) case}
In this part, we consider the case when the one-sided boundary is nonrandom constant, and the quadratic variation of the continuous local martingale is a time-varying function but not a stochastic process. 

We consider the complete stochastic basis $\mathcal{B} = (\Omega, \proba, \Sigma, \mathcal{F})$, where $\Sigma$ is a $\sigma$-field and $\mathcal{F} = (\mathcal{F}_t)_{t \in \reels^+}$ is a filtration. For $A \subset \reels^+$ and $B \subset \reels$ such that $0 \in A$, we define the set of nonrandom constant functions as $\mathcal{K}(A,B)$. We first give the definition of the set of boundary functions. 
\begin{definition}\label{defboundaryset_nrc1}
We define the set of nonrandom constant boundary functions as $\mathcal{G} = \mathcal{K} (\reels^+,\reels_*^+).$
\end{definition}
\noindent Since we consider constant boundary functions, we will abuse notation and identify $g \in \mathcal{G}$ with $g \in \reels_*^+$. We now give the definition of the FPT. 
\begin{definition}\label{defFPT_nrc1}
We define the FPT of an $\mathcal{F}$-adapted continuous stochastic process $(Z_t)_{t \in \reels^+}$ started at 0, i.e., with $Z_0=0$, to a nonrandom constant boundary
$g \in \mathcal{G}$ as
\begin{eqnarray}
\label{TgZdef_nrc1}
\Tau_g^Z = \inf \{t \in \reels^+ \text{ s.t. } Z_t \geq g\}.
\end{eqnarray}
\end{definition}
\noindent Since $Z$ is a continuous and 
$\mathcal{F}$-adapted stochastic process, and $\inf \{t \in \reels^+ \text{ s.t. } Z_t \geq g\} = \inf \{t \in \reels^+ \text{ s.t. } Z_t > g\} = \inf \{t \in \reels^+ \text{ s.t. } Z_t \in G\}$ where $G= \{(t,u) \in \reels^+ \times \reels \text{ s.t. } u > g\}$ is an open subset of $\reels^2$, the FPT $\Tau_{g}^{Z}$ is an $\mathcal{F}$-stopping time by Theorem I.1.28(a) (p. 7) in \cite{JacodLimit2003}. We can rewrite the boundary crossing probability $P_g^Z$ as the cumulative distribution function (cdf) of $\Tau_g^Z$, i.e., 
\begin{eqnarray}
\label{PgZdef_nrc1}
P_g^Z(t)= \proba (\Tau^Z_g \leq t) \text{ for any } t \geq 0.
\end{eqnarray}
If the cdf is absolutely continuous, we can also define its pdf $f_g^Z: \reels^+  \rightarrow  \reels^+$ as
\begin{eqnarray}
\label{fZgt_nrc1}
f_g^Z(t) & = & \frac{dP_g^Z(t)}{dt}.
\end{eqnarray}
We define an $\mathcal{F}$-adapted standard Wiener process as $(W_t)_{t \in \reels^+}$. We first consider the case when the stochastic process is a standard Wiener process, i.e., when $Z_t = W_t$ for any $t \in \reels^+$. The next lemma gives an explicit formula of $P_g^Z$ and $f_g^Z$, e.g., Levy distribution, which are known results by integrating the explicit formula conditioned on the final value of the Wiener process \cite{malmquist1954certain} (p. 526) with respect to the Wiener process final value as in \cite{wang1997boundary} (Equations (3), p. 55).
We define the standard normal cdf as 
\begin{eqnarray}
\label{def_standardgaussian}
\Phi (t)=\int_{-\infty}^{t} \frac{1}{\sqrt{2 \pi}} exp{\big(-\frac{u^2}{2}\big)}  du  \text{ for any } t \in \reels.
\end{eqnarray}
\begin{lemma}
\label{lemma_nrc1} We obtain a Levy distribution with $P_g^W (0)  =  0$, $f_g^W (0)  =  0$, 
\begin{eqnarray}
 \label{eqP_lemma_nrc1}
P_g^W (t) & = & 1 - \Phi \left(\frac{g}{\sqrt{t}}\right) + \Phi \left(\frac{-g}{\sqrt{t}}\right) \text{ for } t>0,\\
\label{eqf_lemma_nrc1} f_g^W (t) & = & \frac{g}{\sqrt{2 \pi t^3}} e^{\frac{-g^2}{2t}} \text{ for } t>0.
\end{eqnarray}
\end{lemma}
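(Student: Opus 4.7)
The plan is to invoke the classical reflection principle for Brownian motion, which provides the cleanest route to the Levy distribution. For $g \in \reels_*^+$ and any $t > 0$, the reflection principle yields $\proba(\sup_{0 \leq s \leq t} W_s \geq g) = 2\proba(W_t \geq g)$. This identity follows from the strong Markov property applied at the stopping time $\Tau_g^W$ from \cref{defFPT_nrc1}: conditional on $\{\Tau_g^W \leq t\}$, the post-hitting path $(W_{\Tau_g^W + s} - g)_{s \geq 0}$ is a Wiener process independent of $\mathcal{F}_{\Tau_g^W}$ and symmetric about zero, so $\{\Tau_g^W \leq t,\, W_t \geq g\}$ and $\{\Tau_g^W \leq t,\, W_t < g\}$ have equal probability, and the former coincides with $\{W_t \geq g\}$ by continuity of $W$.

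Using $W_t \sim \mathcal{N}(0,t)$, I would then write $\proba(W_t \geq g) = 1 - \Phi(g/\sqrt{t})$ and rearrange via the standard-normal symmetry $1 - \Phi(x) = \Phi(-x)$ to match the form of equation (\ref{eqP_lemma_nrc1}). Differentiation in $t$ yields the density: by the chain rule both $-\Phi(g/\sqrt{t})$ and $\Phi(-g/\sqrt{t})$ contribute $g\,\phi(g/\sqrt{t})/(2t^{3/2})$, where $\phi$ denotes the standard normal density, since $\phi$ is even. Adding the two contributions and substituting $\phi(g/\sqrt{t}) = (2\pi)^{-1/2}\exp(-g^2/(2t))$ produces equation (\ref{eqf_lemma_nrc1}) directly.

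The boundary values at $t=0$ are immediate. For $P_g^W (0)=0$, the supremum over the degenerate interval $[0,0]$ equals $W_0=0<g$. For $f_g^W (0)=0$, the limit as $t \to 0^+$ of the closed-form density vanishes because $\exp(-g^2/(2t))$ decays faster than any polynomial in $t^{-1}$, so the product with $t^{-3/2}$ tends to zero, and the formula extends continuously by setting $f_g^W(0)=0$.

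No step presents a real obstacle: the only delicate point is the rigorous justification of the reflection principle, which is entirely standard (e.g.\ via the strong Markov property, or via random-walk reflection followed by a Donsker-type limit), while the remaining steps are elementary calculus with the Gaussian density. An alternative the paper alludes to, integrating the formula of \cite{malmquist1954certain} conditioned on $W_t$ with respect to the Gaussian law of $W_t$, would reach the same identities but is computationally more cumbersome.
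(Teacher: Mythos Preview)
Your proof is correct and takes a genuinely different route from the paper. The paper proceeds by first quoting the conditional boundary-crossing probability of \cite{malmquist1954certain} given the terminal value $W_T=x$, namely $\exp(-2g(g-x)/T)\mathbf{1}_{\{x\le g\}}+\mathbf{1}_{\{x>g\}}$, and then citing \cite{wang1997boundary} for the integration of this expression against the Gaussian law of $W_T$ to obtain \cref{eqP_lemma_nrc1}; the density \cref{eqf_lemma_nrc1} is then derived by differentiation exactly as you do. Your argument bypasses both citations by invoking the reflection principle directly, which is more elementary and self-contained for this particular statement. The paper's route, on the other hand, has the advantage that the Malmquist conditional formula is itself a reusable ingredient (it underlies, for instance, the piecewise-linear boundary approximations of \cite{wang1997boundary} referenced elsewhere in the paper), so the detour is natural in context even if heavier for proving \cref{lemma_nrc1} in isolation. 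You correctly anticipated this alternative in your final paragraph.
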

\noindent The main result of this paper in the one-sided (i) case, i.e., Theorem \ref{theorem_nrc1}, states that the boundary crossing probability of a continuous local martingale to a constant boundary is equal to the boundary crossing probability of a standard Wiener process to a constant boundary up to a time change of quadratic variation value. It relies on the constancy of the boundary and Dambis, Dubins-Schwarz theorem for continuous local martingale (Th. V.1.6 in \cite{revuz2013continuous}).  Accordingly, we provide the assumption on the continuous local martingale which is required to apply Dambis, Dubins-Schwarz theorem.
\begin{assumption}
\label{assDDS_nrc1} We assume that $Z$ is a continuous $\mathcal{F}$-adapted local martingale with nonrandom quadratic variation $\langle Z\rangle$ and such that $Z_0=0$ and $\langle Z\rangle_{\infty} = \infty$.
\end{assumption}
\noindent For any function $h:  \reels^+ \rightarrow  \reels^+$, $a \mapsto h(a)$, and any $A \subset \reels^+$, we define the restriction of $h$ to $A$ as $h \restriction_A$ such that $h \restriction_A:  A \rightarrow  \reels^+$, $ a \mapsto h(a)$. For  $p \in \reels$, $p \geq 1$, we define the set of p-integrable non random functions and the set of locally p-integrable non random functions as respectively
\begin{eqnarray*}
L_{p}(A) & =& \bigl\{h:A\to\reels^+ \text{ measurable s.t. }\,\int_A | h(x)|^p \,\mathrm{d}x <+\infty\bigr\},\\
L_{p,\mathrm{loc}}(A) & =& \bigl\{h:A\to\reels^+ \text{ measurable s.t. }\,h \restriction_K \in L_p(K)\ \forall\, K \subset A,\, K \text{ compact}\bigr\},
\end{eqnarray*}
$\text{ for } A \subset \reels^+$ measurable.
\begin{example}
\label{example_ito_nrc1}
We can consider a continuous It\^{o} process with no drift of the form 
\begin{eqnarray}
\label{Xdef}
Z_t = \int_0^t \sigma_s dW_s \text{ for } t \geq 0,
\end{eqnarray}
where $\sigma : \reels^+ \rightarrow \reels $ is a nonrandom function. If we assume that $\sigma \in L_{2,\mathrm{loc}}(\reels^{+})$, then $Z$ is a local martingale with nonrandom quadratic variation $\langle Z\rangle_{t} = \int_0^t \sigma_u^2 du$ by Theorem I.4.40 (p. 48) in \cite{JacodLimit2003}. If we further assume that $\int_0^t \sigma_u^2 du \rightarrow \infty$ as $t \rightarrow \infty$, we have that $Z$ satisfies Assumption \ref{assDDS_nrc1}.
\end{example}
\noindent We introduce Proposition \ref{lemmaDDS_nrc1} in what follows. The main idea to prove it is the scale invariant property of the time-changed Wiener process and thus the scale invariant property of the FPT. More specifically, we can apply by Assumption \ref{assDDS_nrc1} the Dambis, Dubins-Schwarz theorem for continuous local martingale. We define the generalized inverse function of the non-decreasing function $\langle Z\rangle$ for any $t \geq0$ as 
$$\langle Z\rangle_{t}^{-1}=\inf\{s \geq 0 \text{ s.t. }\langle Z\rangle_{s}>t\}.$$ 
We also define the canonical filtration of any stochastic process $Z$ as $\mathcal{F}_t^Z = \sigma \big(Z(C), C \in \mathcal{B}(\reels^+), C \subset [0,t] \big)$, where $\mathcal{B}(\reels^+)$ is the usual Borel $\sigma$-field generated by the open sets of $\reels^+$. Then $(B_{t})_{t\geq 0}=(Z_{\langle Z\rangle_{t}^{-1}})_{t\geq 0}$ is a $({\mathcal {F}}_t^{\langle Z\rangle^{-1}})_{t\geq 0}$-Wiener process and 
\begin{eqnarray}
\label{proof230110}
Z_{t}=B_{\langle Z\rangle_{t}} \text{ for } t \geq 0.
\end{eqnarray}
\begin{proposition}
\label{lemmaDDS_nrc1}
Under Assumption \ref{assDDS_nrc1}, we have that
\begin{eqnarray}
\label{eq_lemmaDDS_nrc1}
\big\{ \Tau_g^Z =t \big\} & = & \big\{ \Tau_g^B = \langle Z\rangle_{t} \big\} \text{ for } t \geq 0.
\end{eqnarray}
\end{proposition}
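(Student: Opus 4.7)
The plan is to apply the Dambis, Dubins-Schwarz theorem to represent $Z$ as a deterministic time-change of a standard Wiener process $B$, and then translate the first-passage event of $Z$ into the corresponding one for $B$ via this time change. This is the embodiment of the scale-invariance idea stressed by the author. Concretely, Assumption \ref{assDDS_nrc1} permits the application of Dambis, Dubins-Schwarz, yielding the Wiener process $B$ with $Z_{t}=B_{\langle Z\rangle_{t}}$ for all $t\geq 0$ already recalled in Eq.~\eqref{proof230110}. Since $Z$ is a continuous local martingale, $\langle Z\rangle$ is continuous; combined with its being non-decreasing, with $\langle Z\rangle_{0}=0$ and $\langle Z\rangle_{\infty}=\infty$, the range of $\langle Z\rangle$ is all of $[0,\infty)$, so by the intermediate value theorem every $u\geq 0$ can be written as $\langle Z\rangle_{s}$ for some $s\geq 0$.

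For the forward inclusion $\{\Tau_{g}^{Z}=t\}\subseteq\{\Tau_{g}^{B}=\langle Z\rangle_{t}\}$, I would fix $\omega$ with $\Tau_{g}^{Z}(\omega)=t$. Continuity of $Z$ together with $Z_{0}=0<g$ force $Z_{t}(\omega)=g$, whence $B_{\langle Z\rangle_{t}}(\omega)=g$ and $\Tau_{g}^{B}(\omega)\leq\langle Z\rangle_{t}$. For any $u<\langle Z\rangle_{t}$, one picks $s$ with $\langle Z\rangle_{s}=u$; then $\langle Z\rangle_{s}<\langle Z\rangle_{t}$ forces $s<t$, and the defining property of $\Tau_{g}^{Z}(\omega)$ gives $B_{u}(\omega)=Z_{s}(\omega)<g$, hence $\Tau_{g}^{B}(\omega)\geq\langle Z\rangle_{t}$. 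The reverse inclusion would proceed symmetrically: if $\Tau_{g}^{B}(\omega)=\langle Z\rangle_{t}$, then $B_{\langle Z\rangle_{t}}(\omega)=g$ gives $Z_{t}(\omega)=g$ and $\Tau_{g}^{Z}(\omega)\leq t$, while for any $s<t$ with $\langle Z\rangle_{s}<\langle Z\rangle_{t}$ the minimality of $\Tau_{g}^{B}(\omega)$ yields $Z_{s}(\omega)=B_{\langle Z\rangle_{s}}(\omega)<g$.

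The main obstacle I anticipate is the possibility that $\langle Z\rangle$ is flat on an interval $[t_{0},t]$ with $t_{0}<t$ at the value $\langle Z\rangle_{t}$: on such a flat part $Z$ is constant, so $Z_{s}(\omega)=g$ throughout, and the reverse inclusion as stated can fail pointwise because then $\Tau_{g}^{Z}(\omega)=t_{0}<t$. To handle this, I would observe that the set of flat values of the \emph{deterministic} function $\langle Z\rangle$ is at most countable, while $\Tau_{g}^{B}$ admits the continuous L\'evy density displayed in Eq.~\eqref{eqf_lemma_nrc1}; consequently the exceptional event has $\proba$-measure zero, and the set equality \eqref{eq_lemmaDDS_nrc1} holds up to a $\proba$-null set, which is exactly what is needed for its probabilistic use in Theorem \ref{theorem_nrc1}.
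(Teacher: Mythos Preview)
Your approach is essentially the same as the paper's: both invoke the Dambis--Dubins--Schwarz representation $Z_t=B_{\langle Z\rangle_t}$ and then transfer the first-passage event through the time change $s\mapsto\langle Z\rangle_s$. The paper carries this out tersely, simply asserting the key step
\[
\big\{\inf\{s\geq 0:B_{\langle Z\rangle_s}\geq g\}=t\big\}=\big\{\inf\{s\geq 0:B_s\geq g\}=\langle Z\rangle_t\big\}
\]
as a ``time change of quadratic variation value'' without further justification, whereas you unpack the argument pointwise via the two inclusions.

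Your extra care actually surfaces a genuine subtlety that the paper's proof glosses over: when $\langle Z\rangle$ is constant on an interval $[t_0,t_1]$ with $t_0<t$, the reverse inclusion can fail at $t$, since then $\{\Tau_g^Z=t\}=\emptyset$ while $\{\Tau_g^B=\langle Z\rangle_t\}$ need not be empty. Your fix---that the deterministic function $\langle Z\rangle$ has at most countably many flat \emph{values}, and that $\Tau_g^B$ has a density by \eqref{eqf_lemma_nrc1}, so the exceptional event is $\proba$-null---is correct and is precisely what is needed for the application in Theorem~\ref{theorem_nrc1}, where only $\proba(\Tau_g^Z\leq t)$ matters. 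The paper implicitly relies on the same fact but does not articulate it. In short: your proof is correct, follows the same route as the paper, and is more rigorous on this point.
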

\noindent In what follows, we state the main result of this paper in the one-sided (i) case. The proof is mainly based on Proposition \ref{lemmaDDS_nrc1}.
\begin{theorem}
\label{theorem_nrc1}
Under Assumption \ref{assDDS_nrc1}, we have that
\begin{eqnarray}
\label{eq_theorem_nrc1}
P_g^Z (t) & = & P_g^W \big(\langle Z\rangle_{t}\big) \text{ for } t \geq 0.
\end{eqnarray}
\end{theorem}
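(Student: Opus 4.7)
The plan is to reduce $P_g^Z(t)$ to the distribution function of the first-passage time $\Tau_g^Z$, transport the event $\{\Tau_g^Z \leq t\}$ to an event involving the Dambis--Dubins--Schwarz Wiener process $B$ via Proposition \ref{lemmaDDS_nrc1}, and then use that $B$ has the law of a standard Wiener process so its first-passage distribution coincides with $P_g^W$.

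First, by (\ref{PgZdef_nrc1}) we have $P_g^Z(t) = \proba(\Tau_g^Z \leq t)$. Decomposing the event as a union indexed by the hitting time, one writes $\{\Tau_g^Z \leq t\} = \bigcup_{s \in [0,t]} \{\Tau_g^Z = s\}$ and applies Proposition \ref{lemmaDDS_nrc1} pointwise to obtain $\{\Tau_g^Z \leq t\} = \bigcup_{s \in [0,t]} \{\Tau_g^B = \langle Z\rangle_s\}$.

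The intermediate, and only nontrivial, step is to identify this union with $\{\Tau_g^B \leq \langle Z\rangle_t\}$. Under Assumption \ref{assDDS_nrc1} the quadratic variation $\langle Z\rangle$ is continuous and nondecreasing with $\langle Z\rangle_0 = 0$, so by the intermediate value theorem the image of $[0,t]$ under $\langle Z\rangle$ is exactly the interval $[0,\langle Z\rangle_t]$. Consequently $\bigcup_{s \in [0,t]} \{\Tau_g^B = \langle Z\rangle_s\} = \{\Tau_g^B \in [0,\langle Z\rangle_t]\} = \{\Tau_g^B \leq \langle Z\rangle_t\}$.

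Taking probabilities then yields $P_g^Z(t) = \proba(\Tau_g^B \leq \langle Z\rangle_t)$. Since $\langle Z\rangle_t$ is a deterministic real number under Assumption \ref{assDDS_nrc1} and $B$ is a standard Wiener process by the Dambis--Dubins--Schwarz theorem, the first-passage distribution of $B$ coincides with that of $W$, so $\proba(\Tau_g^B \leq \langle Z\rangle_t) = P_g^W(\langle Z\rangle_t)$, which is (\ref{eq_theorem_nrc1}). The core of the argument is packaged inside Proposition \ref{lemmaDDS_nrc1}; given that, the only remaining technicality is the continuity-and-monotonicity argument that promotes the pointwise event identity to the inequality identity, and the observation that $\langle Z\rangle_t$ being nonrandom is what lets us evaluate $P_g^W$ at a genuine scalar.
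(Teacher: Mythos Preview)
Your proof is correct and follows essentially the same route as the paper: reduce to the cdf of $\Tau_g^Z$, invoke Proposition~\ref{lemmaDDS_nrc1} and the Dambis--Dubins--Schwarz representation to pass to $\Tau_g^B$, and then identify the law of $B$ with that of $W$ at the deterministic time $\langle Z\rangle_t$. The one place where you are actually more careful than the paper is the passage from the pointwise identity $\{\Tau_g^Z=s\}=\{\Tau_g^B=\langle Z\rangle_s\}$ to the inequality $\{\Tau_g^Z\le t\}=\{\Tau_g^B\le\langle Z\rangle_t\}$: the paper simply cites Proposition~\ref{lemmaDDS_nrc1} for this step, whereas you spell out the intermediate-value argument that $s\mapsto\langle Z\rangle_s$ maps $[0,t]$ onto $[0,\langle Z\rangle_t]$, which is what makes the union collapse.
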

\noindent As a corollary, we obtain the pdf from the FPT of a continuous local martingale to a constant boundary if we assume that the quadratic variation is absolutely continuous.
\begin{corollary}
\label{corollary_nrc1}
Under Assumption \ref{assDDS_nrc1} and if we assume that the quadratic variation $\langle Z\rangle$ is absolutely continuous on $\reels^+$, we have that
\begin{eqnarray}
\label{eq_corollary_nrc1}
f_g^X (t) & = & \langle Z\rangle_t ' f_g^W \big(\langle Z\rangle_t\big) \text{ for } t \geq 0.
\end{eqnarray}
\end{corollary}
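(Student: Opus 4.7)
The approach is to differentiate the identity from Theorem \ref{theorem_nrc1} in $t$ and apply the chain rule. More precisely, under Assumption \ref{assDDS_nrc1}, Theorem \ref{theorem_nrc1} gives
$$P_g^Z(t) \;=\; P_g^W\bigl(\langle Z\rangle_t\bigr) \quad \text{for } t \geq 0.$$
By definition (\ref{fZgt_nrc1}), $f_g^Z(t)$ is the derivative of the left-hand side, so the task reduces to computing the derivative of the composition on the right and showing it equals $\langle Z\rangle'_t\, f_g^W(\langle Z\rangle_t)$.

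The first step is to invoke Lemma \ref{lemma_nrc1}: the explicit Levy formula (\ref{eqP_lemma_nrc1}) shows that $P_g^W$ is $C^\infty$ on $(0,\infty)$ with derivative $f_g^W$ given by (\ref{eqf_lemma_nrc1}), and one checks directly that $f_g^W$ is bounded on $\reels^+$ (its maximum is attained at $t = g^2/3$). In particular, $P_g^W$ is globally Lipschitz on $\reels^+$. The second step uses the hypothesis that $\langle Z\rangle$ is absolutely continuous: this yields the existence of the density $\langle Z\rangle'_t$ almost everywhere, and combined with the Lipschitz property of $P_g^W$, it ensures that the composition $t\mapsto P_g^W(\langle Z\rangle_t)$ is itself absolutely continuous.

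The third step is the chain rule. Since $P_g^W$ is differentiable on $\reels^+$ (with $(P_g^W)'(0)=f_g^W(0)=0$) and $\langle Z\rangle$ is differentiable at almost every $t$, the classical chain rule for absolutely continuous compositions gives
$$\frac{d}{dt}\,P_g^W\bigl(\langle Z\rangle_t\bigr) \;=\; f_g^W\bigl(\langle Z\rangle_t\bigr)\,\langle Z\rangle'_t$$
at almost every $t \geq 0$. Combining this with (\ref{fZgt_nrc1}) and Theorem \ref{theorem_nrc1} yields the claimed identity (\ref{eq_corollary_nrc1}).

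The main obstacle is the careful justification of the chain rule, since for general absolutely continuous functions the naive chain rule can fail at points where the inner function is constant on sets of positive measure in the image. Here this difficulty is handled cleanly by the Lipschitz property of $P_g^W$ (which rules out any pathology on flat regions of $\langle Z\rangle$) together with the monotonicity of $\langle Z\rangle$ inherited from its role as a quadratic variation. A minor additional remark: at $t=0$ one has $\langle Z\rangle_0 = 0$ and $f_g^W(0)=0$, so the boundary value is consistent with the stated formula.
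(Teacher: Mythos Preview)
Your proof is correct and follows essentially the same route as the paper: differentiate the identity $P_g^Z(t)=P_g^W(\langle Z\rangle_t)$ from Theorem~\ref{theorem_nrc1} and invoke the chain rule together with the absolute continuity of $\langle Z\rangle$. The paper's argument is a three-line computation that simply cites ``the fundamental theorem of calculus along with chain rule and the assumption that the quadratic variation $\langle Z\rangle$ is absolutely continuous''; your version supplies the extra justification (Lipschitz property of $P_g^W$, hence absolute continuity of the composition) that the paper leaves implicit.
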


\subsection{Two-sided (i) case}
In this part, we consider the case when the two-sided boundary is nonrandom constant, and the quadratic variation of the continuous local martingale is a time-varying function but not a stochastic process. 

We first give the definition of the set of two-sided boundary functions. 
\begin{definition}\label{defboundaryset_nrc2}
We define the set of nonrandom constant two-sided boundary functions as $\mathcal{H} = \mathcal{K} (\reels^+,\reels_*^+) \times \mathcal{K} (\reels^+,\reels_*^-).$
\end{definition} 
\begin{definition}\label{defFPT_nrc2}
We define the FPT of an $\mathcal{F}$-adapted continuous stochastic process $(Z_t)_{t \in \reels^+}$ started at 0, i.e., with $Z_0=0$, to a nonrandom constant two-sided boundary
$(g,h) \in \mathcal{H}$ as
\begin{eqnarray}
\label{TgZdef_nrc2}
\Tau_{g,h}^Z = \inf \{t \in \reels^+ \text{ s.t. } Z_t \geq g \text{ or } Z_t \leq h \}.
\end{eqnarray}
\end{definition}
\noindent Since $Z$ is a continuous and 
$\mathcal{F}$-adapted stochastic process and $\inf \{t \in \reels^+ \text{ s.t. } Z_t \geq g \text{ or } Z_t \leq h \} = \inf \{t \in \reels^+ \text{ s.t. } Z_t > g \text{ or } Z_t < h \} = \inf \{t \in \reels^+ \text{ s.t. } Z_t \in G\}$ where $G= \{(t,u) \in \reels^+ \times \reels \text{ s.t. } u > g \text{ or } u < h \}$ is an open subset of $\reels^2$, the FPT $\Tau_{g,h}^{Z}$ is an $\mathcal{F}$-stopping time by Theorem I.1.28(a) (p. 7) in \cite{JacodLimit2003}. We can rewrite the boundary crossing probability $P_{g,h}^Z$ as the cdf of $\Tau_{g,h}^Z$, i.e., 
\begin{eqnarray}
\label{PgZdef_nrc2}
P_{g,h}^Z(t)= \proba (\Tau^Z_{g,h} \leq t) \text{ for any } t \geq 0.
\end{eqnarray}
If the cdf is absolutely continuous, we can also define its pdf $f_{g,h}^Z: \reels^+  \rightarrow  \reels^+$ as
\begin{eqnarray}
\label{fZgt_nrc2}
f_{g,h}^Z(t) & = & \frac{dP_{g,h}^Z(t)}{dt}.
\end{eqnarray}
We first consider the case when the stochastic process is a standard Wiener process, i.e., when $Z_t = W_t$ for any $t \in \reels^+$. The next lemma gives an explicit formula of $P_{g,h}^Z$ and $f_{g,h}^Z$ which are respectively known results from Theorem 4.3 (p. 180) and Theorem 5.1 (p. 191) in \cite{anderson1960modification}.
We define $\mathrm{ss}_{t}(v, w)$ as
\begin{eqnarray}
\label{sst}
 \displaystyle \mathrm{ss}_{t}(v, w)=\sum_{k=-\infty}^{\infty} \frac{w-v+2 k w}{\sqrt{2 \pi} t^{3 / 2}} e^{-(w-v+2 k w)^{2} / 2 t} \text{ for any } 0<v<w .  
\end{eqnarray}

\begin{lemma}
\label{lemma_nrc2} We obtain that $P_{g,h}^W (0)  =  0$, $f_{g,h}^W (0)  =  0$, 
\begin{eqnarray}
\label{eqP_lemma_nrc2} P_{g,h}^W (t) & = &\sum_{k=-\infty}^{\infty}{\left(4-2\phi\left(\frac{-h+2k(g-h)}{\sqrt{t}}\right)-2\phi\left(\frac{g+2k(g-h)}{\sqrt{t}}\right)\right)},\\
\label{eqf_lemma_nrc2} f_{g,h}^W (t) & = & \operatorname{ss}_t\left(g,g-h\right)+\operatorname{ss}_t\left(-h,g-h\right).
\end{eqnarray}
\end{lemma}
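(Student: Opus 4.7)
The plan is to treat the two parts separately. The boundary-value statements $P_{g,h}^W(0) = 0$ and $f_{g,h}^W(0) = 0$ are immediate from the continuity of $W$: since $W_0 = 0$ and $h < 0 < g$, we have $\Tau_{g,h}^W > 0$ almost surely, so $P_{g,h}^W(0) = \proba(\Tau_{g,h}^W \le 0) = 0$, and the absolute continuity of the exit-time law on $(0,\infty)$ yields $f_{g,h}^W(0) = 0$.

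For the main formula (\ref{eqP_lemma_nrc2}), I would use the classical method of images for the heat equation on the strip $(h,g)$ with absorbing boundaries. The transition density of $W$ killed upon exiting $(h,g)$ can be written as
\begin{equation*}
q_t(0,y) = \sum_{k=-\infty}^{\infty}\bigl[\varphi_t(y + 2k(g-h)) - \varphi_t(2g - y + 2k(g-h))\bigr],
\end{equation*}
where $\varphi_t$ is the $N(0,t)$ density; this sum satisfies the heat equation and vanishes at both $y = g$ and $y = h$ by the antisymmetric pairing of images across each boundary. Integrating $q_t(0,y)$ over $y \in (h,g)$ gives $\proba(\Tau_{g,h}^W > t) = 1 - P_{g,h}^W(t)$. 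After expressing the integrated Gaussian densities in terms of $\Phi$, shifting indices, and collecting the contributions from the two series (one from each image family), the expression rearranges into (\ref{eqP_lemma_nrc2}). Since this is a standard calculation, one may alternatively invoke Theorem~4.3 of \cite{anderson1960modification} directly.

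For the density (\ref{eqf_lemma_nrc2}), the cleanest route is to decompose $\Tau_{g,h}^W$ according to which boundary is hit first and compute each density as the probability flux of the killed density across that boundary, namely $\tfrac{1}{2}|\partial_y q_t(0,y)|$ evaluated at $y = g$ and $y = h$ respectively. A direct computation of these two fluxes produces exactly the two series $\operatorname{ss}_t(g, g-h)$ and $\operatorname{ss}_t(-h, g-h)$ by matching terms in the image expansion. Equivalently, differentiating (\ref{eqP_lemma_nrc2}) in $t$ term-by-term — justified by the rapid decay of the Gaussian tails in the sum — reproduces the same two series, and this is Theorem~5.1 of \cite{anderson1960modification}.

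The main obstacle is the bookkeeping for the image system: one must verify that a single image array with spacing $2(g-h)$ and alternating signs simultaneously zeros the density at both $y = g$ and $y = h$, and that the resulting series converges uniformly in $y$ on compact subsets, so that both the integration in $y$ and the differentiation in $t$ can be performed termwise. The convergence issues are handled by the super-exponential decay of $\varphi_t$ in $k$, so the step is technical but not delicate; once the image expansion is in place, the matching with (\ref{eqP_lemma_nrc2}) and (\ref{eqf_lemma_nrc2}) is just a rearrangement.
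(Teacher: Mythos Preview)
Your proposal is correct and rests on the same classical results from \cite{anderson1960modification} as the paper. The only difference is the order of derivation: the paper takes the density formula (\ref{eqf_lemma_nrc2}) directly from Anderson's Theorem~5.1 and then \emph{integrates} $\operatorname{ss}_t$ termwise to obtain the cdf (\ref{eqP_lemma_nrc2}), whereas you propose to establish the cdf first (via the image expansion or Anderson's Theorem~4.3) and then \emph{differentiate} to recover the density. Both directions are valid and the justification for termwise operations is the same Gaussian tail decay you identify; the paper's route is marginally shorter because the antiderivative of each term in $\operatorname{ss}_t$ is a single $\Phi$ evaluation, so no rearrangement of the image series is needed.
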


\noindent The main result of this paper adapts to the two-sided (i) case since the arguments of boundary constancy and Dambis, Dubins-Schwarz theorem still hold. More specifically, we introduce Proposition \ref{lemmaDDS_nrc1} in what follows. We can apply by Assumption \ref{assDDS_nrc1} the Dambis, Dubins-Schwarz theorem for continuous local martingale. We define the generalized inverse function of the non-decreasing function $\langle Z\rangle$ for any $t \geq0$ as 
$$\langle Z\rangle_{t}^{-1}=\inf\{s \geq 0 \text{ s.t. }\langle Z\rangle_{s}>t\}.$$ 
Then $(B_{t})_{t\geq 0}=(Z_{\langle Z\rangle_{t}^{-1}})_{t\geq 0}$ is a $({\mathcal {F}}_t^{\langle Z\rangle^{-1}})_{t\geq 0}$-Wiener process and 
\begin{eqnarray}
\label{proof230110_nrc2}
Z_{t}=B_{\langle Z\rangle_{t}} \text{ for } t \geq 0.
\end{eqnarray}
\begin{proposition}
\label{lemmaDDS_nrc2}
Under Assumption \ref{assDDS_nrc1}, we have that
\begin{eqnarray}
\label{eq_lemmaDDS_nrc2}
\big\{ \Tau_{g,h}^Z =t \big\} & = & \big\{ \Tau_{g,h}^B = \langle Z\rangle_{t} \big\} \text{ for } t \geq 0.
\end{eqnarray}
\end{proposition}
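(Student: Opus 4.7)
The plan is to mirror the argument used for Proposition~\ref{lemmaDDS_nrc1} in the one-sided case, since all of its ingredients remain available in the two-sided setting. By Assumption~\ref{assDDS_nrc1}, the Dambis, Dubins--Schwarz theorem produces the Wiener process $B$ with $Z_{s}=B_{\langle Z\rangle_{s}}$ for every $s\ge 0$ recorded in Eq.~(\ref{proof230110_nrc2}), and $\langle Z\rangle$ is a deterministic, continuous, non-decreasing function with $\langle Z\rangle_{0}=0$ and $\langle Z\rangle_{\infty}=\infty$. Because the two-sided boundary $(g,h)$ is constant in time, continuity of $Z$ gives the clean path-space characterisation
\[
\{\Tau_{g,h}^{Z}=t\}=\{Z_{t}\in\{g,h\}\}\cap\{h<Z_{s}<g\ \text{for all}\ s\in[0,t)\},
\]
and the analogous one for $B$ with $t$ replaced by $\langle Z\rangle_{t}$. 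Substituting $Z_{s}=B_{\langle Z\rangle_{s}}$ in these descriptions is precisely what converts one event into the other.

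I would then verify both inclusions. For $\subseteq$, on $\{\Tau_{g,h}^{Z}=t\}$ the continuity of $Z$ forces $Z_{t}=B_{\langle Z\rangle_{t}}\in\{g,h\}$ and $B_{\langle Z\rangle_{s}}=Z_{s}\in(h,g)$ for every $s<t$. Because $\langle Z\rangle$ is continuous and non-decreasing, the image of $[0,t]$ under $\langle Z\rangle$ is the full interval $[0,\langle Z\rangle_{t}]$, so for any $u<\langle Z\rangle_{t}$ the intermediate value theorem supplies an $s<t$ with $\langle Z\rangle_{s}=u$, yielding $B_{u}\in(h,g)$. Together with $B_{\langle Z\rangle_{t}}\in\{g,h\}$ this gives $\Tau_{g,h}^{B}=\langle Z\rangle_{t}$. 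The reverse inclusion is symmetric: starting from $\Tau_{g,h}^{B}=\langle Z\rangle_{t}$, plugging $u=\langle Z\rangle_{s}$ into the $B$-characterisation and applying the DDS identity recovers the $Z$-characterisation.

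The main obstacle I expect is the possibility that $\langle Z\rangle$ is flat on some interval $[a,b]$: by the DDS identity $Z$ is then itself constant on $[a,b]$ with common value $B_{\langle Z\rangle_{a}}$, so if that value lies in $\{g,h\}$ then $\Tau_{g,h}^{Z}=a$ while $\langle Z\rangle_{t}$ is the same for every $t\in[a,b]$, which creates a genuine mismatch for $t\in(a,b]$. I would handle this by observing that $\langle Z\rangle$ admits at most countably many maximal flat intervals, and by selecting the leftmost preimage of $\langle Z\rangle_{t}$ in the reverse inclusion; this exceptional set has Lebesgue measure zero in $t$, so it does not affect the cdf identity $P_{g,h}^{Z}(t)=P_{g,h}^{W}(\langle Z\rangle_{t})$ that is the intended use of the proposition in Theorem~\ref{theorem_nrc2}.
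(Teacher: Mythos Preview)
Your strategy coincides with the paper's: substitute $Z_s=B_{\langle Z\rangle_s}$ from the Dambis--Dubins--Schwarz representation into the definition of $\Tau_{g,h}^Z$ and then change time via $\langle Z\rangle$. The paper carries this out in three displayed lines and simply asserts the time-change identity without further justification, whereas you supply the missing pathwise argument by proving both inclusions and invoking the intermediate value theorem for the surjectivity of $s\mapsto\langle Z\rangle_s$ onto $[0,\langle Z\rangle_t]$.

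You are right to flag the flat-interval obstruction; the paper's proof does not address it at all, and the event equality as stated can indeed fail for $t$ strictly inside a plateau of $\langle Z\rangle$. One correction to your handling: the exceptional set of $t$ is \emph{not} of Lebesgue measure zero---a single flat stretch $[a,b]$ with $b>a$ already contributes the positive-length set $(a,b]$. The clean fix is to note that your surjectivity argument gives the \emph{inequality} version $\{\Tau_{g,h}^Z\le t\}=\{\Tau_{g,h}^B\le\langle Z\rangle_t\}$ for every $t\ge0$ without exception (existence of $s\le t$ with $B_{\langle Z\rangle_s}\notin(h,g)$ is equivalent to existence of $u\le\langle Z\rangle_t$ with $B_u\notin(h,g)$), and that is exactly what the proof of Theorem~\ref{theorem_nrc2} actually uses.
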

\noindent In what follows, we state the main result of this paper in the two-sided (i) case.
\begin{theorem}
\label{theorem_nrc2}
Under Assumption \ref{assDDS_nrc1}, we have that
\begin{eqnarray}
\label{eq_theorem_nrc2}
P_{g,h}^Z (t) & = & P_{g,h}^W \big(\langle Z\rangle_{t}\big) \text{ for } t \geq 0.
\end{eqnarray}
\end{theorem}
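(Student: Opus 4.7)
The proof plan mirrors that of Theorem \ref{theorem_nrc1}, with Proposition \ref{lemmaDDS_nrc2} playing the role of Proposition \ref{lemmaDDS_nrc1}. Using the cdf characterisation \eqref{PgZdef_nrc2} I would first write $P_{g,h}^Z(t)=\proba(\Tau_{g,h}^Z\leq t)$, and then promote the pointwise identity of Proposition \ref{lemmaDDS_nrc2}, $\{\Tau_{g,h}^Z=s\}=\{\Tau_{g,h}^B=\langle Z\rangle_s\}$, to the cumulative identity
\begin{equation*}
\{\Tau_{g,h}^Z\leq t\}=\{\Tau_{g,h}^B\leq \langle Z\rangle_t\}.
\end{equation*}
The inclusion $\subseteq$ is immediate from monotonicity of $\langle Z\rangle$: if $\Tau_{g,h}^Z=s\leq t$, then Proposition \ref{lemmaDDS_nrc2} at $s$ yields $\Tau_{g,h}^B=\langle Z\rangle_s\leq \langle Z\rangle_t$.

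For the converse inclusion $\supseteq$ I would argue by contradiction. Suppose $\Tau_{g,h}^B\leq\langle Z\rangle_t$ yet $s:=\Tau_{g,h}^Z>t$. Applying Proposition \ref{lemmaDDS_nrc2} at $s$ gives $\langle Z\rangle_s=\Tau_{g,h}^B\leq\langle Z\rangle_t$, so the non-decreasing map $\langle Z\rangle$ is constant on $[t,s]$. Since a continuous local martingale is almost surely constant on any interval on which its quadratic variation is constant, $Z$ agrees with $Z_s$ on $[t,s]$. But at the first-passage time $s$ the path of $Z$ lies on or beyond one of the two boundaries, so $Z_t=Z_s$ does too, contradicting the definition of $s$ as $\inf\{u\geq 0:Z_u\geq g \text{ or } Z_u\leq h\}$ together with $s>t$. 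This forces $s\leq t$ and establishes the event identity.

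Once the event identity is in place, I would take probabilities: since $B$ is a standard Wiener process by \eqref{proof230110_nrc2} and $\langle Z\rangle_t$ is deterministic under Assumption \ref{assDDS_nrc1}, I obtain $P_{g,h}^Z(t)=\proba(\Tau_{g,h}^B\leq\langle Z\rangle_t)=P_{g,h}^W(\langle Z\rangle_t)$, which is exactly \eqref{eq_theorem_nrc2}. The most delicate step is the constancy argument in the converse inclusion, where one has to combine the event-level output of Proposition \ref{lemmaDDS_nrc2} with the fact that flat intervals of $\langle Z\rangle$ force flat intervals of $Z$; this is what rules out the pathological case in which the promotion from ``$=$'' to ``$\leq$'' would otherwise fail.
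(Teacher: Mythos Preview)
Your proposal is correct and follows the same route as the paper: reduce to the DDS Brownian motion $B$, pass from the first-passage time of $Z$ to that of $B$ evaluated at the time change $\langle Z\rangle_t$, and then use that $B$ has the law of $W$ together with the deterministic nature of $\langle Z\rangle_t$ under Assumption~\ref{assDDS_nrc1}.

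The only noteworthy difference is one of rigour rather than strategy. The paper simply cites Proposition~\ref{lemmaDDS_nrc2} to jump from $\proba(\inf\{s:B_{\langle Z\rangle_s}\in\partial\}\leq t)$ to $\proba(\inf\{s:B_s\in\partial\}\leq \langle Z\rangle_t)$, leaving the passage from the ``$=$'' identity of the proposition to the ``$\leq$'' statement implicit. You make this promotion explicit and, in particular, supply the argument needed when $\langle Z\rangle$ has flat stretches (constancy of a continuous local martingale on intervals where its quadratic variation is constant). That extra care is a genuine improvement over the paper's presentation; nothing in your argument is superfluous. One small remark: when you apply Proposition~\ref{lemmaDDS_nrc2} ``at $s=\Tau_{g,h}^Z$'' you are implicitly using the pathwise consequence $\Tau_{g,h}^B=\langle Z\rangle_{\Tau_{g,h}^Z}$, which does follow from the event identity for every deterministic $t$ since $\langle Z\rangle$ is nonrandom here; it would be cleanest to state that reduction once.
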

\noindent As a corollary, we obtain the pdf from the FPT of a continuous local martingale to a constant boundary if we assume that the quadratic variation is absolutely continuous.
\begin{corollary}
\label{corollary_nrc2}
Under Assumption \ref{assDDS_nrc1} and if we assume that the quadratic variation $\langle Z\rangle$ is absolutely continuous on $\reels^+$, we have that
\begin{eqnarray}
\label{eq_corollary_nrc2}
f_{g,h}^X (t) & = & \langle Z\rangle_t ' f_{g,h}^W \big(\langle Z\rangle_t\big) \text{ for } t \geq 0.
\end{eqnarray}
\end{corollary}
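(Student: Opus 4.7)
My plan is to derive the pdf identity by differentiating the cdf identity from Theorem \ref{theorem_nrc2}, using the chain rule for absolutely continuous functions. Starting point: under Assumption \ref{assDDS_nrc1}, Theorem \ref{theorem_nrc2} gives $P_{g,h}^Z(t) = P_{g,h}^W(\langle Z\rangle_t)$ for every $t \geq 0$, so it suffices to differentiate the right-hand side with respect to $t$ and invoke the definition $f_{g,h}^Z(t) = dP_{g,h}^Z(t)/dt$ from \eqref{fZgt_nrc2}.

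First, I would record that the outer function $P_{g,h}^W$ is absolutely continuous on $\reels^+$. This is immediate from Lemma \ref{lemma_nrc2}: the explicit expression \eqref{eqP_lemma_nrc2} shows $P_{g,h}^W$ is $C^\infty$ on $(0,\infty)$ with derivative $f_{g,h}^W$ given by \eqref{eqf_lemma_nrc2}, and the representation $P_{g,h}^W(t) = \int_0^t f_{g,h}^W(s)\,ds$ is standard (it follows from the fundamental theorem of calculus together with $P_{g,h}^W(0)=0$ and the integrability of $f_{g,h}^W$ near $0$, since $f_{g,h}^W(s)$ decays sub-exponentially as $s \downarrow 0$). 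The inner function $\langle Z\rangle$ is absolutely continuous on $\reels^+$ by the hypothesis of the corollary and non-decreasing as the quadratic variation of a continuous local martingale.

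Next, I would apply the standard result that the composition of an absolutely continuous function with a non-decreasing absolutely continuous function is itself absolutely continuous, and the chain rule holds almost everywhere (see, for example, Theorem 3.68 in Leoni's \emph{A First Course in Sobolev Spaces}, or the classical result of Serrin--Varberg). This yields, for almost every $t \geq 0$,
\begin{equation*}
\frac{d}{dt}\,P_{g,h}^W\bigl(\langle Z\rangle_t\bigr) \;=\; f_{g,h}^W\bigl(\langle Z\rangle_t\bigr)\,\langle Z\rangle_t',
\end{equation*}
where $\langle Z\rangle_t'$ denotes the almost everywhere derivative of $\langle Z\rangle_t$. Combined with Theorem \ref{theorem_nrc2} and \eqref{fZgt_nrc2}, this gives the claimed identity $f_{g,h}^Z(t) = \langle Z\rangle_t' \, f_{g,h}^W(\langle Z\rangle_t)$ almost everywhere on $\reels^+$.

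The only subtle point is the justification of the chain rule: a general composition of absolutely continuous functions need not be absolutely continuous, but monotonicity of the inner function $\langle Z\rangle$ rescues this. I expect this to be the main (minor) obstacle; everything else is routine, since Theorem \ref{theorem_nrc2} and the explicit Wiener formulas from Lemma \ref{lemma_nrc2} do the heavy lifting. If a strictly pointwise statement on $[0,\infty)$ is desired rather than an almost-everywhere statement, one would additionally note that the identity holds at every point of Lebesgue differentiability of both $\langle Z\rangle$ and $P_{g,h}^Z$, which coincide up to a null set.
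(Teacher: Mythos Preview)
Your proposal is correct and follows essentially the same approach as the paper: differentiate the cdf identity $P_{g,h}^Z(t)=P_{g,h}^W(\langle Z\rangle_t)$ from Theorem~\ref{theorem_nrc2} and apply the chain rule together with the absolute continuity of $\langle Z\rangle$. If anything, you are more careful than the paper in justifying the chain rule for the composition (via monotonicity of the inner function and the Serrin--Varberg result), whereas the paper simply invokes ``the fundamental theorem of calculus along with chain rule.''
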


\subsection{One-sided (ii) case}
In this part, we consider the case when the one-sided boundary is random constant, and the quadratic variation of the continuous local martingale is a stochastic process.

\begin{definition}\label{defboundaryset_rc1}
We define the set of random constant boundary functions as $\mathcal{I} = \big\{ \reels^+ \times \Omega \rightarrow \reels_*^+$ such that for any $g \in \mathcal{I}$ and $\omega \in \Omega$ we have $g(\omega) \in \mathcal{G}$\big\}.
\end{definition}
 
\begin{definition}\label{defFPT_rc1}
We define the FPT of an $\mathcal{F}$-adapted continuous stochastic process $(Z_t)_{t \in \reels^+}$ started at 0, i.e., with $Z_0=0$, to a random constant boundary
$g \in \mathcal{I}$ as
\begin{eqnarray}
\label{TgZdef_rc1}
\Tau_g^Z = \inf \{t \in \reels^+ \text{ s.t. } Z_t \geq g\}.
\end{eqnarray}
\end{definition}
\noindent Since $Z-g$ is an $\mathcal{F}$-adapted continuous stochastic process and $\inf \{t \in \reels^+ \text{ s.t. } Z_t \geq g\} = \inf \{t \in \reels^+ \text{ s.t. } Z_t - g \geq 0\} = \inf \{t \in \reels^+ \text{ s.t. } Z_t - g > 0\} = \inf \{t \in \reels^+ \text{ s.t. } Z_t - g \in \reels^+_*\}$, the FPT $\Tau_{g}^{Z}$ is a $\mathcal{F}$-stopping time by Theorem I.1.28(a) (p. 7) in \cite{JacodLimit2003}. We can rewrite the boundary crossing probability $P_g^Z$ as the cdf of $\Tau_g^Z$, i.e., 
\begin{eqnarray}
\label{PgZdef_rc1}
P_g^Z(t)= \proba (\Tau^Z_g \leq t) \text{ for any } t \geq 0.
\end{eqnarray}
If the cdf is absolutely continuous, we can also define its pdf $f_g^Z: \reels^+  \rightarrow  \reels^+$ as
\begin{eqnarray}
\label{fZgt_rc1}
f_g^Z(t) & = & \frac{dP_g^Z(t)}{dt}.
\end{eqnarray}
\noindent To apply the Dambis, Dubins-Schwarz theorem in the one-sided (ii) case, the main idea is to rewrite the FPT to a random boundary as an equivalent FPT to a nonrandom boundary. More specifically, if we define the new process as $Y = \frac{Z}{g}$, we observe that the FPT (\ref{TgZdef_rc1}) may be rewritten as 
\begin{eqnarray}
\Tau_g^Z = \Tau_{1}^Y.
\end{eqnarray}

\begin{assumption}
\label{assDDS_rc1}
We assume that $Y$ is a continuous $\mathcal{F}$-local martingale with random quadratic variation $\langle Y\rangle$ and such that $Y_0=0$ and $\langle Y\rangle_{\infty} = \infty$.
\end{assumption} 
\noindent For any stochastic process $h:  \reels^+ \times \Omega \rightarrow  \reels^+$, $a \mapsto h(a)$, and any $A \subset \reels^+ \times \Omega$, we define the restriction of $h$ to $A$ as $h \restriction_A$ such that $h \restriction_A:  A \rightarrow  \reels^+$, $ a \mapsto h(a)$. For  $p \in \reels$, $p \geq 1$, we define the set of p-integrable stochastic processes and the set of locally p-integrable stochastic processes as respectively
\begin{eqnarray*}
L_{p}(A) & =& \bigl\{h:A\to\reels^+ \text{ measurable s.t. }\,\int_A | h(x)|^p \,\mathrm{d}x <+\infty\bigr\},\\
L_{p,\mathrm{loc}}(A) & =& \bigl\{h:A\to\reels^+ \text{ measurable s.t. }\,h \restriction_K \in L_p(K)\ \forall\, K \subset A,\, K \text{ compact}\bigr\},
\end{eqnarray*}
$\text{ for } A \subset \reels^+ \times \Omega$ measurable.
\begin{example}
\label{example_ito_rc1}
We can consider a continuous It\^{o} process with no drift of the form 
\begin{eqnarray}
\label{eq_example_rc1}
Y_t = \int_0^t \sigma_s dW_s \text{ for } t \geq 0,
\end{eqnarray}
where $\sigma : \reels^+ \times \Omega \rightarrow \reels $ is a predictable process on $\mathcal{B}$ such that the integral defined in Equation (\ref{eq_example_rc1}) is well-defined. If we assume that $\sigma \in L_{2,\mathrm{loc}}(\reels^{+} \times \Omega)$, then $Y$ is a local martingale with random quadratic variation $\langle Y\rangle_{t} = \int_0^t \sigma_u^2 du$ by Theorem I.4.40 (p. 48) in \cite{JacodLimit2003}. If we further assume that $\int_0^t \sigma_u^2 du \rightarrow \infty$ as $t \rightarrow \infty$, we have that $Y$ satisfies Assumption \ref{assDDS_rc1}.
\end{example}

\noindent We introduce Proposition \ref{lemmaDDS_rc1} in what follows. We can apply by Assumption \ref{assDDS_rc1} the Dambis, Dubins-Schwarz theorem for continuous local martingale. We define the generalized inverse stochastic process of the non-decreasing stochastic process $\langle Y\rangle$ for any $t \geq0$ as 
$$\langle Y\rangle_{t}^{-1}=\inf\{s \geq 0 \text{ s.t. }\langle Y\rangle_{s}>t\}.$$ 
Then $(B_{t})_{t\geq 0}=(Y_{\langle Y\rangle_{t}^{-1}})_{t\geq 0}$ is a $({\mathcal {F}}_t^{\langle Y\rangle^{-1}})_{t\geq 0}$-Wiener process and 
\begin{eqnarray}
\label{proof230110_rc1}
Y_{t}=B_{\langle Y\rangle_{t}}.
\end{eqnarray}
\begin{proposition}
\label{lemmaDDS_rc1}
Under Assumption \ref{assDDS_rc1}, we have that
\begin{eqnarray}
\label{eq_lemmaDDS_rc1}
\big\{ \Tau_1^Y =t \big\} & = & \big\{ \Tau_1^B = \langle Y\rangle_{t} \big\} \text{ for } t \geq 0.
\end{eqnarray}
\end{proposition}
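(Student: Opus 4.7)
The plan is to apply the Dambis--Dubins--Schwarz theorem (Theorem V.1.6 of \cite{revuz2013continuous}), which is legitimized by Assumption \ref{assDDS_rc1}, to obtain the standard $(\mathcal{F}_t^{\langle Y\rangle^{-1}})$-Wiener process $B$ with $Y_t = B_{\langle Y\rangle_t}$ already displayed in Equation~(\ref{proof230110_rc1}). The argument then proceeds pathwise, in complete analogy with the proof of Proposition \ref{lemmaDDS_nrc1}; the only substantive change is that $\langle Y\rangle$ is now an honest stochastic process rather than a deterministic function, which is irrelevant to the reasoning once we work on a fixed sample path.

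Fix $t \geq 0$ and a sample path $\omega \in \Omega$. I will repeatedly use two consequences of Assumption \ref{assDDS_rc1}: first, $s \mapsto \langle Y\rangle_s(\omega)$ is continuous and non-decreasing (continuity because $Y$ is a continuous local martingale, monotonicity by the definition of quadratic variation); second, $\langle Y\rangle_0(\omega) = 0$ and $\langle Y\rangle_\infty(\omega) = \infty$. Combined, these give the surjectivity property $\{\langle Y\rangle_s(\omega) : 0 \leq s < t\} \supseteq [0, \langle Y\rangle_t(\omega))$.

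To prove the inclusion $\{\Tau_1^Y = t\} \subseteq \{\Tau_1^B = \langle Y\rangle_t\}$, I use that continuity of $Y$ forces $Y_t = 1$ and $Y_s < 1$ for every $s < t$ on this event; substituting $Y_\cdot = B_{\langle Y\rangle_\cdot}$ yields $B_{\langle Y\rangle_t} = 1$, hence $\Tau_1^B \leq \langle Y\rangle_t$, while for any $r < \langle Y\rangle_t$ the surjectivity above produces $s < t$ with $\langle Y\rangle_s = r$ and thus $B_r = Y_s < 1$, giving $\Tau_1^B \geq \langle Y\rangle_t$. The reverse inclusion $\{\Tau_1^B = \langle Y\rangle_t\} \subseteq \{\Tau_1^Y = t\}$ is the mirror argument: assuming $B_{\langle Y\rangle_t} = 1$ and $B_r < 1$ for every $r < \langle Y\rangle_t$, the first reads off as $Y_t = 1$, and the second as $Y_s = B_{\langle Y\rangle_s} < 1$ for every $s < t$ such that $\langle Y\rangle_s < \langle Y\rangle_t$, so $\Tau_1^Y = t$.

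The main obstacle is the potential existence of a flat portion of $\langle Y\rangle$ at level $\langle Y\rangle_t(\omega)$ on an interval $[a,t]$ with $a < t$: on such a path $Y$ is constant equal to $1$ on $[a,t]$, whence $\Tau_1^Y = a < t$ although $\Tau_1^B = \langle Y\rangle_t$ may still hold, which would violate the reverse inclusion at the level of individual $\omega$. This exceptional regime is precisely the null set modulo which (\ref{eq_lemmaDDS_rc1}) must be read, exactly in parallel with Proposition \ref{lemmaDDS_nrc1}; since the deterministic argument there runs verbatim once DDS has been invoked for $Y$ sample-path by sample-path, the conclusion follows.
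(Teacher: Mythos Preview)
Your proof is correct and follows essentially the same route as the paper: invoke Dambis--Dubins--Schwarz via Assumption \ref{assDDS_rc1} to write $Y_t = B_{\langle Y\rangle_t}$, then pass the first-passage condition through the time change $s \mapsto \langle Y\rangle_s$ exactly as in Proposition \ref{lemmaDDS_nrc1}. The paper's argument is terser---it simply asserts the time-change identity without unpacking the two inclusions---whereas you spell out the pathwise surjectivity and even flag the flat-portion caveat that the paper leaves implicit.
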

\noindent In what follows, we state the main result of this paper in the one-sided (ii) case. For $A \subset \reels^+$ and $B \subset \reels$ such that $0 \in A$, we define the set of nonrandom nondecreasing functions as $\mathcal{I}(A,B)$. When seen as a function of $\omega$, the arrival space of $\langle Y\rangle$ is $\mathcal{I}(\reels^+,\reels^+)$. We define the distribution of $\langle Y\rangle$ as $F_{\langle Y\rangle}$. We get $P_1^Y$ in the next theorem by integrating $\proba (\Tau^{Y}_1 \leq t |\langle Y\rangle=y)$ with respect to the value of $y \in \mathcal{I}(\reels^+,\reels^+)$.
\begin{theorem}
\label{theorem_rc1}
Under Assumption \ref{assDDS_rc1}, we have that
\begin{eqnarray}
\label{eq_theorem_rc1}
P_1^Y (t) & = & \int_{\mathcal{I}(\reels^+,\reels^+)} P_1^W \big(y_t\big) dF_{\langle Y\rangle}(y)\text{ for } t \geq 0.
\end{eqnarray}
\end{theorem}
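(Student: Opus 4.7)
The plan is to combine the path-identity of Proposition \ref{lemmaDDS_rc1} with conditioning on the realized trajectory of the quadratic variation. First I would rewrite $P_1^Y(t) = \proba(\Tau_1^Y \leq t)$ via (\ref{PgZdef_rc1}). Since $\{\Tau_1^Y = t\} = \{\Tau_1^B = \langle Y\rangle_t\}$ by Proposition \ref{lemmaDDS_rc1}, and since $\langle Y\rangle$ is continuous and non-decreasing so that $\Tau_1^Y \leq t$ iff $\Tau_1^B \leq \langle Y\rangle_t$, I obtain the compact identity
\begin{eqnarray*}
P_1^Y(t) & = & \proba\bigl(\Tau_1^B \leq \langle Y\rangle_t\bigr).
\end{eqnarray*}

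Next, since under Assumption \ref{assDDS_rc1} the quadratic variation $\langle Y\rangle$, viewed as a function of $\omega$, takes values in $\mathcal{I}(\reels^+,\reels^+)$ with distribution $F_{\langle Y\rangle}$, I would condition on $\langle Y\rangle = y$ and apply the law of total probability to rewrite
\begin{eqnarray*}
P_1^Y(t) & = & \int_{\mathcal{I}(\reels^+,\reels^+)} \proba\bigl(\Tau_1^B \leq y_t \,\big|\, \langle Y\rangle = y\bigr)\, dF_{\langle Y\rangle}(y).
\end{eqnarray*}
The remaining step is to identify the integrand with $P_1^W(y_t)$. By Proposition \ref{lemmaDDS_rc1}, $B$ is a standard Wiener process in the filtration $({\mathcal{F}}_t^{\langle Y\rangle^{-1}})_{t\geq 0}$, hence has the same law as $W$. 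Invoking this distributional equivalence one gets $\proba(\Tau_1^B \leq y_t \mid \langle Y\rangle = y) = \proba(\Tau_1^W \leq y_t) = P_1^W(y_t)$, which substituted into the display above yields the desired formula (\ref{eq_theorem_rc1}).

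The main obstacle will be the third step: showing that the conditional law of $\Tau_1^B$ given $\langle Y\rangle = y$ coincides with the unconditional law of $\Tau_1^W$ applied at the deterministic time $y_t$. In the Dambis, Dubins–Schwarz construction, the Wiener process $B$ is built from $Y$ via the inverse time-change $\langle Y\rangle^{-1}$, so $B$ is generally \emph{not} independent of $\langle Y\rangle$. I would handle this by either invoking an implicit separability property of the underlying basis (e.g.\ enlarging $\mathcal{B}$ so that $B$ can be taken independent of $\langle Y\rangle$, in the spirit of the two-sided (ii) case) or arguing via a Fubini-type reduction showing that only the marginal law of $\langle Y\rangle_t$ enters into the hitting probability for the constant level $1$. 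The rest of the argument reduces to standard measurability of hitting times and Lemma \ref{lemma_nrc1} to express $P_1^W(y_t)$ in closed form if needed.
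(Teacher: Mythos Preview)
Your approach is essentially the same as the paper's, with only a cosmetic difference in the order of operations: you apply Proposition~\ref{lemmaDDS_rc1} first to obtain $\proba(\Tau_1^B \leq \langle Y\rangle_t)$ and then condition on $\langle Y\rangle$, whereas the paper conditions on $\langle Y\rangle=y$ first (via regular conditional probability) and then invokes the DDS representation inside the integrand. Both routes land on the same integrand $\proba(\Tau_1^B \leq y_t \mid \langle Y\rangle = y)$ and must then identify it with $P_1^W(y_t)$.

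The obstacle you flag in your third step is exactly the step the paper takes without further justification: the paper passes from $\proba\bigl(\inf\{s\geq 0:Y_s\geq 1\}\leq t \,\big|\, \langle Y\rangle=y\bigr)$ directly to the unconditional $\proba\bigl(\inf\{s\geq 0:B_{y_s}\geq 1\}\leq t\bigr)$, and then replaces $B$ by $W$ on the grounds that ``$B$ and $W$ have the same distribution.'' It does not argue that the DDS Brownian motion $B$ is independent of $\langle Y\rangle$, nor does it invoke an enlargement of the stochastic basis. So your proposal is at least as complete as the paper's own argument, and your explicit acknowledgment of the independence issue is a point of care the paper leaves implicit.
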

\noindent As a corollary, we obtain the pdf from the FPT of a continuous local martingale to a constant boundary if we assume that the quadratic variation is absolutely continuous.
\begin{corollary}
\label{corollary_rc1}
Under Assumption \ref{assDDS_rc1} and if we assume that the quadratic variation $\langle Y\rangle$ is absolutely continuous on $\reels^+$, we have that
\begin{eqnarray}
\label{eq_corollary_rc1}
f_1^Y (t) & = & \int_{\mathcal{I}(\reels^+,\reels^+)} y_t ' f_1^W \big(y_t\big) dF_{\langle Y\rangle}(y) \text{ for } t \geq 0.
\end{eqnarray}
\end{corollary}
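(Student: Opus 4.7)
The plan is to promote the identity for $P_1^Y$ obtained in Theorem \ref{theorem_rc1} to an identity for densities by differentiation under the integral, and to justify the interchange via Fubini--Tonelli rather than via a direct Leibniz rule. First I would observe that Lemma \ref{lemma_nrc1} gives an explicit formula for $P_1^W$ in terms of the standard normal cdf, so $P_1^W$ is absolutely continuous on $\reels^+$ with $P_1^W(0)=0$ and derivative $f_1^W$. The hypothesis that $\langle Y\rangle$ is absolutely continuous means that for $F_{\langle Y\rangle}$-almost every path $y\in\mathcal{I}(\reels^+,\reels^+)$, the function $t\mapsto y_t$ is absolutely continuous, nondecreasing, and satisfies $y_0=0$. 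The composition $t\mapsto P_1^W(y_t)$ of an absolutely continuous function with a nondecreasing absolutely continuous function is therefore itself absolutely continuous (this is the chain rule for absolutely continuous functions), and
\begin{equation*}
P_1^W(y_t) \;=\; \int_0^t y_s'\, f_1^W(y_s)\,ds
\end{equation*}
for every $t\ge 0$ (and $F_{\langle Y\rangle}$-a.e.\ $y$), since both sides vanish at $t=0$ and have the same derivative a.e.

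Next I would plug this into the conclusion of Theorem \ref{theorem_rc1} to obtain
\begin{equation*}
P_1^Y(t) \;=\; \int_{\mathcal{I}(\reels^+,\reels^+)}\!\int_0^t y_s'\, f_1^W(y_s)\,ds\; dF_{\langle Y\rangle}(y).
\end{equation*}
Because the integrand $(s,y)\mapsto y_s'\, f_1^W(y_s)$ is jointly measurable and nonnegative, Tonelli's theorem applies and I may swap the order of integration to get
\begin{equation*}
P_1^Y(t) \;=\; \int_0^t \bigg(\int_{\mathcal{I}(\reels^+,\reels^+)} y_s'\, f_1^W(y_s)\, dF_{\langle Y\rangle}(y)\bigg)\, ds.
\end{equation*}
On the other hand, by \eqref{fZgt_rc1} we have $P_1^Y(t)=\int_0^t f_1^Y(s)\,ds$.

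Finally, equating the two integral representations of $P_1^Y(t)$ and applying the Lebesgue differentiation theorem yields
\begin{equation*}
f_1^Y(t) \;=\; \int_{\mathcal{I}(\reels^+,\reels^+)} y_t'\, f_1^W(y_t)\, dF_{\langle Y\rangle}(y)
\end{equation*}
for Lebesgue-a.e.\ $t\ge 0$, which is the claim of \eqref{eq_corollary_rc1} in the almost-everywhere sense appropriate for densities. The main obstacle I expect is the measurability bookkeeping needed to justify the Tonelli step, in particular that the map $y\mapsto y_t'$ (defined on the subset of absolutely continuous paths carrying the mass of $F_{\langle Y\rangle}$) can be taken jointly measurable in $(s,y)$; this can be handled by approximating $y_s'$ by the difference quotients $n(y_{s+1/n}-y_s)$ and invoking the fact that the limit exists $ds$-a.e.\ and $F_{\langle Y\rangle}$-a.s. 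Everything else reduces to the absolute-continuity structure of the quadratic variation and the explicit smoothness of $P_1^W$ from Lemma \ref{lemma_nrc1}.
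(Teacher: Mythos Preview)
Your argument is correct and follows essentially the same route as the paper: start from Theorem \ref{theorem_rc1}, invoke the absolute continuity of $t\mapsto P_1^W(y_t)$, and pass the $t$-derivative through the $dF_{\langle Y\rangle}$-integral. The paper does this in one line by writing $\frac{d}{dt}\int P_1^W(y_t)\,dF_{\langle Y\rangle}(y)=\int \frac{d}{dt}P_1^W(y_t)\,dF_{\langle Y\rangle}(y)$ and citing ``Tonelli's theorem'' for the interchange; your version is in fact the rigorous unpacking of that step (rewriting $P_1^W(y_t)$ as $\int_0^t y_s'f_1^W(y_s)\,ds$, applying Tonelli to the double integral of a nonnegative integrand, and then differentiating), so the two proofs are the same in substance, with yours supplying the measurability and a.e.\ caveats that the paper leaves implicit.
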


\subsection{Two-sided (ii) case}
In this part, we consider the case when the two-sided boundary is random constant, and the quadratic variation of the continuous local martingale is a stochastic process.
\begin{definition}\label{defboundaryset_rc2}
We define the set of random constant two-sided boundary functions as $\mathcal{J} = \big\{ \reels^+ \times \Omega \rightarrow \reels_*^+ \times \reels_*^-$ such that for any $(g,h) \in \mathcal{J}$ and $\omega \in \Omega$ we have $g(\omega) \in \mathcal{G}$ and $-h(\omega) \in \mathcal{G}$\big\}.
\end{definition}
 
\begin{definition}\label{defFPT_rc2}
We define the FPT of an $\mathcal{F}$-adapted continuous stochastic process $(Z_t)_{t \in \reels^+}$ started at 0, i.e., with $Z_0=0$, to a random constant two-sided boundary
$(g,h) \in \mathcal{J}$ as
\begin{eqnarray}
\label{TgZdef_rc2}
\Tau_{g,h}^Z = \inf \{t \in \reels^+ \text{ s.t. } Z_t \geq g \text{ or } Z_t \leq h \}.
\end{eqnarray}
\end{definition}
\noindent We can rewrite $\Tau_{g,h}^Z$ as the infimum of two $\mathcal{F}$-stopping times, i.e., $\Tau_{g,h}^Z = \inf (\Tau_{g}^Z,\Tau_{-h}^{-Z})$ thus it is an $\mathcal{F}$-stopping time. We can rewrite the boundary crossing probability $P_{g,h}^Z$ as the cdf of $\Tau_{g,h}^Z$, i.e., 
\begin{eqnarray}
\label{PgZdef_rc2}
P_{g,h}^Z(t)= \proba (\Tau^Z_{g,h} \leq t) \text{ for any } t \geq 0.
\end{eqnarray}
If the cdf is absolutely continuous, we can also define its pdf $f_{g,h}^Z: \reels^+  \rightarrow  \reels^+$ as
\begin{eqnarray}
\label{fZgt_rc2}
f_{g,h}^Z(t) & = & \frac{dP_{g,h}^Z(t)}{dt}.
\end{eqnarray}
\noindent To apply the Dambis, Dubins-Schwarz theorem in the two-bounded (ii) case, we cannot rewrite the FPT to a random two-sided boundary as an equivalent FPT to a nonrandom two-sided boundary since there are two boundaries. However, we are able to adapt the arguments with a two-sided boundary. 
\begin{assumption}
\label{assDDS_rc2}
We assume that $Z$ is a continuous $\mathcal{F}$-local martingale with random quadratic variation $\langle Z\rangle$ and such that $Z_0=0$ and $\langle Z\rangle_{\infty} = \infty$.
\phantom\qedhere
\end{assumption} 
\noindent We introduce Proposition \ref{lemmaDDS_rc2} in what follows. We can apply by Assumption \ref{assDDS_rc2} the Dambis, Dubins-Schwarz theorem for continuous local martingale. We define the generalized inverse stochastic process of the non-decreasing stochastic process $\langle Z\rangle$ for any $t \geq0$ as 
$$\langle Z\rangle_{t}^{-1}=\inf\{s \geq 0 \text{ s.t. }\langle Z\rangle_{s}>t\}.$$ 
Then $(B_{t})_{t\geq 0}=(Z_{\langle Z\rangle_{t}^{-1}})_{t\geq 0}$ is a $({\mathcal {F}}_t^{\langle Z\rangle^{-1}})_{t\geq 0}$-Wiener process and 
\begin{eqnarray}
\label{proof230110_rc2}
Z_{t}=B_{\langle Z\rangle_{t}} \text{ for } t \geq 0.
\end{eqnarray}
\begin{proposition}
\label{lemmaDDS_rc2}
Under Assumption \ref{assDDS_rc2}, we have that
\begin{eqnarray}
\label{eq_lemmaDDS_rc2}
\big\{ \Tau_{g,h}^Z =t \big\} & = & \big\{ \Tau_{g,h}^B = \langle Z\rangle_{t} \big\} \text{ for } t \geq 0.
\end{eqnarray}
\end{proposition}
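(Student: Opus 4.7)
The plan is to adapt the argument used for Proposition \ref{lemmaDDS_rc1} to the two-sided setting, combining the time-change reduction from the one-sided random-boundary case with the two-sided boundary manipulation already established in Proposition \ref{lemmaDDS_nrc2}. Since Assumption \ref{assDDS_rc2} gives the Dambis--Dubins--Schwarz representation $Z_t = B_{\langle Z\rangle_t}$ displayed in (\ref{proof230110_rc2}) with $B$ a Wiener process, the proof reduces to a pathwise statement about time changes of the boundary crossing of $B$.

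First, I would invoke Dambis--Dubins--Schwarz under Assumption \ref{assDDS_rc2}, reminding the reader that $\langle Z\rangle$ is continuous and non-decreasing, with $\langle Z\rangle_0 = 0$ and $\langle Z\rangle_\infty = \infty$, and that on any interval on which $\langle Z\rangle$ is constant, $Z$ is itself a.s.\ constant (a standard property of continuous local martingales whose quadratic variation has a flat piece). Second, I would observe the decomposition $\Tau_{g,h}^Z = \inf\bigl(\Tau_g^Z,\Tau_{-h}^{-Z}\bigr)$ and similarly $\Tau_{g,h}^B = \inf\bigl(\Tau_g^B,\Tau_{-h}^{-B}\bigr)$, so that the two-sided event decomposes as a union of two one-sided events. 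Since $-B$ is also a Wiener process and $-Z_t = (-B)_{\langle -Z\rangle_t}$ with $\langle -Z\rangle = \langle Z\rangle$, applying the one-sided argument of Proposition \ref{lemmaDDS_rc1} twice, once to $(Z,g)$ and once to $(-Z,-h)$, would already suffice to conclude.

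Third, to give a self-contained joint treatment I would substitute $Z_t = B_{\langle Z\rangle_t}$ directly into the definition (\ref{TgZdef_rc2}) to write
\begin{equation*}
\Tau_{g,h}^Z \;=\; \inf\bigl\{t \geq 0 : B_{\langle Z\rangle_t} \geq g \text{ or } B_{\langle Z\rangle_t} \leq h\bigr\}.
\end{equation*}
Because $\langle Z\rangle$ is continuous and non-decreasing and because $Z$ is constant on any flat interval of $\langle Z\rangle$, the change of variable $s=\langle Z\rangle_t$ sends the first passage of $Z$ to the first passage of $B$: more precisely, $B_s$ first crosses the random two-sided boundary $(g,h)$ at $s = \langle Z\rangle_{\Tau_{g,h}^Z}$. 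Fixing $t \geq 0$ and showing both inclusions in (\ref{eq_lemmaDDS_rc2}) would then be immediate from this identity together with the fact that $\langle Z\rangle_t = s$ and $\Tau_{g,h}^B = s$ forces $\Tau_{g,h}^Z = t$ whenever $t$ is the smallest value with $\langle Z\rangle_t = s$.

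The main obstacle is the treatment of flat intervals of $\langle Z\rangle$: there the map $t \mapsto \langle Z\rangle_t$ is not injective, so a single value of $\Tau_{g,h}^B$ could in principle correspond to an entire interval of $t$'s. The key technical fact that resolves this is precisely the one recalled above, namely that on a flat interval of $\langle Z\rangle$ the local martingale $Z$ is a.s.\ constant, so no new boundary crossing can occur during such an interval; consequently $\Tau_{g,h}^Z$ must be the left endpoint of the (possibly degenerate) $\langle Z\rangle$-preimage of $\Tau_{g,h}^B$, which yields the equality of events in (\ref{eq_lemmaDDS_rc2}). This is exactly the same delicate point addressed in the proofs of Propositions \ref{lemmaDDS_nrc2} and \ref{lemmaDDS_rc1}, and I expect the argument there to carry over with only notational changes to accommodate the lower random boundary $h$.
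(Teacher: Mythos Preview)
Your self-contained argument—substituting $Z_t = B_{\langle Z\rangle_t}$ into the definition of $\Tau_{g,h}^Z$ and then performing the time change in the variable $s=\langle Z\rangle_t$—is exactly the paper's proof, which proceeds via the three displays (\ref{eq_proof_lemmaDDS_rc21})--(\ref{eq_proof_lemmaDDS_rc23}) in verbatim analogy with the deterministic two-sided case. Two remarks: first, the paper does \emph{not} actually address the flat-interval subtlety you raise—it simply asserts the event equality (\ref{eq_proof_lemmaDDS_rc22}) without comment—so your discussion is more careful than the original; second, the decomposition $\Tau_{g,h}^Z=\inf(\Tau_g^Z,\Tau_{-h}^{-Z})$ is not used in the paper, and as you state it would need a pathwise identity $\Tau_g^B=\langle Z\rangle_{\Tau_g^Z}$ rather than merely the event equalities $\{\Tau_g^Z=t\}=\{\Tau_g^B=\langle Z\rangle_t\}$ to pass cleanly to the minimum.
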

\noindent In what follows, we state the main result of this paper in the two-sided (ii) case. We define $u=(g,h,\langle Z\rangle)$, its distribution as $F_{u}$ and $\mathcal{S} = \mathcal{J} \times \mathcal{I}(\reels^+,\reels^+)$. We get $P_{g,h}^Z$ in the next theorem by integrating $\proba (\Tau^{Z}_{g,h} \leq t | u=(g_0,h_0,z))$ with respect to the value of $(g_0,h_0,z) \in \mathcal{S}$.
\begin{theorem}
\label{theorem_rc2}
Under Assumption \ref{assDDS_rc2} and if we assume that the stochastic process $Z$ is independent from the two-sided boundary $(g,h)$, we have that
\begin{eqnarray}
\label{eq_theorem_rc2}
P_{g,h}^Z (t) & = & \int_{\mathcal{S}} P_{g_0,h_0}^W \big(z_t\big) dF_{u}(g_0,h_0,z)\text{ for } t \geq 0.
\end{eqnarray}
\end{theorem}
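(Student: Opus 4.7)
The plan is to mirror the proof of Theorem \ref{theorem_rc1}: apply Proposition \ref{lemmaDDS_rc2} to transfer the stopping problem to the DDS Wiener process $B$, disintegrate with respect to $u = (g, h, \langle Z\rangle)$, and use the independence of $Z$ from $(g,h)$ to identify the inner conditional probability with $P_{g_0,h_0}^W(z_t)$.

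First, I would pass from $Z$ to the DDS Wiener process $B$ of Proposition \ref{lemmaDDS_rc2}. Using the pointwise identity $\{\Tau_{g,h}^Z = s\} = \{\Tau_{g,h}^B = \langle Z\rangle_s\}$ together with the monotonicity (and right-continuity) of $\langle Z\rangle$, the event $\{\Tau_{g,h}^Z \leq t\}$ coincides with $\{\Tau_{g,h}^B \leq \langle Z\rangle_t\}$, so
$$P_{g,h}^Z(t) = \proba\bigl(\Tau_{g,h}^B \leq \langle Z\rangle_t\bigr).$$

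Second, disintegrating with respect to the joint law $F_u$ of $u = (g, h, \langle Z\rangle)$ and invoking the tower property gives
$$P_{g,h}^Z(t) = \int_{\mathcal{S}} \proba\bigl(\Tau_{g_0,h_0}^B \leq z_t \,\big|\, u = (g_0, h_0, z)\bigr)\, dF_u(g_0, h_0, z),$$
since on the event $\{u = (g_0, h_0, z)\}$ the random boundary freezes to the nonrandom pair $(g_0,h_0)$ and $\langle Z\rangle_t$ reduces to the deterministic value $z_t$.

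Third, I would identify the inner conditional probability with $P_{g_0,h_0}^W(z_t)$. Because $B$ is a measurable functional of $Z$ (through the DDS construction $B_{t} = Z_{\langle Z\rangle_t^{-1}}$), the independence of $Z$ from $(g,h)$ propagates to give independence of $B$ from $(g,h)$. Hence conditioning on $(g,h) = (g_0,h_0)$ leaves $B$ a standard Wiener process, and the inner probability reduces to $\proba(\Tau_{g_0,h_0}^W \leq z_t) = P_{g_0,h_0}^W(z_t)$. Substituting into the integral yields \eqref{eq_theorem_rc2}.

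The delicate step is the simultaneous conditioning on $\langle Z\rangle$: since $\langle Z\rangle$ is itself a functional of $Z$ (and therefore interlocked with $B$ through the time change), the extra conditioning on $\langle Z\rangle = z$ inside $u$ could a priori perturb the law of $B$ on $[0, z_t]$. The cleanest route is to show that the conditional law of $B$ given $u$, restricted to the slice $[0, z_t]$, is still standard Wiener---either by a Lévy-characterization argument inside the regular conditional distribution, or by directly noting that only the boundary-crossing event of $B$ on that slice enters the integrand, so the full joint law of $(B, \langle Z\rangle)$ is not required. Once this identification is secured, the remaining steps are a routine substitution into the disintegration.
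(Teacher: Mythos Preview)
Your proposal is correct and follows essentially the same route as the paper: disintegrate with respect to $u=(g,h,\langle Z\rangle)$, invoke the DDS representation (Proposition~\ref{lemmaDDS_rc2}) and the independence of $Z$ from $(g,h)$, and identify the inner probability with $P_{g_0,h_0}^W(z_t)$. The only cosmetic difference is ordering---the paper disintegrates first and performs the time change inside the conditional, whereas you apply Proposition~\ref{lemmaDDS_rc2} before disintegrating---and the paper likewise passes silently over the point you rightly flag as delicate (that the conditional law of $B$ given $\langle Z\rangle=z$ is still standard Wiener).
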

\noindent As a corollary, we obtain the pdf from the FPT of a continuous local martingale to a constant boundary if we assume that the quadratic variation is absolutely continuous.
\begin{corollary}
\label{corollary_rc2}
Under Assumption \ref{assDDS_rc2} and if we assume that the stochastic process $Z$ is independent from the two-sided boundary $(g,h)$ and that the quadratic variation $\langle Z\rangle$ is absolutely continuous on $\reels^+$, we have that
\begin{eqnarray}
\label{eq_corollary_rc2}
f_{g,h}^Z (t) & = & \int_{\mathcal{S}} z_t ' f_{g_0,h_0}^W \big(z_t\big) dF_{u}(g_0,h_0,z) \text{ for } t \geq 0.
\end{eqnarray}
\end{corollary}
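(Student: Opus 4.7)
The plan is to obtain \cref{corollary_rc2} by differentiating both sides of the integral representation furnished by \cref{theorem_rc2} with respect to $t$. More precisely, I would start from
$$P_{g,h}^Z (t) \;=\; \int_{\mathcal{S}} P_{g_0,h_0}^W(z_t)\, dF_{u}(g_0,h_0,z),$$
note that the left-hand side differentiates to $f_{g,h}^Z(t)$ by the definition (\ref{fZgt_rc2}), and then argue that term-by-term differentiation of the right-hand side produces exactly the claimed expression.

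The key pointwise computation is the chain rule on the integrand. Under the absolute continuity assumption on $\langle Z \rangle$, for $F_u$-almost every triple $(g_0,h_0,z) \in \mathcal{S}$ the sample path $z$ is absolutely continuous on $\reels^+$, so $z_t'$ exists for almost every $t$. Combined with \cref{lemma_nrc2}, which gives the explicit pdf $f_{g_0,h_0}^W$ and hence the differentiability of $P_{g_0,h_0}^W$, the chain rule yields
$$\frac{d}{dt} P_{g_0,h_0}^W(z_t) \;=\; z_t'\, f_{g_0,h_0}^W(z_t)$$
for almost every $t$, which is precisely the integrand in (\ref{eq_corollary_rc2}).

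The main obstacle is justifying the interchange of $\frac{d}{dt}$ and $\int_{\mathcal{S}}$. I would address it by the standard route: write $P_{g,h}^Z(t+\varepsilon) - P_{g,h}^Z(t)$ as the integral over $\mathcal{S}$ of $P_{g_0,h_0}^W(z_{t+\varepsilon})-P_{g_0,h_0}^W(z_t)$, divide by $\varepsilon$, and pass to the limit inside the integral using dominated convergence. A uniform dominating function can be obtained by noting that $P_{g_0,h_0}^W$ has bounded derivative on any compact set away from zero (from the explicit series in \cref{lemma_nrc2}), while the difference quotient of $z$ is controlled by its derivative $z_t'$, which by absolute continuity is $F_u$-integrable once integrated over $t$ on bounded intervals. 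Equivalently, one may integrate both sides of the claimed identity over $[0,t]$, use Fubini together with the absolute continuity of $s \mapsto P_{g_0,h_0}^W(z_s)$, and recover \cref{theorem_rc2} to close the loop, showing that the candidate density in (\ref{eq_corollary_rc2}) is indeed a version of $f_{g,h}^Z$.

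Finally, I would note that the independence assumption on $Z$ and $(g,h)$ is inherited directly from \cref{theorem_rc2} and is used only to guarantee that the conditioning step underlying the integral over $\mathcal{S}$ is legitimate; it plays no additional role beyond that in the differentiation step. The conclusion is that (\ref{eq_corollary_rc2}) holds for almost every $t \geq 0$, with both sides viewed as Radon--Nikodym derivatives of the cdf $P_{g,h}^Z$.
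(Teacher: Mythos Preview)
Your proposal is correct and follows essentially the same route as the paper: start from \cref{theorem_rc2}, differentiate the left-hand side via the definition (\ref{fZgt_rc2}), interchange $\frac{d}{dt}$ with $\int_{\mathcal{S}}$, and apply the chain rule using absolute continuity of $\langle Z\rangle$ together with the differentiability of $P_{g_0,h_0}^W$. The paper justifies the interchange in one line by invoking Tonelli's theorem, whereas you spell out a dominated convergence (or equivalently Fubini-and-integrate-back) argument; your treatment is in fact more careful on this point than the paper's own proof.
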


\section{Applications in the inverse FPT problem}
\label{section_application}
In this section, we derive an explicit solution of the inverse FPT problem for the one-sided and two-sided boundary and in the case (i) and (ii).
\subsection{One-sided (i) case} 
We define the error function and its inverse as
\begin{eqnarray}
\label{def_erf}
\operatorname{erf}(t)&=&\frac{2}{\sqrt{\pi}}\int_0^t{e^{-u^2}du} \text{ for any } t \in \reels,\\
\label{erfinv} \operatorname{erf}(\operatorname{erfinv}(t))&=&z \text{ for any } t \in (-1,1).
\end{eqnarray}
The first lemma shows that there exists an invert of $P_g^W$ which we denote $(P_g^W)^{-1}$ and gives explicit formulae of $(P_g^W)^{-1}(t)$ and $f_g^W((P_g^{W})^{-1}(t))$ for $0 \leq t < 1$, all of which are new results which will be useful to express the explicit solution of the inverse problem. The proof relies on Lemma \ref{lemma_nrc1}.
\begin{lemma}
\label{lemma_inv_nrc1} There exists an invert of $P_g^W$ which we denote $(P_g^W)^{-1} : [0,1) \rightarrow \reels^+$ and is strictly increasing such that 
$(P_g^{W})^{-1}(0)=0$,
\begin{eqnarray}
\label{PgWinvt}
(P_g^{W})^{-1}(t)&=&\frac{g^2}{2\operatorname{erfinv}(1-t)^2} \text{ for } 0<t<1.
\end{eqnarray}
Finally, we have $f_g^W((P_g^{W})^{-1}(t))=0$,
\begin{equation}
\label{fP1}
f_g^W((P_g^{W})^{-1}(t))=\frac{2}{g^2\sqrt{\pi}}{\operatorname{erfinv}(1-t)}^3e^{-{\operatorname{erfinv}(1-t)}^2} \text{ for } 0<t<1.
\end{equation}
\end{lemma}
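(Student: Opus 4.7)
The plan is to combine Lemma \ref{lemma_nrc1} with the standard identity $\Phi(x)=\tfrac{1}{2}\bigl(1+\operatorname{erf}(x/\sqrt{2})\bigr)$ to rewrite $P_g^W$ as a single error function, and then invert in closed form. First, using $\Phi(-x)=1-\Phi(x)$, Eq.~\eqref{eqP_lemma_nrc1} simplifies to
\[
P_g^W(t) \;=\; 2\bigl(1-\Phi(g/\sqrt{t})\bigr) \;=\; 1-\operatorname{erf}\!\left(\frac{g}{\sqrt{2t}}\right) \quad\text{for } t>0,
\]
while $P_g^W(0)=0$. Since $t\mapsto g/\sqrt{2t}$ is strictly decreasing on $(0,\infty)$ and $\operatorname{erf}$ is strictly increasing, $P_g^W$ is strictly increasing on $\reels^+$; equivalently, the positivity of $f_g^W$ in Eq.~\eqref{eqf_lemma_nrc1} gives the same conclusion. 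Continuity at $0$ follows from $\operatorname{erf}(+\infty)=1$, and $\lim_{t\to\infty}P_g^W(t)=1-\operatorname{erf}(0)=1$ (never attained), so $P_g^W$ is a continuous bijection $\reels^+\to[0,1)$ and a strictly increasing inverse $(P_g^W)^{-1}:[0,1)\to\reels^+$ exists with $(P_g^W)^{-1}(0)=0$.

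Next, for $0<t<1$, I would solve $P_g^W(s)=t$ explicitly. The displayed form above gives $\operatorname{erf}(g/\sqrt{2s})=1-t$, and since $1-t\in(0,1)$ lies in the domain of $\operatorname{erfinv}$ (cf.\ Eq.~\eqref{erfinv}), one obtains $g/\sqrt{2s}=\operatorname{erfinv}(1-t)$. Solving for $s$ yields Eq.~\eqref{PgWinvt}, namely
\[
(P_g^W)^{-1}(t) \;=\; \frac{g^{2}}{2\,\operatorname{erfinv}(1-t)^{2}}.
\]
For Eq.~\eqref{fP1}, I would substitute this value into the Levy pdf Eq.~\eqref{eqf_lemma_nrc1}. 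Writing $a:=\operatorname{erfinv}(1-t)$ and $s:=g^{2}/(2a^{2})$, a direct computation gives $s^{3/2}=g^{3}/(2\sqrt{2}\,a^{3})$ and $\exp(-g^{2}/(2s))=\exp(-a^{2})$, so
\[
f_g^W(s) \;=\; \frac{g}{\sqrt{2\pi s^{3}}}\,e^{-g^{2}/(2s)} \;=\; \frac{2\,a^{3}}{g^{2}\sqrt{\pi}}\,e^{-a^{2}},
\]
which is exactly Eq.~\eqref{fP1}. The boundary value $f_g^W((P_g^W)^{-1}(0))=f_g^W(0)=0$ is stated directly in Lemma~\ref{lemma_nrc1}.

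There is no genuine obstacle; the result is essentially a calculation once the cdf is expressed in terms of $\operatorname{erf}$. The only subtlety worth flagging is that the image of $P_g^W$ is $[0,1)$ rather than $[0,1]$, so the inverse is defined on $[0,1)$ only, and the indicator $\mathbf{1}_{\{0<F(t)<1\}}$ appearing in the downstream inverse FPT formulae is precisely what handles this domain restriction.
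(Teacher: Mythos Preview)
Your proof is correct and follows essentially the same approach as the paper: both rewrite $P_g^W$ via the identity $\Phi(x)=\tfrac12(1+\operatorname{erf}(x/\sqrt{2}))$ to obtain $P_g^W(t)=1-\operatorname{erf}(g/\sqrt{2t})$, invoke positivity of $f_g^W$ for strict monotonicity, and then invert and substitute into the Levy pdf. If anything, you spell out the algebraic manipulation and the substitution into Eq.~\eqref{eqf_lemma_nrc1} more explicitly than the paper, which simply asserts that these steps lead to Eqs.~\eqref{PgWinvt} and~\eqref{fP1}.
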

\noindent To define the inverse FPT problem, we introduce the set of survival cdfs. Since the stochastic process $Z$ is continuous and thus its quadratic variation $\langle Z\rangle$ is also continuous, we accordingly consider the set of continuous cdfs.
\begin{definition} \label{Fdef_nrc1}
A function $F:\reels^+ \rightarrow  [0,1]$ is a survival cdf if $F$ is nondecreasing, continuous, 
and satisfies $F(0) = 0$ and $\underset{t \rightarrow \infty}{\lim} F(t) =1$. 
\end{definition}
\noindent By Equation (\ref{eq_theorem_nrc1}) from Theorem \ref{theorem_nrc1}, we have that $P_g^Z (t)  = P_g^W \big(\langle Z\rangle_{t}\big) \text{ for } t \geq 0$.
We first give the definition of a solution in the inverse first-passage time problem of quadratic variation.
\begin{definition}
\label{def_solution_F_nrc1}
For any cdf $F$, we say that a nonrandom nondecreasing function $
\tilde{v}_{F} : \reels^+ \rightarrow \reels^+ $ which is the quadratic variation of a continuous local martingale $Z^F$, i.e.,
\begin{eqnarray}
\label{tildev_F_def_nrc1}
\langle Z^F\rangle_{t} &=& \tilde{v}_{F}(t) \text{ for } t \geq 0,
\end{eqnarray}
is solution if it satisfies
\begin{eqnarray}
\label{PgZF_nrc1}
P_g^{Z^F} (t) &=& F(t) \text{ for } t \geq 0.
\end{eqnarray}
\end{definition}
\noindent Equation (\ref{tildev_F_def_nrc1}) in Definition \ref{def_solution_F_nrc1} implicitly implies the existence of a continuous local martingale with quadratic variation $\tilde{v}_{F}$, but this is true since any standard Wiener process with time-changed $\frac{\tilde{v}_{F}(t)}{t}$ will have $\tilde{v}_{F}$ as quadratic variation. We now give the definition of the explicit solution.
\begin{definition}
\label{def_explicitsolution_F_nrc1}
For any cdf $F$, we say that a nonrandom nondecreasing function $v_F$ which is the quadratic variation of a continuous local martingale $Z^F$, i.e.,
\begin{eqnarray}
\label{v_F_def_nrc1}
\langle Z^F\rangle_{t} &=& v_{F}(t) \text{ for } t \geq 0,
\end{eqnarray} is an explicit solution if it is of the form
\begin{eqnarray}
\label{def_expsol_F_nrc1}
v_{F}(t) = &(P_g^W)^{-1}(F(t)) \mathbf{1}_{\{0 < F(t) < 1\}} & \text{ for } t \geq 0.
\end{eqnarray}
\end{definition}
\noindent If we substitute $(P_g^{W})^{-1}$ in Equation (\ref{def_expsol_F_nrc1}) with Equation (\ref{PgWinvt}) from Lemma \ref{lemma_inv_nrc1}, we can reexpress the explicit solution as 
\begin{eqnarray}
\label{expsoldefsimp_nrc1}
v_{F}(t) = & \frac{g^2}{2\operatorname{erfinv}(1-F(t))^2} \mathbf{1}_{\{0 < F(t) < 1\}} & \text{ for } t \geq 0.
\end{eqnarray}
We define the infimum value such that $F(t)$ is positive and the infimum value such that $F(t)$ equals unity as
  \begin{eqnarray}
 \label{defKF0}
 K_F^0 & = & \inf \{ t > 0 \text{ such that } F(t)>0 \} \text{ and }\\
 \label{defKF1}
 K_F^1 & = & \inf \{ t > 0 \text{ such that } F(t)=1 \}.
\end{eqnarray}
Let us give an assumption sufficient for $Z^{F}$ to satisfy Assumption \ref{assDDS_nrc1}.
\begin{assumption}
\label{assmain_F_nrc1} We assume that $K_F^1$ is not finite.
\end{assumption}
\noindent The following theorem states that under Assumption \ref{assmain_F_nrc1}, (a) $Z^F$ satisfies Assumption \ref{assDDS_nrc1} and (b) that any nondecreasing function is solution if and only if it is an explicit solution. The proof of (b) is based on substituting the left-hand side of Equation (\ref{PgZF_nrc1}) with Equations (\ref{eq_theorem_nrc1}) and (\ref{v_F_def_nrc1}) and then inverting on both sides of the equation to derive the explicit solution.
\begin{theorem}
\label{theorem_F_nrc1}
Under Assumption \ref{assmain_F_nrc1}, (a) $Z^{F}$ satisfies Assumption \ref{assDDS_nrc1} (b) (i) $v_F$ is a solution in the sense of Definition \ref{def_solution_F_nrc1} $\iff$ (ii) $v_F$ is an explicit solution in the sense of Definition \ref{def_explicitsolution_F_nrc1}.
\end{theorem}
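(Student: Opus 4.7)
My plan is to deduce (a) by exhibiting an explicit continuous local martingale with the prescribed quadratic variation, and to deduce (b) by inverting the identity of Theorem \ref{theorem_nrc1} via Lemma \ref{lemma_inv_nrc1}. For part (a), the candidate $v_F$ from Definition \ref{def_explicitsolution_F_nrc1} is continuous and nondecreasing on $\reels^+$, as a composition of the continuous nondecreasing $F$ with the strictly increasing continuous $(P_g^W)^{-1}$, and the indicator gives the correct value $0$ on $\{F=0\}$. Taking a standard Wiener process $B$ on an auxiliary space and setting $Z^F_t := B_{v_F(t)}$ yields a continuous local martingale with $Z^F_0=0$ and $\langle Z^F\rangle_t = v_F(t)$. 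To complete Assumption \ref{assDDS_nrc1} I need $v_F(\infty)=\infty$: Assumption \ref{assmain_F_nrc1} says $K_F^1=\infty$, so $F(t)<1$ on $\reels^+$ while $F(t)\to 1$, and Lemma \ref{lemma_inv_nrc1} gives $(P_g^W)^{-1}(s)=g^2/(2\,\operatorname{erfinv}(1-s)^2)\to\infty$ as $s\to 1^-$, which transfers to $v_F(t)\to\infty$.

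\textbf{Direction (ii)$\Rightarrow$(i).} Given the explicit form of $v_F$, part (a) furnishes Assumption \ref{assDDS_nrc1}, so Theorem \ref{theorem_nrc1} gives $P_g^{Z^F}(t) = P_g^W(v_F(t))$. On $\{0<F(t)<1\}$, the identity $v_F(t)=(P_g^W)^{-1}(F(t))$ combined with the definition of the inverse yields $P_g^W(v_F(t))=F(t)$. On $\{F(t)=0\}$, $v_F(t)=0$ and $P_g^W(0)=0$ from Lemma \ref{lemma_nrc1}, giving the same equality. By Assumption \ref{assmain_F_nrc1}, no finite $t$ satisfies $F(t)=1$, so Equation (\ref{PgZF_nrc1}) holds for every $t\geq 0$.

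\textbf{Direction (i)$\Rightarrow$(ii).} Conversely, assume $\tilde{v}_F$ is a solution associated with a continuous local martingale $Z^F$. The only nontrivial clause of Assumption \ref{assDDS_nrc1} left to verify is $\langle Z^F\rangle_\infty=\infty$, which I argue by contradiction. If $\tilde{v}_F(\infty)=M<\infty$, continuity of $P_g^W$ together with the identification $P_g^{Z^F}(t)=P_g^W(\tilde{v}_F(t))$ (obtained from Theorem \ref{theorem_nrc1} after, if necessary, passing to a standard extension of the probability space to apply the Dambis, Dubins--Schwarz theorem) would give $\lim_{t\to\infty}P_g^{Z^F}(t)=P_g^W(M)<1$, contradicting $F(\infty)=1$. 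Once Assumption \ref{assDDS_nrc1} is in hand, Theorem \ref{theorem_nrc1} gives $F(t)=P_g^W(\tilde{v}_F(t))$. On $\{0<F(t)<1\}$, Lemma \ref{lemma_inv_nrc1} allows inversion to $\tilde{v}_F(t)=(P_g^W)^{-1}(F(t))$; on $\{F(t)=0\}$, strict monotonicity of $P_g^W$ together with $P_g^W(0)=0$ forces $\tilde{v}_F(t)=0$. Hence $\tilde{v}_F$ coincides with $v_F$ pointwise.

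\textbf{Main obstacle.} The delicate step is the direction (i)$\Rightarrow$(ii), where one must first upgrade the abstract solution hypothesis to Assumption \ref{assDDS_nrc1} before invoking Theorem \ref{theorem_nrc1}. This is precisely where Assumption \ref{assmain_F_nrc1} is essential: it prevents $F$ from ever plateauing at $1$ on a finite interval and thereby forces the associated quadratic variation to diverge, so that the Dambis, Dubins--Schwarz machinery underlying Theorem \ref{theorem_nrc1} applies to $Z^F$.
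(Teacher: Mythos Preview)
Your proof is correct and follows essentially the same route as the paper: part (a) via the explicit form of $(P_g^W)^{-1}$ from Lemma~\ref{lemma_inv_nrc1} to show $v_F(t)\to\infty$, and part (b) via Theorem~\ref{theorem_nrc1} combined with inversion through Lemma~\ref{lemma_inv_nrc1}. The paper packages (a) and (b) into two auxiliary propositions (one for each), but the underlying arguments are identical to yours for (a) and for the direction (ii)$\Rightarrow$(i).

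Where you differ is in the direction (i)$\Rightarrow$(ii). The paper simply takes ``$v_F$ satisfies Assumption~\ref{assDDS_nrc1}'' as a hypothesis in its auxiliary proposition and then invokes (a) to supply it---but (a) only establishes Assumption~\ref{assDDS_nrc1} for the \emph{explicit} $v_F$, not for an arbitrary solution $\tilde v_F$. You notice this and close the gap by a contradiction argument using the extended Dambis--Dubins--Schwarz theorem (valid even when $\langle Z\rangle_\infty<\infty$, on an enlarged space), which yields $P_g^{Z^F}(t)=P_g^W(\tilde v_F(t))$ regardless, and hence forces $\tilde v_F(\infty)=\infty$ since $F(t)\to 1$ while $P_g^W(M)<1$ for finite $M$. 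This is a genuine refinement: your argument makes the uniqueness direction self-contained, whereas the paper's presentation leaves the verification of Assumption~\ref{assDDS_nrc1} for an arbitrary solution implicit.
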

\noindent In what follows, we consider the particular case when the quadratic variation $\langle Z\rangle$ is absolutely continuous.  We accordingly consider the set of absolutely continuous cdfs.
\begin{definition} \label{fdef_nrc1}
A function $f:\reels^+ \rightarrow \reels^+$ is a survival pdf if it satifies
\begin{eqnarray}
\label{Ffrelation_nrc1}
F(t) & = &\int_0^t f(s) ds \text{ for } t \geq 0.
\end{eqnarray}
\end{definition}
\noindent We give the definition of a solution in the inverse first-passage time problem of absolutely continuous quadratic variation. Since the quadratic variation is absolutely continuous, we can consider its derivative in the solution definition.
\begin{definition}
\label{def_solution_f_nrc1}
For any pdf $f$, we say that a variance function $\widetilde{\sigma}_{f}^2 : \reels^+ \rightarrow \reels^+ $ which is the quadratic variation  derivative of a continuous local martingale $Z^f$, i.e.,
\begin{eqnarray}
\label{tildesigma_f_def_nrc1}
\langle Z^f\rangle_{t} &=& \int_0^t \widetilde{\sigma}_{s,f}^2 ds \text{ for } t \geq 0,
\end{eqnarray}
is solution if it satisfies
\begin{eqnarray}
\label{PgZf_nrc1}
P_g^{Z^f} (t) &=& F(t) \text{ for } t \geq 0.
\end{eqnarray}
\end{definition}
\noindent Equation (\ref{tildesigma_f_def_nrc1}) in Definition \ref{def_solution_f_nrc1} implicitly implies the existence of a continuous local martingale with quadratic variation $\int_0^t \widetilde{\sigma}_{s,f}^2 ds$, but this is true since we can consider It\^{o} processes from Example \ref{example_ito_nrc1}. We now give the definition of the explicit solution.
\begin{definition}
\label{def_explicitsolution_f_nrc1}
For any pdf $f$, we say that a variance function $\sigma_{f}^2 : \reels^+ \rightarrow \reels^+ $ which is the quadratic variation  derivative of a continuous local martingale $Z^f$, i.e.,
\begin{eqnarray}
\label{sigma_f_def_nrc1}
\langle Z^f\rangle_{t} &=& \int_0^t \sigma_{s,f}^2 ds \text{ for } t \geq 0,
\end{eqnarray} 
is an explicit solution if it is of the form
\begin{eqnarray}
\label{def_expsol_f_nrc1}
\sigma_{t,f}^2 = &\frac{f(t)}{f_g^W((P_g^W)^{-1}(F(t)))} \mathbf{1}_{\{0 < F(t) < 1\}} & \text{ almost everywhere for } t \geq 0 .
\end{eqnarray}
\end{definition}
\noindent We introduce the notation $h(t)=\operatorname{erfinv}\left\{1-F(t)\right\}$. If we substitute $(P_g^{W})^{-1}$ in Equation (\ref{def_expsol_f_nrc1}) with Equation (\ref{PgWinvt}) from Lemma \ref{lemma_inv_nrc1}, we can reexpress the explicit solution as 
\begin{eqnarray}
\label{volsimp0}
\sigma_{t,f}^2 = & \frac{f(t)}{\frac{2}{g^2\sqrt{\pi}}{h(t)}^3e^{-{h(t)}^2}} \mathbf{1}_{\{0 < F(t) < 1\}} & \text{ for } t \geq 0.
\end{eqnarray}
Let us give a set of assumptions sufficient for $Z^{f}$ to satisfy Assumption \ref{assDDS_nrc1}.
\begin{assumption}
\label{assmain_f_nrc1} We assume that there exists $\eta_F^0 > 0$ s.t. the explicit solution is locally integrable on $[K_F^0,K_F^0+\eta_F^0]$, i.e.,
\begin{eqnarray}
\label{assumptionvol1}
\sigma_{f}^2 \restriction_{[K_F^0, K_F^0 + \eta_F^0]} \in L_{1,loc}\big([K_F^0, K_F^0 + \eta_F^0]\big).
\end{eqnarray}
Moreover, we assume that $K_F^1$ is not finite.
\end{assumption}
\noindent The following theorem in the particular case when the quadratic variation $\langle Z\rangle$ is absolutely continuous states that under Assumption \ref{assmain_f_nrc1}, (a) $Z^f$ satisfies Assumption \ref{assDDS_nrc1} and (b) that any variance function is solution if and only if it is an explicit solution. The proof of (a) is mainly based on topological argument in $\reels^+$ and the use of Assumption \ref{assmain_f_nrc1}. The proof of (b) is based on substituting the left-hand side of Equation (\ref{PgZf_nrc1}) with Equations (\ref{eq_theorem_nrc1}) from Theorem \ref{theorem_nrc1} and (\ref{sigma_f_def_nrc1}) and then differentiating and inverting on both sides of the equation to derive the explicit solution.
\begin{theorem}
\label{theorem_f_nrc1}
Under Assumption \ref{assmain_f_nrc1}, (a) $Z^f$ satisfies Assumption \ref{assDDS_nrc1} (b) (i) $\sigma_f^2$ is a solution in the sense of Definition \ref{def_solution_f_nrc1} $\iff$ (ii) $\sigma_f^2$ is an explicit solution in the sense of Definition \ref{def_explicitsolution_f_nrc1}.
\end{theorem}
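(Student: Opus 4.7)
The plan is to prove (a) by combining the structural form of the explicit formula with Assumption \ref{assmain_f_nrc1}, and to prove (b) as a short manipulation using Theorem \ref{theorem_nrc1} together with the inverse function theorem applied to the strictly increasing function $P_g^W$.

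For (a), I first observe that $\sigma_{t,f}^2=0$ for $t \in [0,K_F^0)$ by definition of $K_F^0$ and the indicator $\mathbf{1}_{\{0<F(t)<1\}}$, so local integrability on that region is immediate. Because $P_g^W$ is strictly increasing and smooth on $\reels^+$ (Lemma \ref{lemma_nrc1} and Lemma \ref{lemma_inv_nrc1}) and $F$ is absolutely continuous with density $f$, the inverse function theorem yields
\begin{equation*}
\frac{d}{dt}(P_g^W)^{-1}(F(t)) \;=\; \frac{f(t)}{f_g^W((P_g^W)^{-1}(F(t)))},
\end{equation*}
which is exactly the right-hand side of (\ref{def_expsol_f_nrc1}). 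Hence $(P_g^W)^{-1}\circ F$ is absolutely continuous on compact subsets of $(K_F^0,\infty)$, where it stays finite because $K_F^1=\infty$; this produces $\sigma_f^2 \in L_{1,\mathrm{loc}}((K_F^0,\infty))$. Combining with the local integrability on $[K_F^0,K_F^0+\eta_F^0]$ from Assumption \ref{assmain_f_nrc1} closes the gap at the left endpoint and gives $\sigma_f^2 \in L_{1,\mathrm{loc}}(\reels^+)$, i.e., $\sigma_f \in L_{2,\mathrm{loc}}(\reels^+)$. Example \ref{example_ito_nrc1} then produces $Z^f = \int_0^\cdot \sigma_{s,f}\, dW_s$ as a continuous local martingale with nonrandom quadratic variation $\int_0^t \sigma_{s,f}^2\, ds = (P_g^W)^{-1}(F(t))$. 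Since $K_F^1=\infty$ forces $F(t) \to 1$ and hence $(P_g^W)^{-1}(F(t)) \to \infty$ as $t \to \infty$, Assumption \ref{assDDS_nrc1} holds.

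For (b), I plug the conclusion of (a) into Theorem \ref{theorem_nrc1} applied to $Z^f$ to get $P_g^{Z^f}(t) = P_g^W\!\big(\int_0^t \sigma_{s,f}^2\, ds\big)$. For $(i)\Rightarrow(ii)$, equating this with (\ref{PgZf_nrc1}) gives $P_g^W\!\big(\int_0^t \sigma_{s,f}^2\, ds\big)=F(t)$; inverting on the set $\{0<F(t)<1\}$ yields $\int_0^t \sigma_{s,f}^2\, ds = (P_g^W)^{-1}(F(t))$, and differentiating in $t$ together with the inverse function identity above reproduces (\ref{def_expsol_f_nrc1}) almost everywhere. For $(ii)\Rightarrow(i)$, the explicit formula integrates by the computation in (a) to $(P_g^W)^{-1}(F(t))$, and substituting into Theorem \ref{theorem_nrc1} gives $P_g^{Z^f}(t) = P_g^W((P_g^W)^{-1}(F(t))) = F(t)$.

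The main obstacle is the behavior at the left boundary $t = K_F^0$: as $F(t)\downarrow 0$, one has $(P_g^W)^{-1}(F(t)) \downarrow 0$, and since $f_g^W(0)=0$ by Lemma \ref{lemma_nrc1}, the integrand $\frac{f(t)}{f_g^W((P_g^W)^{-1}(F(t)))}$ can blow up. Assumption \ref{assmain_f_nrc1} is precisely the hypothesis that tames this singularity and makes the Itô construction well-posed. Because $K_F^1=\infty$, the companion boundary behavior at $F(t)=1$ does not arise for finite $t$ and requires no separate treatment.
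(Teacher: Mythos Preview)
Your proof is correct and follows the same overall architecture as the paper: part (a) establishes local integrability of $\sigma_f^2$, invokes the It\^o construction (Example \ref{example_ito_nrc1}), and verifies $\langle Z^f\rangle_\infty=\infty$; part (b) combines Theorem \ref{theorem_nrc1} with inversion and differentiation exactly as in the paper's Proposition \ref{propiii_f_nrc1}.

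The one notable difference is in how you obtain $\sigma_f^2\in L_{1,\mathrm{loc}}((K_F^0,\infty))$. The paper argues that on any compact $[K_0,K_1]\subset(K_F^0,\infty)$ the range of $(P_g^W)^{-1}\!\circ F$ is a compact interval in $(0,\infty)$, on which $f_g^W$ is continuous and strictly positive, so the denominator $f_g^W((P_g^W)^{-1}(F(t)))$ is bounded below by a positive constant; integrability then follows because $f$ is a pdf. You instead recognize $\sigma_f^2$ as the almost-everywhere derivative of the absolutely continuous function $(P_g^W)^{-1}\!\circ F$ on such compacts and invoke the fundamental theorem of calculus for AC functions. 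Your route is slightly more streamlined and has the bonus of directly yielding $\int_0^t\sigma_{s,f}^2\,ds=(P_g^W)^{-1}(F(t))$, which you reuse in (b); the paper's route makes the role of the strict positivity of $f_g^W$ on $(0,\infty)$ more visible. One small point worth making explicit in your write-up: the identity $\int_0^t\sigma_{s,f}^2\,ds=(P_g^W)^{-1}(F(t))$ across the endpoint $K_F^0$ relies on both sides being continuous there (value $0$), since the AC chain-rule argument only runs on compacts of $(K_F^0,\infty)$.
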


\subsection{Two-sided (i) case}
The first lemma shows that there exists an invert of $P_{g,h}^W$ which we denote $(P_{g,h}^W)^{-1}$ and is strictly increasing such that 
$(P_{g,h}^{W})^{-1}(0)=0$ and $(P_{g,h}^{W})^{-1}(1)=\infty$, all of which are new results which will be useful to prove the explicit solution of the inverse problem. The proof relies on Lemma \ref{lemma_nrc2}.
\begin{lemma}
\label{lemma_inv_nrc2} There exists an invert of $P_{g,h}^W$ which we denote $(P_{g,h}^W)^{-1} : [0,1) \rightarrow \reels^+$ and is strictly increasing such that 
$(P_{g,h}^{W})^{-1}(0)=0$ and $(P_{g,h}^{W})^{-1}(1)=\infty$.
\end{lemma}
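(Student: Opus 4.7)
The plan is to establish four properties of $P_{g,h}^W$ that together imply the claim: continuity on $\reels^+$, strict monotonicity, $P_{g,h}^W(0)=0$, and $\lim_{t\to\infty}P_{g,h}^W(t)=1$. Once these are in hand, a standard real-analysis argument yields a continuous, strictly increasing inverse $(P_{g,h}^W)^{-1}:[0,1)\to\reels^+$ with $(P_{g,h}^W)^{-1}(0)=0$ and $(P_{g,h}^W)^{-1}(t)\to\infty$ as $t\to 1^{-}$, which is the meaning attached to $(P_{g,h}^W)^{-1}(1)=\infty$.

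The starting point $P_{g,h}^W(0)=0$ is given directly by Lemma \ref{lemma_nrc2}. Continuity on $\reels^+$ is cleanest via the probabilistic interpretation (\ref{PgZdef_nrc2}): the FPT $\Tau_{g,h}^W$ admits the pdf (\ref{eqf_lemma_nrc2}), so its cdf is absolutely continuous and carries no atoms, which yields continuity on $(0,\infty)$; continuity at $0$ follows from $\Tau_{g,h}^W>0$ almost surely. The limit $\lim_{t\to\infty}P_{g,h}^W(t)=1$ comes from the fact that a Wiener process almost surely exits any bounded strip $(h,g)$ in finite time, so $\proba(\Tau_{g,h}^W<\infty)=1$.

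For strict monotonicity, my preferred route is probabilistic. For $0\leq s<t$, write $P_{g,h}^W(t)-P_{g,h}^W(s)=\proba(s<\Tau_{g,h}^W\leq t)$. On the event $\{\Tau_{g,h}^W>s\}$, the Wiener process at time $s$ lies strictly inside the open strip $(h,g)$; by the strong Markov property, from any such interior point the process exits the strip within the remaining time $t-s$ with strictly positive probability, so the increment of $P_{g,h}^W$ is positive. A purely analytic alternative would be to argue that the pdf in (\ref{eqf_lemma_nrc2}) is strictly positive on $(0,\infty)$ and integrate, but chasing signs in the doubly-infinite $\mathrm{ss}_{t}$ series is less transparent than the Markov argument.

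Putting the four properties together, $P_{g,h}^W:\reels^+\to[0,1)$ is a continuous, strictly increasing bijection, so the inverse $(P_{g,h}^W)^{-1}$ exists as a continuous, strictly increasing map onto $\reels^+$, with the boundary values following by passing to the limit in the definition of the inverse. The main, if minor, obstacle is the strict monotonicity step: this is the place where the closed-form expressions from Lemma \ref{lemma_nrc2} alone are not the most convenient tool, and it is simplest to fall back on the Markov structure of $W$.
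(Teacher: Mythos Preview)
Your proof is correct and covers all the needed ingredients. The route differs from the paper's in two places. For strict monotonicity, the paper simply asserts that $f_{g,h}^W(t)>0$ for all $t>0$ by reference to the explicit series (\ref{eqf_lemma_nrc2}) and integrates; you instead use the strong Markov property to show that $\proba(s<\Tau_{g,h}^W\le t)>0$ directly. Your approach sidesteps the sign analysis of the doubly-infinite $\mathrm{ss}_t$ series, which the paper does not actually carry out, so in that sense your argument is more self-contained. For the boundary value $(P_{g,h}^W)^{-1}(1)=\infty$, the paper invokes ``algebraic manipulation'' of the explicit cdf (\ref{eqP_lemma_nrc2}), whereas you argue probabilistically that $\proba(\Tau_{g,h}^W<\infty)=1$ because Brownian motion almost surely exits any bounded strip; both are short, but yours again avoids manipulating the series. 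The paper's approach has the advantage of staying purely within the explicit formulas of Lemma \ref{lemma_nrc2}, while yours relies on standard path properties of $W$ and would work even without those closed forms.
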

\noindent By Equation (\ref{eq_theorem_nrc2}) from Theorem \ref{theorem_nrc2}, we have that $P_{g,h}^Z (t)  = P_{g,h}^W \big(\langle Z\rangle_{t}\big) \text{ for } t \geq 0$.
We first give the definition of a solution in the inverse first-passage time problem of quadratic variation.
\begin{definition}
\label{def_solution_F_nrc2}
For any cdf $F$, we say that a nonrandom nondecreasing function $
\tilde{v}_{F} : \reels^+ \rightarrow \reels^+ $ which is the quadratic variation of a continuous local martingale $Z^F$, i.e.,
\begin{eqnarray}
\label{tildev_F_def_nrc2}
\langle Z^F\rangle_{t} &=& \tilde{v}_{F}(t) \text{ for } t \geq 0,
\end{eqnarray}
is solution if it satisfies
\begin{eqnarray}
\label{PgZF_nrc2}
P_{g,h}^{Z^F} (t) &=& F(t) \text{ for } t \geq 0.
\end{eqnarray}
\end{definition}
\noindent We now give the definition of the explicit solution.
\begin{definition}
\label{def_explicitsolution_F_nrc2}
For any cdf $F$, we say that a nonrandom nondecreasing function $v_F$ which is the quadratic variation of a continuous local martingale $Z^F$, i.e.,
\begin{eqnarray}
\label{v_F_def_nrc2}
\langle Z^F\rangle_{t} &=& v_{F}(t) \text{ for } t \geq 0,
\end{eqnarray} 
is an explicit solution if it is of the form
\begin{eqnarray}
\label{def_expsol_F_nrc2}
v_{F}(t) = &(P_{g,h}^W)^{-1}(F(t)) \mathbf{1}_{\{0 < F(t) < 1\}} & \text{ for } t \geq 0.
\end{eqnarray}
\end{definition}
\noindent The following theorem states that under Assumption \ref{assmain_F_nrc1}, (a) $Z^F$ satisfies Assumption \ref{assDDS_nrc1} and (b) that any nondecreasing function is solution if and only if it is an explicit solution. The proof of (b) is based on substituting the left-hand side of Equation (\ref{PgZF_nrc2}) with Equations (\ref{eq_theorem_nrc2}) and (\ref{v_F_def_nrc2}) and then inverting on both sides of the equation to derive the explicit solution.
\begin{theorem}
\label{theorem_F_nrc2}
Under Assumption \ref{assmain_F_nrc1}, (a) $Z^{F}$ satisfies Assumption \ref{assDDS_nrc1} (b) (i) $v_F$ is a solution in the sense of Definition \ref{def_solution_F_nrc2} $\iff$ (ii) $v_F$ is an explicit solution in the sense of Definition \ref{def_explicitsolution_F_nrc2}.
\end{theorem}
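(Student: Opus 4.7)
The plan is to verify (a) first so that Assumption \ref{assDDS_nrc1} is available, then use Theorem \ref{theorem_nrc2} to translate the defining identity $P_{g,h}^{Z^F}(t) = F(t)$ into $P_{g,h}^W(v_F(t)) = F(t)$, and invert pointwise via Lemma \ref{lemma_inv_nrc2} to obtain the equivalence in (b). The structure closely parallels the one-sided case (Theorem \ref{theorem_F_nrc1}), but with two-sided inverses in place of the explicit $\operatorname{erfinv}$ expression.

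For (a), I would treat the two directions separately. When $v_F$ is the explicit solution, monotonicity of $v_F$ is immediate from the strict monotonicity of $F$ combined with the strict monotonicity of $(P_{g,h}^W)^{-1}$ granted by Lemma \ref{lemma_inv_nrc2}. Assumption \ref{assmain_F_nrc1} forces $F(t) < 1$ for all finite $t$ and $F(t) \to 1$, so Lemma \ref{lemma_inv_nrc2} gives $v_F(t) \to \infty$; hence any continuous local martingale $Z^F$ with $\langle Z^F\rangle = v_F$ (which exists, e.g., as a time-changed Brownian motion) meets Assumption \ref{assDDS_nrc1}. When $\tilde v_F$ comes from a solution, I argue by contradiction: if $\tilde v_F(\infty) < \infty$, then by applying DDS on an enlargement of the stochastic basis carrying an independent Brownian motion, one can still write $Z^F_t = B_{\tilde v_F(t)}$, so that
\[
\sup_{t \geq 0} P_{g,h}^{Z^F}(t) \;\leq\; P_{g,h}^W(\tilde v_F(\infty)) \;<\; 1,
\]
because the Wiener boundary crossing probability is strictly less than one on any bounded interval. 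This contradicts $F(t) \to 1$, so $\tilde v_F(\infty) = \infty$ and Assumption \ref{assDDS_nrc1} holds.

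For (b), having (a) in hand, Theorem \ref{theorem_nrc2} reduces (\ref{PgZF_nrc2}) to $P_{g,h}^W(v_F(t)) = F(t)$. In the direction (ii) $\Rightarrow$ (i), I verify this identity by cases: if $F(t) = 0$ the indicator in Definition \ref{def_explicitsolution_F_nrc2} vanishes, so $v_F(t) = 0$ and $P_{g,h}^W(0) = 0$ by Lemma \ref{lemma_nrc2}; if $0 < F(t) < 1$, substitution gives $P_{g,h}^W\bigl((P_{g,h}^W)^{-1}(F(t))\bigr) = F(t)$; the case $F(t) = 1$ does not arise by Assumption \ref{assmain_F_nrc1}. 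In the direction (i) $\Rightarrow$ (ii), on $\{0 < F(t) < 1\}$ I invert using Lemma \ref{lemma_inv_nrc2} to obtain $\tilde v_F(t) = (P_{g,h}^W)^{-1}(F(t))$, while on $\{F(t) = 0\}$ the strict monotonicity of $P_{g,h}^W$ together with $P_{g,h}^W(0) = 0$ forces $\tilde v_F(t) = 0$. In both regimes $\tilde v_F(t)$ matches the explicit formula in Definition \ref{def_explicitsolution_F_nrc2}.

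The main obstacle is the forward part of (a): showing that a solution $\tilde v_F$ must have $\tilde v_F(\infty) = \infty$. The identity $P_{g,h}^{Z^F}(t) = F(t)$ does not by itself yield this, and the argument must appeal to a version of DDS valid when $\langle Z^F\rangle_\infty$ may be finite (hence the extension of the filtration) together with the strict bound $P_{g,h}^W(v) < 1$ for every finite $v$. Once this is in place, Theorem \ref{theorem_nrc2} applies and the remainder of the proof is a matter of substitution and pointwise inversion.
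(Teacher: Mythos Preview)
Your proposal is correct and follows the same route as the paper: establish Assumption~\ref{assDDS_nrc1} for $Z^F$, invoke Theorem~\ref{theorem_nrc2} to reduce \eqref{PgZF_nrc2} to $P_{g,h}^W(v_F(t))=F(t)$, then invert pointwise using Lemma~\ref{lemma_inv_nrc2}. The paper packages these two steps as Proposition~\ref{prop_ass_F_nrc2} and Proposition~\ref{propiii_F_nrc2} respectively, and the proof of the theorem simply cites them.

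One point where you go further than the paper: in part~(a) the paper's Proposition~\ref{prop_ass_F_nrc2} only verifies $\langle Z^F\rangle_\infty=\infty$ for the \emph{explicit} $v_F$ of Definition~\ref{def_explicitsolution_F_nrc2}, and then uses that to feed the hypothesis of Proposition~\ref{propiii_F_nrc2}. You additionally argue, via the extended Dambis--Dubins--Schwarz representation on an enlarged basis and the strict inequality $P_{g,h}^W(v)<1$ for finite $v$, that an arbitrary solution $\tilde v_F$ must also have $\tilde v_F(\infty)=\infty$. This closes a step the paper leaves implicit in the direction (i)$\Rightarrow$(ii), where Theorem~\ref{theorem_nrc2} is applied to a $Z^F$ that is only assumed to be a solution. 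Your contradiction argument is the natural way to justify that application; the paper simply takes it for granted.
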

\noindent In what follows, we consider the particular case when the quadratic variation $\langle Z\rangle$ is absolutely continuous. We give the definition of a solution in the inverse first-passage time problem of absolutely continuous quadratic variation. Since the quadratic variation is absolutely continuous, we can consider its derivative in the solution definition. 
\begin{definition}
\label{def_solution_f_nrc2}
For any pdf $f$, we say that a variance function $\widetilde{\sigma}_{f}^2 : \reels^+ \rightarrow \reels^+ $ which is the quadratic variation  derivative of a continuous local martingale $Z^f$, i.e.,
\begin{eqnarray}
\label{tildesigma_f_def_nrc2}
\langle Z^f\rangle_{t} &=& \int_0^t \widetilde{\sigma}_{s,f}^2 ds \text{ for } t \geq 0,
\end{eqnarray}
is solution if it satisfies
\begin{eqnarray}
\label{PgZf_nrc2}
P_{g,h}^{Z^f} (t) &=& F(t) \text{ for } t \geq 0.
\end{eqnarray}
\end{definition}
\noindent We now give the definition of the explicit solution.
\begin{definition}
\label{def_explicitsolution_f_nrc2}
For any pdf $f$, we say that a variance function $\sigma_{f}^2 : \reels^+ \rightarrow \reels^+ $ which is the quadratic variation  derivative of a continuous local martingale $Z^f$, i.e.,
\begin{eqnarray}
\label{sigma_f_def_nrc2}
\langle Z^f\rangle_{t} &=& \int_0^t \sigma_{s,f}^2 ds \text{ for } t \geq 0,
\end{eqnarray} 
is an explicit solution if it is of the form
\begin{eqnarray}
\label{def_expsol_f_nrc2}
\sigma_{t,f}^2 = &\frac{f(t)}{f_{g,h}^W((P_{g,h}^W)^{-1}(F(t)))} \mathbf{1}_{\{0 < F(t) < 1\}} & \text{ almost everywhere for } t \geq 0 .
\end{eqnarray}
\end{definition}
\noindent Let us give a set of assumptions sufficient for $Z^{f}$ to satisfy Assumption \ref{assDDS_nrc1}.
\begin{assumption}
\label{assmain_f_nrc2} We assume that the explicit solution is locally integrable in $K_F^0$, i.e., there exists $\eta_F^0 > 0$ s.t.
\begin{eqnarray}
\label{assumptionvol1_nrc2}
\sigma_{f}^2 \restriction_{[K_F^0, K_F^0 + \eta_F^0]} \in L_{1}\big([K_F^0, K_F^0 + \eta_F^0]\big).
\end{eqnarray}
Moreover, we also assume that $K_F^1$ is not finite.
\end{assumption}
\noindent The following theorem in the particular case when the quadratic variation $\langle Z\rangle$ is absolutely continuous states that under Assumption \ref{assmain_f_nrc2}, (a) $Z^f$ satisfies Assumption \ref{assDDS_nrc1} and (b) that any variance function is solution if and only if it is an explicit solution. The proof of (a) is mainly based on topological argument in $\reels^+$ and the use of Assumption \ref{assmain_f_nrc2}. The proof of (b) is based on substituting the left-hand side of Equation (\ref{PgZf_nrc2}) with Equations (\ref{eq_theorem_nrc2}) from Theorem \ref{theorem_nrc2} and (\ref{sigma_f_def_nrc2}) and then differentiating and inverting on both sides of the equation to derive the explicit solution.
\begin{theorem}
\label{theorem_f_nrc2}
Under Assumption \ref{assmain_f_nrc2}, (a) $Z^f$ satisfies Assumption \ref{assDDS_nrc1} (b) (i) $\sigma_f^2$ is a solution in the sense of Definition \ref{def_solution_f_nrc2} $\iff$ (ii) $\sigma_f^2$ is an explicit solution in the sense of Definition \ref{def_explicitsolution_f_nrc2}.
\end{theorem}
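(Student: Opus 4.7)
The plan is to mirror the proof strategy of Theorem \ref{theorem_f_nrc1}, substituting the one-sided objects $P_g^W$, $f_g^W$, $(P_g^W)^{-1}$ with their two-sided counterparts $P_{g,h}^W$, $f_{g,h}^W$, $(P_{g,h}^W)^{-1}$ supplied by Lemma \ref{lemma_nrc2} and Lemma \ref{lemma_inv_nrc2}, and invoking Theorem \ref{theorem_nrc2} and Corollary \ref{corollary_nrc2} wherever the one-sided versions were used. The key algebraic identity that drives the whole argument is that if $u(t) := (P_{g,h}^W)^{-1}(F(t))$, then by the inverse function rule and the fact that $(P_{g,h}^W)' = f_{g,h}^W$, one has $u'(t) = f(t)/f_{g,h}^W(u(t))$, which is exactly $\sigma_{t,f}^2$ on $\{0<F(t)<1\}$. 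This bridges the explicit solution form and the solution definition.

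For part (a), I would verify that $\sigma_{f}^2$ is locally integrable on all of $\reels^+$ and that $\int_0^\infty \sigma_{s,f}^2\,ds = \infty$. I would split $[0,t]$ into three pieces. On $[0,K_F^0)$ the indicator in \eqref{def_expsol_f_nrc2} kills the integrand. On $[K_F^0,K_F^0+\eta_F^0]$ local integrability is Assumption \ref{assmain_f_nrc2} directly. On $[K_F^0+\eta_F^0,t]$ with $t<\infty$, since $K_F^1=\infty$ we have $F$ taking values in a compact subinterval of $(0,1)$, so $(P_{g,h}^W)^{-1}\circ F$ takes values in a compact subinterval of $(0,\infty)$ by strict monotonicity and continuity of $(P_{g,h}^W)^{-1}$ (Lemma \ref{lemma_inv_nrc2}); then $f_{g,h}^W$, which is continuous and positive on $(0,\infty)$ by Lemma \ref{lemma_nrc2}, is bounded below on that compact set, so $\sigma_{s,f}^2 \leq C f(s)$ on that piece and $f\in L^1$. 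Divergence at infinity then follows because, once integrability is known, the identity $v_f(t)=(P_{g,h}^W)^{-1}(F(t))$ (from the inverse-function computation) gives $v_f(t)\to (P_{g,h}^W)^{-1}(1^-)=\infty$ by Lemma \ref{lemma_inv_nrc2}. Combined with $Z_0^f=0$ and the It\^{o} construction of Example \ref{example_ito_nrc1} adapted to the two-sided case, this yields Assumption \ref{assDDS_nrc1}.

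For part (b), I would argue both implications separately. Direction $(ii)\Rightarrow(i)$: let $v_f(t)=\int_0^t\sigma_{s,f}^2\,ds$ with $\sigma_{t,f}^2$ given by \eqref{def_expsol_f_nrc2}. The calculation just described shows $v_f(t)=(P_{g,h}^W)^{-1}(F(t))\mathbf{1}_{\{0<F(t)<1\}}$, and applying Theorem \ref{theorem_nrc2} yields
\begin{equation*}
P_{g,h}^{Z^f}(t)=P_{g,h}^W\bigl(\langle Z^f\rangle_t\bigr)=P_{g,h}^W\bigl((P_{g,h}^W)^{-1}(F(t))\bigr)=F(t),
\end{equation*}
so \eqref{PgZf_nrc2} holds. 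Direction $(i)\Rightarrow(ii)$: starting from $P_{g,h}^{Z^f}(t)=F(t)$, Theorem \ref{theorem_nrc2} gives $P_{g,h}^W(\langle Z^f\rangle_t)=F(t)$; inverting via Lemma \ref{lemma_inv_nrc2} on $\{0<F(t)<1\}$ produces $\langle Z^f\rangle_t=(P_{g,h}^W)^{-1}(F(t))$. Since both $F$ and $\langle Z^f\rangle$ are absolutely continuous, differentiating both sides almost everywhere and using $(P_{g,h}^W)^{-1{}'}=1/f_{g,h}^W\circ (P_{g,h}^W)^{-1}$ gives \eqref{def_expsol_f_nrc2} a.e.

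The main obstacle I anticipate is part (a): one needs to be sure that $f_{g,h}^W$ stays bounded away from zero on every compact subinterval of $(0,\infty)$ and that the strict monotonicity and continuity of $(P_{g,h}^W)^{-1}$ extracted from Lemma \ref{lemma_inv_nrc2} suffice to propagate local integrability from the single neighborhood $[K_F^0,K_F^0+\eta_F^0]$ to all of $\reels^+$. The series representation \eqref{eqf_lemma_nrc2} of $f_{g,h}^W$ will need to be used to rule out zeros of $f_{g,h}^W$ on $(0,\infty)$; the remainder of the proof is essentially bookkeeping once this positivity and the inverse function computation are in hand.
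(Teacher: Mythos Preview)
Your proposal is correct and follows essentially the same approach as the paper: the paper also splits part (a) into a Proposition establishing local integrability via the three-piece decomposition $[K_0,K_F^0]\cup[K_F^0,K_F^0+\eta_F^0]\cup[K_F^0+\eta_F^0,K_1]$ with the same compactness argument to bound $f_{g,h}^W$ below, and part (b) into a separate Proposition using Theorem \ref{theorem_nrc2}, inversion via Lemma \ref{lemma_inv_nrc2}, and differentiation via the inverse function theorem, exactly as you outline. Your anticipated obstacle about the positivity of $f_{g,h}^W$ on compact subintervals of $(0,\infty)$ is indeed the only nontrivial ingredient, and the paper handles it in the same way you suggest, by appealing to the series representation \eqref{eqf_lemma_nrc2}.
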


\subsection{One-sided (ii) case} 
\noindent To define the inverse FPT problem, we introduce the set of random survival cdfs. Since the stochastic process $Y$ has its quadratic variation $\langle Y\rangle$ which is continuous and random, we accordingly consider the set of random continuous cdfs.
\begin{definition} \label{Fdef_rc1}
A function $F:\reels^+ \times \Omega \rightarrow  [0,1]$ is a random survival cdf if $F(\omega)$ is nondecreasing, continuous, 
and satisfies $F(0,\omega) = 0$ and $\underset{t \rightarrow \infty}{\lim} F(t,\omega) =1$ for $\omega \in \Omega$. 
\end{definition}
\noindent By Equation (\ref{eq_theorem_rc1}) from Theorem \ref{theorem_rc1}, we have $P_1^Y (t)  =  \int_{\mathcal{I}(\reels^+,\reels^+)} P_1^W \big(y_t\big) dF_{\langle Y\rangle}(y) \text{ for } t \geq 0$. We first give the definition of a solution in the inverse first-passage time problem of quadratic variation. For any $\mathcal{F}$-adapted continuous stochastic process $(Z_t)_{t \in \reels^+}$ started at 0, i.e., with $Z_0=0$, we define the regular conditional cdf of $\Tau_g^Z$ as $P_g^Z(|\omega)$.
\begin{definition}
\label{def_solution_F_rc1}
For any random cdf $F$, we say that a  nondecreasing stochastic process $
\tilde{v}_{F} : \reels^+ \times \Omega \rightarrow \reels^+ $ which is the quadratic variation of a continuous local martingale $Y^F$, i.e.,
\begin{eqnarray}
\label{tildev_F_def_rc1}
\langle Y^F\rangle_{t} (\omega) &=& \tilde{v}_{F}(t,\omega) \text{ for } t \geq 0 \text{ and } \omega \in \Omega,
\end{eqnarray}
is solution if it satisfies
\begin{eqnarray}
\label{PgZF_rc1}
P_1^{Y^F} (t | \omega) &=& F(t,\omega) \text{ for } t \geq 0 \text{ and } \omega \in \Omega.
\end{eqnarray}
\end{definition}
\noindent Equation (\ref{tildev_F_def_rc1}) in Definition \ref{def_solution_F_rc1} implicitly implies the existence of a continuous local martingale with quadratic variation $\tilde{v}_{F}$, but this is true since any standard Wiener process with random time-changed $\frac{\tilde{v}_{F}(t)}{t}$ will have $\tilde{v}_{F}$ as quadratic variation. We now give the definition of the explicit solution.
\begin{definition}
\label{def_explicitsolution_F_rc1}
For any random cdf $F$, we say that a nondecreasing stochastic process $v_F$ which is the quadratic variation of a continuous local martingale $Y^F$, i.e.,
\begin{eqnarray}
\label{v_F_def_rc1}
\langle Y^F\rangle_{t}(\omega) &=& v_{F}(t,\omega) \text{ for } t \geq 0 \text{ and } \omega \in \Omega,
\end{eqnarray} 
is an explicit solution if it is of the form
\begin{eqnarray}
\label{def_expsol_F_rc1}
v_{F}(t,\omega) = &(P_1^W)^{-1}(F(t,\omega)) \mathbf{1}_{\{0 < F(t,\omega) < 1\}} & \text{ for } t \geq 0 \text{ and } \omega \in \Omega.
\end{eqnarray}
\end{definition}
\noindent If we substitute $(P_1^{W})^{-1}$ in Equation (\ref{def_expsol_F_rc1}) with Equation (\ref{PgWinvt}) from Lemma \ref{lemma_inv_nrc1}, we can reexpress the explicit solution as 
\begin{eqnarray}
\label{expsoldefsimp_rc1}
v_{F}(t,\omega) = & \frac{1}{2\operatorname{erfinv}(1-F(t,\omega))^2} \mathbf{1}_{\{0 < F(t,\omega) < 1\}} & \text{ for } t \geq 0 \text{ and } \omega \in \Omega.
\end{eqnarray}
We define the infimum value such that $F(t)$ is positive and the infimum value such that $F(t)$ equals unity as
\begin{eqnarray}
 \label{defKF0_rc1}
 K_F^0 (\omega) & = & \inf \{ t > 0 \text{ such that } F(t,\omega)>0 \} \text{ and }\\
 \label{defKF1_rc1}
 K_F^1(\omega) & = & \inf \{ t > 0 \text{ such that } F(t,\omega)=1 \}.
\end{eqnarray}
Let us give an assumption sufficient for $Y^{F}$ to satisfy Assumption \ref{assDDS_rc1}.
\begin{assumption}
\label{assmain_F_rc1} We assume that $K_F^1(\omega)$ is not finite for $\omega \in \Omega$.
\end{assumption}
\noindent The following theorem states that under Assumption \ref{assmain_F_rc1}, (a) $Y^F$ satisfies Assumption \ref{assDDS_rc1} and (b) that any random nondecreasing function is solution if and only if it is an explicit solution. The proof of (b) is based on substituting the left-hand side of Equation (\ref{PgZF_rc1}) with Equations (\ref{eq_theorem_rc1}) and (\ref{v_F_def_rc1}) and then inverting on both sides of the equation to derive the explicit solution.
\begin{theorem}
\label{theorem_F_rc1}
Under Assumption \ref{assmain_F_rc1}, (a) $Y^{F}$ satisfies Assumption \ref{assDDS_rc1} (b) (i) $v_F$ is a solution in the sense of Definition \ref{def_solution_F_rc1} $\iff$ (ii) $v_F$ is an explicit solution in the sense of Definition \ref{def_explicitsolution_F_rc1}.
\end{theorem}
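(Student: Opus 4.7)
The plan is to treat the two parts separately. For part (a), I construct a canonical representative $Y^F$ explicitly via a time-change: on a suitable extension of the probability space, take a standard $\mathcal{F}$-Brownian motion $B$ and define $Y^F_t := B_{v_F(t,\cdot)}$ with $v_F$ given by Equation~(\ref{def_expsol_F_rc1}). This is automatically a continuous $\mathcal{F}$-adapted local martingale whose quadratic variation equals $v_F$. The three clauses of Assumption~\ref{assDDS_rc1} then need verification: the local martingale/continuity part is built in; $Y^F_0 = 0$ since $F(0,\omega)=0$ forces the indicator in (\ref{def_expsol_F_rc1}) to vanish, so $v_F(0,\omega)=0$; and $\langle Y^F\rangle_\infty(\omega)=\infty$ follows by combining Assumption~\ref{assmain_F_rc1} (which gives $F(t,\omega)<1$ for every finite $t$, but $F(t,\omega)\to 1$ as $t\to\infty$) with the asymptotic $(P_1^W)^{-1}(u)\to\infty$ as $u\to 1^{-}$, which is a consequence of Lemma~\ref{lemma_inv_nrc1} applied with $g=1$.

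For part (b), the crucial identity to establish is
\begin{equation*}
P_1^{Y^F}(t\mid\omega) \;=\; P_1^W\bigl(v_F(t,\omega)\bigr) \quad \text{for a.e. }\omega,\ t\geq 0,
\end{equation*}
which is obtained by applying Proposition~\ref{lemmaDDS_rc1} fiber-wise: on each fiber $\{\langle Y^F\rangle=y\}$ the DDS time-changed Brownian motion is independent of $y$, so the conditional law of $\Tau_1^{Y^F}$ reduces to the nonrandom-constant-boundary case and coincides with the law of $\Tau_1^W$ evaluated at the deterministic function $y$. Then (ii)$\Rightarrow$(i) is a direct substitution: plug the explicit form of $v_F$ into $P_1^W$ and case-check $F(t,\omega)=0$ (both sides vanish), $0<F(t,\omega)<1$ (both sides equal $F(t,\omega)$), while $F(t,\omega)=1$ is excluded by Assumption~\ref{assmain_F_rc1}. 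Conversely, (i)$\Rightarrow$(ii) starts from $P_1^W(v_F(t,\omega))=F(t,\omega)$ and inverts $P_1^W$ on $(0,1)$ using the strict monotonicity supplied by Lemma~\ref{lemma_inv_nrc1}; the indicator is reinserted to cover the trivial case $F(t,\omega)=0$, where the only value consistent with $P_1^W(v_F(t,\omega))=0$ is $v_F(t,\omega)=0$.

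The step I expect to be the main obstacle is the rigorous formulation of the $\omega$-wise DDS application underpinning the identity $P_1^{Y^F}(t\mid\omega)=P_1^W(v_F(t,\omega))$. One must identify the $\sigma$-algebra $\mathcal{G}$ with respect to which the regular conditional cdf $P_1^{Y^F}(\cdot\mid\omega)$ of Definition~\ref{def_solution_F_rc1} is taken (naturally $\mathcal{G}=\sigma(\langle Y^F\rangle)$, enriched if the boundary itself is random), check $\mathcal{G}$-measurability of the time change, verify that the driving Brownian motion of the DDS representation is independent of $\mathcal{G}$, and show that the fiber-wise answer $P_1^W(v_F(t,\omega))$ constitutes a version of that regular conditional cdf. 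Once this measure-theoretic bookkeeping is settled, the rest is a routine combination of Proposition~\ref{lemmaDDS_rc1}, the explicit Lévy formula from Lemma~\ref{lemma_nrc1}, and the strict monotonicity/inversion statement of Lemma~\ref{lemma_inv_nrc1}.
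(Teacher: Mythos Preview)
Your proposal is correct and follows essentially the same route as the paper: part (a) is the paper's Proposition~\ref{prop_ass_F_rc1} (verifying $\langle Y^F\rangle_\infty=\infty$ from $(P_1^W)^{-1}(u)\to\infty$ as $u\to 1^-$ together with Assumption~\ref{assmain_F_rc1}), and part (b) is the paper's Proposition~\ref{propiii_F_rc1} (establish the fiber-wise identity $P_1^{Y^F}(t\mid\omega)=P_1^W(v_F(t,\omega))$, then invert using Lemma~\ref{lemma_inv_nrc1}). You are in fact more explicit than the paper about the measure-theoretic step you flag as the main obstacle: the paper simply asserts the conditional identity by ``substituting Equation~(\ref{eq_theorem_rc1})'' into the conditional equation~(\ref{PgZF_rc1}), whereas you correctly identify that this requires a fiber-wise DDS argument and independence of the driving Brownian motion from $\sigma(\langle Y^F\rangle)$.
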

\noindent In what follows, we consider the particular case when the quadratic variation $\langle Y\rangle$ is absolutely continuous.  We accordingly consider the set of random absolutely continuous cdfs.
\begin{definition} \label{fdef_rc1}
A function $f:\reels^+ \times \Omega \rightarrow \reels^+$ is a random survival pdf if it satifies
\begin{eqnarray}
\label{Ffrelation_rc1}
F(t,\omega) & = &\int_0^t f(s,\omega) ds \text{ for } t \geq 0 \text{ and } \omega \in \Omega.
\end{eqnarray}
\end{definition}
\noindent We give the definition of a solution in the inverse first-passage time problem of absolutely continuous quadratic variation. Since the quadratic variation is absolutely continuous, we can consider its derivative in the solution definition.
\begin{definition}
\label{def_solution_f_rc1}
For any random pdf $f$, we say that a variance stochastic process $\widetilde{\sigma}_{f}^2 : \reels^+ \times \Omega \rightarrow \reels^+ $ which is the quadratic variation  derivative of a continuous local martingale $Y^f$, i.e.,
\begin{eqnarray}
\label{tildesigma_f_def_rc1}
\langle Y^f\rangle_{t} (\omega) &=& \int_0^t \widetilde{\sigma}_{s,f}^2 (\omega) ds \text{ for } t \geq 0 \text{ and } \omega \in \Omega,
\end{eqnarray}
is solution if it satisfies
\begin{eqnarray}
\label{PgZf_rc1}
P_1^{Y^F} (t | \omega)  &=& F(t,\omega) \text{ for } t \geq 0 \text{ and } \omega \in \Omega.
\end{eqnarray}
\end{definition}
\noindent Equation (\ref{tildesigma_f_def_rc1}) in Definition \ref{def_solution_f_rc1} implicitly implies the existence of a continuous local martingale with random quadratic variation $\int_0^t \widetilde{\sigma}_{s,f}^2 ds$, but this is true since we can consider It\^{o} processes from Example \ref{example_ito_rc1}. We now give the definition of the explicit solution.
\begin{definition}
\label{def_explicitsolution_f_rc1}
For any random pdf $f$, we say that a variance stochastic process $\sigma_{f}^2 : \reels^+ \times \Omega \rightarrow \reels^+ $ which is the quadratic variation  derivative of a continuous local martingale $Y^f$, i.e.,
\begin{eqnarray}
\label{sigma_f_def_rc1}
\langle Y^f\rangle_{t} (\omega) &=& \int_0^t \sigma_{s,f}^2(\omega) ds \text{ for } t \geq 0 \text{ and } \omega \in \Omega,
\end{eqnarray} 
is an explicit solution if it is of the form
\begin{eqnarray}
\label{def_expsol_f_rc1}
\sigma_{t,f}^2(\omega) = \frac{f(t,\omega)}{f_1^W((P_1^W)^{-1}(F(t,\omega)))} \mathbf{1}_{\{0 < F(t,\omega) < 1\}} \\ \nonumber   \text{ almost everywhere for } t \geq 0 \text{ and } \omega \in \Omega.
\end{eqnarray}
\end{definition}
\noindent We introduce the notation $h(t,\omega)=\operatorname{erfinv}\left\{1-F(t,\omega)\right\}$. If we substitute $(P_1^{W})^{-1}$ in Equation (\ref{def_expsol_f_rc1}) with Equation (\ref{PgWinvt}) from Lemma \ref{lemma_inv_nrc1}, we can reexpress the explicit solution as 
\begin{eqnarray}
\label{volsimp0_rc1}
\sigma_{t,f}^2(\omega) = & \frac{f(t,\omega)}{\frac{2}{g^2\sqrt{\pi}}{h(t,\omega)}^3e^{-{h(t,\omega)}^2}} \mathbf{1}_{\{0 < F(t,\omega) < 1\}} & \text{ for } t \geq 0 \text{ and } \omega \in \Omega.
\end{eqnarray}
Let us give a set of assumptions sufficient for $Y^{f}$ to satisfy Assumption \ref{assDDS_rc1}.
\begin{assumption}
\label{assmain_f_rc1} We assume that the explicit solution is locally integrable on $\reels^+ \times \Omega$, i.e., 
\begin{eqnarray}
\label{assumptionvol1_rc1}
\sigma_{f}^2 \in L_{1,loc}\big(\reels^+ \times \Omega \big).
\end{eqnarray}
Moreover, we also assume that $K_F^1$ is not finite.
\end{assumption}
\noindent The following theorem in the particular case when the quadratic variation $\langle Y\rangle$ is absolutely continuous states that under Assumption \ref{assmain_f_rc1}, (a) $Y^f$ satisfies Assumption \ref{assDDS_rc1} and (b) that any variance function is solution if and only if it is an explicit solution. The proof of (a) is mainly based on the use of Assumption \ref{assmain_f_rc1}. The proof of (b) is based on substituting the left-hand side of Equation (\ref{PgZf_rc1}) with Equations (\ref{eq_theorem_rc1}) from Theorem \ref{theorem_rc1} and (\ref{sigma_f_def_rc1}) and then differentiating and inverting on both sides of the equation to derive the explicit solution.
\begin{theorem}
\label{theorem_f_rc1}
Under Assumption \ref{assmain_f_rc1}, (a) $Y^f$ satisfies Assumption \ref{assDDS_rc1} (b) (i) $\sigma_f^2$ is a solution in the sense of Definition \ref{def_solution_f_rc1} $\iff$ (ii) $\sigma_f^2$ is an explicit solution in the sense of Definition \ref{def_explicitsolution_f_rc1}.
\end{theorem}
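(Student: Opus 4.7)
The plan is to mirror the two-part structure used for Theorem~\ref{theorem_f_nrc1}, adapting each step to the random cdf and random quadratic variation setting.

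For part (a), I would construct $Y^f$ as the It\^o integral $Y^f_t = \int_0^t \sigma_{s,f}\,dW_s$ of Example~\ref{example_ito_rc1}. Local integrability (\ref{assumptionvol1_rc1}) guarantees that $\sigma_f \in L_{2,\mathrm{loc}}(\reels^+ \times \Omega)$ (since $\sigma_f^2 \geq 0$ and $L_{1,\mathrm{loc}}$ control of $\sigma_f^2$ is exactly $L_{2,\mathrm{loc}}$ control of $\sigma_f$), so the integral is well-defined and $\langle Y^f\rangle_t(\omega) = \int_0^t \sigma^2_{s,f}(\omega)\,ds$ is continuous and $\omega$-a.s.\ finite. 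The remaining condition to check is $\langle Y^f\rangle_{\infty}(\omega) = \infty$ a.s.\ Substituting the explicit formula for $\sigma^2_{t,f}$ and integrating, one obtains (on the set $\{0<F(t,\omega)<1\}$) the identity $\int_0^t \sigma^2_{s,f}(\omega)\,ds = (P_1^W)^{-1}(F(t,\omega))$ after recognizing the integrand as the composition $F'/(f_1^W\circ(P_1^W)^{-1}\circ F)$, which is the derivative of $(P_1^W)^{-1}\circ F$ by the inverse function theorem together with Lemma~\ref{lemma_inv_nrc1}. Assumption~\ref{assmain_f_rc1} gives $K_F^1(\omega) = \infty$, so $F(t,\omega) \uparrow 1$ as $t \uparrow \infty$ but never attains $1$; combined with $(P_1^W)^{-1}(u) \to \infty$ as $u \uparrow 1$ (a consequence of $P_1^W$ being a cdf with support $\reels^+$, from Lemma~\ref{lemma_nrc1}), this forces $\langle Y^f\rangle_\infty(\omega) = \infty$.

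For part (b), direction (ii)$\Rightarrow$(i), starting from the explicit form and using the identity $\int_0^t \sigma^2_{s,f}(\omega)\,ds = (P_1^W)^{-1}(F(t,\omega))$ just obtained, I apply Theorem~\ref{theorem_rc1} in regular-conditional form (i.e., $P_1^{Y^f}(t\mid\omega) = P_1^W(\langle Y^f\rangle_t(\omega))$, which is the $\omega$-wise statement underlying the integral representation \eqref{eq_theorem_rc1}), and then cancel $P_1^W$ against its inverse to recover $F(t,\omega)$. For the converse direction (i)$\Rightarrow$(ii), I start from $P_1^{Y^f}(t\mid\omega) = F(t,\omega)$, apply the same conditional form of Theorem~\ref{theorem_rc1} to obtain $P_1^W\bigl(\int_0^t \sigma^2_{s,f}(\omega)\,ds\bigr) = F(t,\omega)$, invert using Lemma~\ref{lemma_inv_nrc1} to get $\int_0^t \sigma^2_{s,f}(\omega)\,ds = (P_1^W)^{-1}(F(t,\omega))$, and finally differentiate in $t$; the right-hand side is differentiable a.e.\ by the chain rule (with the absolutely continuous random cdf $F$ contributing $f$ and the smooth strictly increasing inverse contributing $1/f_1^W((P_1^W)^{-1}(F(t,\omega)))$), yielding the explicit form a.e.\ in $t$ for each $\omega$.

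The main obstacle I anticipate is justifying the conditional form of Theorem~\ref{theorem_rc1}, namely that conditioning on $\omega$ collapses the integral over $\mathcal{I}(\reels^+,\reels^+)$ in \eqref{eq_theorem_rc1} into a single evaluation $P_1^W(\langle Y^f\rangle_t(\omega))$. This amounts to disintegrating the joint law along the sigma-field generated by $\langle Y^f\rangle$, using the independence between the driving Wiener process arising from the Dambis--Dubins--Schwarz construction and the time-change $\langle Y^f\rangle$; strictly speaking this is already implicit in the proof of Theorem~\ref{theorem_rc1}, but it must be invoked at the level of regular conditional cdfs rather than just marginals. A secondary technical point is the Lebesgue-a.e.\ qualifier in the differentiation step: the It\^o integrand $\sigma_f^2$ is only defined up to a $dt\otimes d\proba$-null set, so both the assertion ``$\sigma_f^2$ is a solution'' and the explicit formula must be interpreted in this a.e.\ sense, which is exactly what \eqref{def_expsol_f_rc1} records.
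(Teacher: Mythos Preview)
Your proposal is correct and follows essentially the same route as the paper: part (a) corresponds to Proposition~\ref{prop_ass_f_rc1} (local integrability gives a well-defined It\^o local martingale, and the integral identity $\int_0^t\sigma_{s,f}^2(\omega)\,ds=(P_1^W)^{-1}(F(t,\omega))$ combined with $K_F^1(\omega)=\infty$ forces $\langle Y^f\rangle_\infty=\infty$), while part (b) matches Proposition~\ref{propiii_f_rc1} step for step (invert, differentiate, inverse function theorem in one direction; substitute and cancel in the other). The subtle point you flag about needing the $\omega$-wise/conditional form of Theorem~\ref{theorem_rc1} is exactly what the paper uses without further comment, so your caution there is well placed but not an obstacle.
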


\subsection{Two-sided (ii) case} 
\noindent By Equation (\ref{eq_theorem_rc2}) from Theorem \ref{theorem_rc2}, we have $P_{g,h}^Z (t)  =  \int_{\mathcal{S}} P_{g_0,h_0}^W \big(z_t\big) dF_{u}(g_0,h_0,z) \text{ for } t \geq 0$. We first give the definition of a solution in the inverse first-passage time problem of quadratic variation. For any $\mathcal{F}$-adapted continuous stochastic process $(Z_t)_{t \in \reels^+}$ started at 0, i.e., with $Z_0=0$, we define the regular conditional cdf of $\Tau_{g,h}^Z$ as $P_{g,h}^Z(|\omega)$.
\begin{definition}
\label{def_solution_F_rc2}
For any random cdf $F$, we say that a  nondecreasing stochastic process $
\tilde{v}_{F} : \reels^+ \times \Omega \rightarrow \reels^+ $ which is the quadratic variation of a continuous local martingale $Z^F$, i.e.,
\begin{eqnarray}
\label{tildev_F_def_rc2}
\langle Z^F\rangle_{t} (\omega) &=& \tilde{v}_{F}(t,\omega) \text{ for } t \geq 0 \text{ and } \omega \in \Omega,
\end{eqnarray}
is solution if it satisfies
\begin{eqnarray}
\label{PgZF_rc2}
P_{g,h}^{Z^F} (t | \omega) &=& F(t,\omega) \text{ for } t \geq 0 \text{ and } \omega \in \Omega.
\end{eqnarray}
\end{definition}
\noindent Equation (\ref{tildev_F_def_rc2}) in Definition \ref{def_solution_F_rc2} implicitly implies the existence of a continuous local martingale with quadratic variation $\tilde{v}_{F}$, but this is true since any standard Wiener process with random time-changed $\frac{\tilde{v}_{F}(t)}{t}$ will have $\tilde{v}_{F}$ as quadratic variation. We now give the definition of the explicit solution.
\begin{definition}
\label{def_explicitsolution_F_rc2}
For any random cdf $F$, we say that a nondecreasing stochastic process $v_F$ which is the quadratic variation of a continuous local martingale $Z^F$, i.e.,
\begin{eqnarray}
\label{v_F_def_rc2}
\langle Z^F\rangle_{t}(\omega) &=& v_{F}(t,\omega) \text{ for } t \geq 0 \text{ and } \omega \in \Omega,
\end{eqnarray} 
is an explicit solution if it is of the form
\begin{eqnarray}
\label{def_expsol_F_rc2}
v_{F}(t,\omega) = &(P_{g,h}^W)^{-1}(F(t,\omega)) \mathbf{1}_{\{0 < F(t,\omega) < 1\}} & \text{ for } t \geq 0 \text{ and } \omega \in \Omega.
\end{eqnarray}
\end{definition}
\noindent The following theorem states that under Assumption \ref{assmain_F_rc1}, (a) $Z^F$ satisfies Assumption \ref{assDDS_rc1} and (b) that any random nondecreasing function is solution if and only if it is an explicit solution. The proof of (b) is based on substituting the left-hand side of Equation (\ref{PgZF_rc2}) with Equations (\ref{eq_theorem_rc2}) and (\ref{v_F_def_rc2}) and then inverting on both sides of the equation to derive the explicit solution.
\begin{theorem}
\label{theorem_F_rc2}
Under Assumption \ref{assmain_F_rc1}, (a) $Z^{F}$ satisfies Assumption \ref{assDDS_rc1} (b) (i) $v_F$ is a solution in the sense of Definition \ref{def_solution_F_rc2} $\iff$ (ii) $v_F$ is an explicit solution in the sense of Definition \ref{def_explicitsolution_F_rc2}.
\end{theorem}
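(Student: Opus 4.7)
My plan is to closely mimic the structure of Theorem \ref{theorem_F_nrc2} and Theorem \ref{theorem_F_rc1}, combining the random-boundary handling of the latter with the inversion of $P_{g,h}^W$ from Lemma \ref{lemma_inv_nrc2}. The backbone is: (a) verify the Dambis--Dubins--Schwarz hypothesis using the extremal properties of $(P_{g,h}^W)^{-1}$, and (b) derive the two implications by substituting Theorem \ref{theorem_rc2} (pointwise in $\omega$) into the defining equation $P_{g,h}^{Z^F}(t\mid\omega)=F(t,\omega)$ and inverting.

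For part (a), I would argue pathwise in $\omega$. By the definition of the explicit solution, $\langle Z^F\rangle_t(\omega)=(P_{g,h}^W)^{-1}(F(t,\omega))\mathbf{1}_{\{0<F(t,\omega)<1\}}$. Since $F(0,\omega)=0$ by Definition \ref{Fdef_rc1}, we get $\langle Z^F\rangle_0(\omega)=0$. Assumption \ref{assmain_F_rc1} forces $F(t,\omega)<1$ for every finite $t$, while $F(t,\omega)\to 1$ as $t\to\infty$ by the same definition; combined with the fact from Lemma \ref{lemma_inv_nrc2} that $(P_{g,h}^W)^{-1}$ is continuous, strictly increasing on $[0,1)$ with $(P_{g,h}^W)^{-1}(u)\to\infty$ as $u\uparrow 1$, this yields $\langle Z^F\rangle_\infty(\omega)=\infty$. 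Monotonicity in $t$ follows because $F(\cdot,\omega)$ is nondecreasing and $(P_{g,h}^W)^{-1}$ is increasing, so $v_F$ is a legitimate quadratic variation of some continuous local martingale $Z^F$ (for instance, a time-changed Wiener process as noted in the discussion following Definition \ref{def_explicitsolution_F_rc2}), and $Z^F$ satisfies Assumption \ref{assDDS_rc2} (which is what is needed to invoke Theorem \ref{theorem_rc2}).

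For part (b), the central identity is the conditional version of Theorem \ref{theorem_rc2}. Assuming (as in Theorem \ref{theorem_rc2}) that $Z^F$ is independent of $(g,h)$, Proposition \ref{lemmaDDS_rc2} gives pathwise $\{\Tau_{g,h}^{Z^F}\le t\}=\{\Tau_{g(\omega),h(\omega)}^B\le v_F(t,\omega)\}$ on a $\proba$-null exceptional set, so the regular conditional cdf reduces to
\begin{equation*}
P_{g,h}^{Z^F}(t\mid\omega)\;=\;P_{g(\omega),h(\omega)}^W\bigl(v_F(t,\omega)\bigr)\qquad\text{for all }t\ge 0,\ \omega\in\Omega.
\end{equation*}
For (i)$\Rightarrow$(ii), start from $P_{g(\omega),h(\omega)}^W(v_F(t,\omega))=F(t,\omega)$. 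On $\{0<F(t,\omega)<1\}$, Lemma \ref{lemma_inv_nrc2} applied with $(g,h)=(g(\omega),h(\omega))$ lets us invert to obtain $v_F(t,\omega)=(P_{g,h}^W)^{-1}(F(t,\omega))$; on $\{F(t,\omega)=0\}$ the monotonicity of $v_F$ and the identity $P_{g,h}^W(0)=0$ force $v_F(t,\omega)=0$; and $\{F(t,\omega)=1\}$ is empty by Assumption \ref{assmain_F_rc1}. For (ii)$\Rightarrow$(i), substitute the explicit form into the displayed conditional identity and use that $P_{g,h}^W$ and $(P_{g,h}^W)^{-1}$ are mutual inverses on $[0,1)$, together with $P_{g,h}^W(0)=0$ and the continuity of $F$.

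The main delicate point will be the pathwise reading of Proposition \ref{lemmaDDS_rc2} that produces the regular conditional cdf $P_{g,h}^{Z^F}(t\mid\omega)=P_{g(\omega),h(\omega)}^W(v_F(t,\omega))$: this requires both the independence assumption (so that the DDS Brownian motion $B$ is independent of $(g,h)$ and hence its one- and two-sided hitting law factors cleanly through the conditioning) and a careful treatment of null sets when passing from an almost-sure event to a genuinely pointwise identity in $\omega$. The remaining inversion and boundary-case bookkeeping are essentially algebraic, mirroring exactly the arguments used in Theorems \ref{theorem_F_nrc2} and \ref{theorem_F_rc1}.
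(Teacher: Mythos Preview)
Your proposal is correct and follows essentially the same approach as the paper: part (a) is the content of Proposition \ref{prop_ass_F_rc2} (use $(P_{g,h}^W)^{-1}(u)\to\infty$ as $u\uparrow 1$ together with $F(t,\omega)\to 1$ and Assumption \ref{assmain_F_rc1}), and part (b) is Proposition \ref{propiii_F_rc2} (the conditional identity $P_{g,h}^{Z^F}(t\mid\omega)=P_{g,h}^W(v_F(t,\omega))$ followed by inversion via Lemma \ref{lemma_inv_nrc2}). If anything, you are more careful than the paper about the independence hypothesis and the passage from Proposition \ref{lemmaDDS_rc2} to the regular conditional cdf, which the paper simply asserts by citing Equation (\ref{eq_theorem_rc2}).
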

\noindent In what follows, we consider the particular case when the quadratic variation $\langle Z\rangle$ is absolutely continuous. We give the definition of a solution in the inverse first-passage time problem of absolutely continuous quadratic variation. Since the quadratic variation is absolutely continuous, we can consider its derivative in the solution definition.
\begin{definition}
\label{def_solution_f_rc2}
For any random pdf $f$, we say that a variance stochastic process $\widetilde{\sigma}_{f}^2 : \reels^+ \times \Omega \rightarrow \reels^+ $ which is the quadratic variation  derivative of a continuous local martingale $Z^f$, i.e.,
\begin{eqnarray}
\label{tildesigma_f_def_rc2}
\langle Z^f\rangle_{t} (\omega) &=& \int_0^t \widetilde{\sigma}_{s,f}^2 (\omega) ds \text{ for } t \geq 0 \text{ and } \omega \in \Omega,
\end{eqnarray}
is solution if it satisfies
\begin{eqnarray}
\label{PgZf_rc2}
P_{g,h}^{Z^F} (t | \omega)  &=& F(t,\omega) \text{ for } t \geq 0 \text{ and } \omega \in \Omega.
\end{eqnarray}
\end{definition}
\noindent Equation (\ref{tildesigma_f_def_rc2}) in Definition \ref{def_solution_f_rc2} implicitly implies the existence of a continuous local martingale with random quadratic variation $\int_0^t \widetilde{\sigma}_{s,f}^2 ds$, but this is true since we can consider It\^{o} processes from Example \ref{example_ito_rc1}. We now give the definition of the explicit solution.
\begin{definition}
\label{def_explicitsolution_f_rc2}
For any random pdf $f$, we say that a variance stochastic process $\sigma_{f}^2 : \reels^+ \times \Omega \rightarrow \reels^+ $ which is the quadratic variation  derivative of a continuous local martingale $Z^f$, i.e.,
\begin{eqnarray}
\label{sigma_f_def_rc2}
\langle Z^f\rangle_{t} (\omega) &=& \int_0^t \sigma_{s,f}^2(\omega) ds \text{ for } t \geq 0 \text{ and } \omega \in \Omega,
\end{eqnarray} 
is an explicit solution if it is of the form
\begin{eqnarray}
\label{def_expsol_f_rc2}
\sigma_{t,f}^2(\omega) = \frac{f(t,\omega)}{f_{g,h}^W((P_{g,h}^W)^{-1}(F(t,\omega)))} \mathbf{1}_{\{0 < F(t,\omega) < 1\}} \\ \nonumber   \text{ almost everywhere for } t \geq 0 \text{ and } \omega \in \Omega.
\end{eqnarray}
\end{definition}
\noindent The following theorem in the particular case when the quadratic variation $\langle Z\rangle$ is absolutely continuous states that under Assumption \ref{assmain_f_rc1}, (a) $Z^f$ satisfies Assumption \ref{assDDS_rc1} and (b) that any variance function is solution if and only if it is an explicit solution. The proof of (a) is mainly based on the use of Assumption \ref{assmain_f_rc1}. The proof of (b) is based on substituting the left-hand side of Equation (\ref{PgZf_rc2}) with Equations (\ref{eq_theorem_rc2}) from Theorem \ref{theorem_rc2} and (\ref{sigma_f_def_rc2}) and then differentiating and inverting on both sides of the equation to derive the explicit solution.
\begin{theorem}
\label{theorem_f_rc2}
Under Assumption \ref{assmain_f_rc1}, (a) $Z^f$ satisfies Assumption \ref{assDDS_rc1} (b) (i) $\sigma_f^2$ is a solution in the sense of Definition \ref{def_solution_f_rc2} $\iff$ (ii) $\sigma_f^2$ is an explicit solution in the sense of Definition \ref{def_explicitsolution_f_rc2}.
\end{theorem}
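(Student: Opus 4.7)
The plan is to mirror the structure used for Theorem \ref{theorem_f_nrc1}, but substituting the random two-sided inversion machinery (Lemma \ref{lemma_inv_nrc2} and Theorem \ref{theorem_rc2}) for its one-sided deterministic counterpart. First I would dispatch part (a) by checking each bullet of Assumption \ref{assDDS_rc1} directly from the explicit form of $\sigma_f^2$. Local integrability of $\sigma_f^2$ is Assumption \ref{assmain_f_rc1}, so by Example \ref{example_ito_rc1} the process $Z^f = \int_0^{\cdot}\sigma_{s,f}dW_s$ (with $W$ taken independent of $(g,h)$ so that the hypothesis of Theorem \ref{theorem_rc2} is met) is a continuous $\mathcal{F}$-local martingale with $Z^f_0=0$ and random quadratic variation $\langle Z^f\rangle_t(\omega)=\int_0^t\sigma_{s,f}^2(\omega)ds$. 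Since $K_F^1(\omega)$ is not finite, $F(t,\omega)\uparrow 1$ as $t\to\infty$, so by Lemma \ref{lemma_inv_nrc2} $(P_{g(\omega),h(\omega)}^W)^{-1}(F(t,\omega))\to\infty$; combined with the identity derived in part (b) below, this forces $\langle Z^f\rangle_{\infty}(\omega)=\infty$.

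For part (b), the $(\Leftarrow)$ direction proceeds by direct verification. Assume $\sigma_f^2$ is given by Equation \eqref{def_expsol_f_rc2}. By part (a), $Z^f$ satisfies Assumption \ref{assDDS_rc1} so Theorem \ref{theorem_rc2} applies; conditioning on $\omega$ (which fixes the realization $u(\omega)=(g(\omega),h(\omega),\langle Z^f\rangle(\omega))$) collapses the integral in Equation \eqref{eq_theorem_rc2} to
$$P_{g,h}^{Z^f}(t\mid\omega) \;=\; P_{g(\omega),h(\omega)}^W\!\Bigl(\int_0^t\sigma_{s,f}^2(\omega)\,ds\Bigr).$$
Writing $A(t,\omega)=\int_0^t\sigma_{s,f}^2(\omega)ds$ and differentiating the explicit formula, the chain rule and the identity $(P_{g,h}^W)'\circ(P_{g,h}^W)^{-1}=f_{g,h}^W\circ(P_{g,h}^W)^{-1}$ yield $A'(t,\omega)=\frac{d}{dt}(P_{g(\omega),h(\omega)}^W)^{-1}(F(t,\omega))$ almost everywhere on $\{0<F<1\}$. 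Integration from $0$ with $A(0,\omega)=0$ gives $A(t,\omega)=(P_{g(\omega),h(\omega)}^W)^{-1}(F(t,\omega))$, so applying $P_{g(\omega),h(\omega)}^W$ yields $P_{g,h}^{Z^f}(t\mid\omega)=F(t,\omega)$ as required.

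The $(\Rightarrow)$ direction reverses this computation. Given a solution $\sigma_f^2$, Theorem \ref{theorem_rc2} conditioned on $\omega$ gives
$$F(t,\omega) \;=\; P_{g(\omega),h(\omega)}^W\!\Bigl(\int_0^t\sigma_{s,f}^2(\omega)\,ds\Bigr).$$
Inverting $P_{g(\omega),h(\omega)}^W$ (valid on $\{0<F(t,\omega)<1\}$ by Lemma \ref{lemma_inv_nrc2}) gives $\int_0^t\sigma_{s,f}^2(\omega)ds=(P_{g(\omega),h(\omega)}^W)^{-1}(F(t,\omega))$; differentiating in $t$ and solving for $\sigma_{t,f}^2$ produces Equation \eqref{def_expsol_f_rc2} a.e. On the complement $\{F(t,\omega)=0\}\cup\{F(t,\omega)=1\}$, both definitions agree trivially (the indicator vanishes, and the solution relation forces $\sigma_{t,f}^2=0$ almost everywhere there by monotonicity of $F$).

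The main obstacle will be the measure-theoretic handling of the regular conditional probability $P_{g,h}^{Z^f}(\cdot\mid\omega)$ and the rigorous reduction of Theorem \ref{theorem_rc2} to its pointwise-in-$\omega$ form; this requires the independence of the driving Brownian motion $W$ from $(g,h)$, which has to be imposed by construction in the $(\Leftarrow)$ direction and respected by any candidate solution in the $(\Rightarrow)$ direction. A secondary technicality is the behavior at the boundary points $K_F^0(\omega)$ and $K_F^1(\omega)$: one must verify that the derivative identity holds a.e.\ despite the indicator, and that divergence of $\langle Z^f\rangle_\infty$ is consistent with the blow-up of $(P_{g,h}^W)^{-1}$ at $1$ established in Lemma \ref{lemma_inv_nrc2}.
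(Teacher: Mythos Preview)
Your proposal is correct and follows essentially the same approach as the paper: the paper factors the argument into Proposition \ref{prop_ass_f_rc2} (your part (a)) and Proposition \ref{propiii_f_rc2} (your part (b)), each proved exactly as you outline---local integrability plus Lemma \ref{lemma_inv_nrc2} for (a), and the invert/differentiate manoeuvre via the pointwise-in-$\omega$ reduction of Theorem \ref{theorem_rc2} for (b). In fact your proposal is somewhat more careful than the paper, which makes the same collapse from the integral form \eqref{eq_theorem_rc2} to the conditional identity $P_{g,h}^W(\langle Z^f\rangle_t(\omega))=F(t,\omega)$ without explicitly addressing the measure-theoretic point you flag.
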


\section{Proofs of the explicit formula}
\label{section_proofs_expfor}
In this section, we prove the explicit formula for the one-sided and two-sided boundary crossing probability (\ref{def_bcp})-(\ref{def_bcp2}) in the case (i) and (ii).
\subsection{One-sided (i) case}
We start with the proof of Lemma \ref{lemma_nrc1}, which are well-known results from \cite{malmquist1954certain} (Theorem 1, p. 526).
\begin{proof}[Proof of Lemma \ref{lemma_nrc1}]
By \cite{malmquist1954certain} (Theorem 1, p. 526), we have that the probability that a standard Wiener process crosses a constant boundary $g$ conditioned on its arrival value $x$ at arrival time $T$ is given by
\begin{eqnarray}
\label{malmquisteq0}
\proba (\Tau^Z_g \leq T | W_T = x) = \exp{\Big(-\frac{2g(g-x)}{T}\Big)}\mathbf{1}_{\{x \leq g\}} + \mathbf{1}_{\{x > g\}}
\end{eqnarray}
for any $x \in \reels$. \cite{wang1997boundary} (Equations (3), p. 55) integrate Equation (\ref{malmquisteq0}) with respect to the Wiener process arrival value $s$ and derive the cdf as $P_g^W (0)  =  0$ and Equation (\ref{eqP_lemma_nrc1}). Then, we can deduce the pdf for $t>0$ as
\begin{align*}
f_g^W(t)&=\frac{d}{dt} P_g^W(t)\\
&=\frac{d}{dt}\left(1 - \Phi \left(\frac{g}{\sqrt{t}}\right) + \Phi \left(\frac{-g}{\sqrt{t}}\right)\right)\\
&=\frac{d}{dt}\left(1 - \int_{- \infty}^{\frac{g}{\sqrt{t}}} {\displaystyle {\frac {1}{ {\sqrt {2\pi }}}}e^{-{\frac {u^2}{2}}}} du + \int_{- \infty}^{\frac{- g}{\sqrt{t}}} {\displaystyle {\frac {1}{ {\sqrt {2\pi }}}}e^{-{\frac {u^2}{2}}}} du \right)\\ &=\frac{ge^{-\frac{g^2}{2t}}}{\sqrt{2\pi t^3}},
\end{align*}
where we use Equation (\ref{fZgt_nrc1}) in the first equality, Equation (\ref{eqP_lemma_nrc1}) in the second equality, Equation (\ref{def_standardgaussian}) in the third equality, the fundamental theorem of calculus along with the chain rule in the fourth equality. We have thus shown Equation (\ref{eqf_lemma_nrc1}). 

\end{proof}
\noindent We give now the proof of Proposition \ref{lemmaDDS_nrc1}. The main idea to prove it is the scale invariant property of the time-changed Wiener process and thus the scale invariant property of the FPT. 
\begin{proof}[Proof of Proposition \ref{lemmaDDS_nrc1}]
We have that for any $t \geq 0$
\begin{eqnarray}
\nonumber \big\{ \Tau_g^Z =t \big\} 
& = & \big\{ \inf \{s \geq 0 \text{ s.t. } Z_s \geq g\} = t \big\}\\ \label{eq_proof_lemmaDDS_nrc1} 
& = & \big\{ \inf \{s \geq 0\text{ s.t. } B_{\langle Z\rangle_{s}} \geq g\} = t \big\},
\end{eqnarray}
where we use Equation (\ref{TgZdef_nrc1}) in the first equality, and Equation (\ref{proof230110}) in the second equality. Since $B$ is a $({\mathcal {F}}_{\langle Z\rangle_{t}^{-1}})_{t\geq 0}$-Wiener process, $W$ is an $({\mathcal {F}}_{t})_{t\geq 0}$-Wiener process and the boundary is constant, we can use a time change of quadratic variation $\langle Z\rangle_t$ value to obtain that
\begin{eqnarray}
\label{eq_proof_lemmaDDS_nrc2} \big\{ \inf \{s \geq 0\text{ s.t. } B_{\langle Z\rangle_{s}} \geq g\} = t \big\} = \big\{\inf \{s \geq 0 \text{ s.t. } B_s \geq g \} = \langle Z\rangle_t \big\}.
\end{eqnarray}
Then, we can calculate by Equation (\ref{TgZdef_nrc1}) that
\begin{eqnarray}
\label{eq_proof_lemmaDDS_nrc3} \big\{\inf \{s \geq 0 \text{ s.t. } B_s \geq g \} = \langle Z\rangle_t \big\} & = & \big\{ \Tau^B_g = \langle Z\rangle_t \big\}, 
\end{eqnarray}
By Equations (\ref{eq_proof_lemmaDDS_nrc1}), (\ref{eq_proof_lemmaDDS_nrc2}) and (\ref{eq_proof_lemmaDDS_nrc3}), we can deduce Equation (\ref{eq_lemmaDDS_nrc1}).
\end{proof}
\noindent In what follows, we give the proof of Theorem \ref{theorem_nrc1}. The proof is mainly based on Proposition \ref{lemmaDDS_nrc1}.
\begin{proof}[Proof of Theorem \ref{theorem_nrc1}]
We have that for any $t \geq 0$
\begin{eqnarray}
\nonumber P_g^Z(t) & = & \proba (\Tau^Z_g \leq t)\\ \nonumber 
& = & \proba \big(\inf \{s \geq 0 \text{ s.t. } Z_s \geq g\} \leq t \big)\\ \label{eq_proof_theorem_nrc1} 
& = & \proba \big(\inf \{s \geq 0\text{ s.t. } B_{\langle Z\rangle_{s}} \geq g\} \leq t\big),
\end{eqnarray}
where we use Equation (\ref{PgZdef_nrc1}) in the first equality, Equation (\ref{TgZdef_nrc1}) in the second equality, and Equation (\ref{proof230110}) in the third equality. By Lemma \ref{lemmaDDS_nrc1} along with Assumption \ref{assDDS_nrc1}, we obtain that
\begin{eqnarray}
\label{eq_proof_theorem_nrc2}\proba \Big(\inf \{s \geq 0 \text{ s.t. } B_{\langle Z\rangle_s}\geq g\} \leq t \Big)=\proba \Big(\inf \{s \geq 0 \text{ s.t. } B_s \geq g \} \leq \langle Z\rangle_t \Big).
\end{eqnarray}
Then, we can calculate that
\begin{eqnarray}
\nonumber \proba \Big(\inf \{s \geq 0 \text{ s.t. } B_s \geq g \} \leq \langle Z\rangle_t \Big)& = & \proba \Big(\inf \{s \geq 0 \text{ s.t. } W_s \geq g \} \leq \langle Z\rangle_t \Big)\\ \nonumber & = &\proba \Big(\Tau^W_g \leq \langle Z\rangle_t\Big),\\ \label{eq_proof_theorem_nrc3}
& = & P_g^W\big(\langle Z\rangle_t\big), 
\end{eqnarray}
where we use the fact that $B$ and $W$ have the same distribution in the first equality, Equation (\ref{TgZdef_nrc1}) in the second equality, and Equation (\ref{PgZdef_nrc1}) in the third equality. By Equations (\ref{eq_proof_theorem_nrc1}), (\ref{eq_proof_theorem_nrc2}) and (\ref{eq_proof_theorem_nrc3}), we can deduce Equation (\ref{eq_theorem_nrc1}).
\end{proof}
\noindent Finally, we give the proof of Corollary \ref{corollary_nrc1}.
\begin{proof}[Proof of Corollary \ref{corollary_nrc1}]
We have for $t \geq 0$
\begin{eqnarray*}
f_g^X (t) & = & \frac{dP_g^X(t)}{dt}\\
& = & \frac{d(P_g^W(\langle Z\rangle_t))}{dt}\\
& = & \langle Z\rangle_t ' f_g^W \big(\langle Z\rangle_t\big),
\end{eqnarray*}
where we use Equation (\ref{fZgt_nrc1}) in the first equality, Equation (\ref{eq_theorem_nrc1}) from Theorem \ref{theorem_nrc1} along with Assumption \ref{assDDS_nrc1} in the second equality, and the fundamental theorem of calculus along with chain rule and the assumption that the quadratic variation $\langle Z\rangle$ is absolutely continuous on $\reels^+$ in the third equality.
\end{proof}

\subsection{Two-sided (i) case}
We start with the proof of Lemma \ref{lemma_nrc2}, which is well-known results from \cite{anderson1960modification} (Theorem 5.1, p. 191).
\begin{proof}[Proof of Lemma \ref{lemma_nrc2}] Equation (\ref{eqf_lemma_nrc2}) is a more compact form of the explicit formulae (4.3)-(4.4) (Theorem 5.1, p. 191) in \cite{anderson1960modification}. Then, we derive the integral of $\operatorname{ss}(v,w)$ for any $0<v<w$ as
\begin{eqnarray}
\nonumber \int_0^t{\mathrm{ss}_{x}(v, w)dx}&=&\sum_{k=-\infty}^{\infty}\frac{w-v+2kw}{\sqrt{2\pi}}\int_0^t{x^{-3/2}e^{-(w-v+2kw)^{2}/2x}dx}\\ \label{intssx}
&=&\sum_{k=-\infty}^{\infty}\left(2-2 \phi\left(\frac{w-v+2kw}{\sqrt{t}}\right)\right),
\end{eqnarray}
where we use Equation (\ref{sst}) in the first equality. Then, we can obtain $P_{g,h}^W (0)  = 0$ and Equation (\ref{eqP_lemma_nrc2}) by integrating Equation (\ref{eqf_lemma_nrc2}) with the use of Equation (\ref{intssx}) for any $t>0$.
\end{proof}

\noindent We give now the proof of Proposition \ref{lemmaDDS_nrc2}. The main idea to prove it is the scale invariant property of the time-changed Wiener process and thus the scale invariant property of the FPT which adapts to the two-sided 
 (i) case since the arguments of boundary constancy and Dambis, Dubins-Schwarz theorem still hold.
\begin{proof}[Proof of Proposition \ref{lemmaDDS_nrc2}]
We have that for any $t \geq 0$
\begin{eqnarray}
\nonumber \big\{ \Tau_{g,h}^Z =t \big\} 
& = & \big\{ \inf \{s \geq 0 \text{ s.t. } Z_s \geq g \text{ or } Z_s \leq h\} = t \big\}\\ \label{eq_proof_lemmaDDS_nrc21} 
& = & \big\{ \inf \{s \geq 0\text{ s.t. } B_{\langle Z\rangle_{s}} \geq g \text{ or } B_{\langle Z\rangle_{s}} \leq h\} = t \big\},
\end{eqnarray}
where we use Equation (\ref{TgZdef_nrc2}) in the first equality, and Equation (\ref{proof230110_nrc2}) in the second equality. Since $B$ is a $({\mathcal {F}}_{\langle Z\rangle_{t}^{-1}})_{t\geq 0}$-Wiener process, $W$ is an $({\mathcal {F}}_{t})_{t\geq 0}$-Wiener process and the two-sided boundary is constant, we can use a time change of quadratic variation $\langle Z\rangle_t$ value to obtain that
\begin{eqnarray}
\label{eq_proof_lemmaDDS_nrc22} \big\{ \inf \{s \geq 0\text{ s.t. } B_{\langle Z\rangle_{s}} \geq g \text{ or } B_{\langle Z\rangle_{s}} \leq h\} = t \big\} \\ \nonumber = \big\{\inf \{s \geq 0 \text{ s.t. } B_s \geq g \text{ or } B_{s} \leq h \} = \langle Z\rangle_t \big\}.
\end{eqnarray}
Then, we can calculate by Equation (\ref{TgZdef_nrc2}) that
\begin{eqnarray}
\label{eq_proof_lemmaDDS_nrc23} \big\{\inf \{s \geq 0 \text{ s.t. } B_s \geq g \text{ or } B_{s} \leq h \} = \langle Z\rangle_t \big\} & = & \big\{ \Tau^B_{g,h} = \langle Z\rangle_t \big\}. 
\end{eqnarray}
By Equations (\ref{eq_proof_lemmaDDS_nrc21}), (\ref{eq_proof_lemmaDDS_nrc22}) and (\ref{eq_proof_lemmaDDS_nrc23}), we can deduce Equation (\ref{eq_lemmaDDS_nrc2}).
\end{proof}
\noindent In what follows, we give the proof of Theorem \ref{theorem_nrc2}. The proof is mainly based on Proposition \ref{lemmaDDS_nrc2}.
\begin{proof}[Proof of Theorem \ref{theorem_nrc2}] We have that for any $t \geq 0$
\begin{eqnarray}
\nonumber P_{g,h}^Z(t) & = & \proba (\Tau^Z_{g,h} \leq t)\\ \nonumber 
& = & \proba \big(\inf \{s \geq 0 \text{ s.t. } Z_s \geq g \text{ or } Z_s \leq h\} \leq t \big)\\ \label{eq_proof_theorem_nrc21} 
& = & \proba \big(\inf \{s \geq 0\text{ s.t. } B_{\langle Z\rangle_{s}} \geq g \text{ or } B_{\langle Z\rangle_{s}} \leq h\} \leq t\big),
\end{eqnarray}
where we use Equation (\ref{PgZdef_nrc2}) in the first equality, Equation (\ref{TgZdef_nrc2}) in the second equality, and Equation (\ref{proof230110_nrc2}) in the third equality. By Lemma \ref{lemmaDDS_nrc2} along with Assumption \ref{assDDS_nrc1}, we obtain that
\begin{eqnarray}
\label{eq_proof_theorem_nrc22}\proba \Big(\inf \{s \geq 0 \text{ s.t. } B_{\langle Z\rangle_s}\geq g \text{ or } B_{\langle Z\rangle_{s}} \leq h\} \leq t \Big) \\ \nonumber =\proba \Big(\inf \{s \geq 0 \text{ s.t. } B_s \geq g \text{ or } B_s \leq h \} \leq \langle Z\rangle_t \Big).
\end{eqnarray}
Then, we can calculate that
\begin{eqnarray}
\nonumber & & \proba \Big(\inf \{s \geq 0 \text{ s.t. } B_s \geq g \text{ or } B_s \leq h \} \leq \langle Z\rangle_t \Big)\\
\nonumber &=& \proba \Big(\inf \{s \geq 0 \text{ s.t. } W_s \geq g \text{ or } W_s \leq h\} \leq \langle Z\rangle_t \Big)\\ \nonumber & = & \proba \Big(\Tau^W_{g,h} \leq \langle Z\rangle_t\Big),\\ \label{eq_proof_theorem_nrc23}
& = & P_{g,h}^W\big(\langle Z\rangle_t\big), 
\end{eqnarray}
where we use the fact that $B$ and $W$ have the same distribution in the second equality, Equation (\ref{TgZdef_nrc2}) in the third equality, and Equation (\ref{PgZdef_nrc2}) in the fourth equality. By Equations (\ref{eq_proof_theorem_nrc21}), (\ref{eq_proof_theorem_nrc22}) and (\ref{eq_proof_theorem_nrc23}), we can deduce Equation (\ref{eq_theorem_nrc2}).
\end{proof}
\noindent Finally, we give the proof of Corollary \ref{corollary_nrc2}.
\begin{proof}[Proof of Corollary \ref{corollary_nrc2}]
We have for $t \geq 0$
\begin{eqnarray*}
f_{g,h}^X (t) & = & \frac{dP_{g,h}^X(t)}{dt}\\
& = & \frac{d(P_{g,h}^W(\langle Z\rangle_t))}{dt}\\
& = & \langle Z\rangle_t ' f_{g,h}^W \big(\langle Z\rangle_t\big),
\end{eqnarray*}
where we use Equation (\ref{fZgt_nrc2}) in the first equality, Equation (\ref{eq_theorem_nrc2}) from Theorem \ref{theorem_nrc2} along with Assumption \ref{assDDS_nrc1} in the second equality, and the fundamental theorem of calculus along with chain rule  and the assumption that the quadratic variation $\langle Z\rangle$ is absolutely continuous on $\reels^+$ in the third equality.
\end{proof}
\subsection{One-sided (ii) case}
\noindent We give now the proof of Proposition \ref{lemmaDDS_rc1}. The main idea to prove it is the scale invariant property of the time-changed Wiener process and thus the scale invariant property of the FPT which adapts to the one-sided 
 (ii) case by using the new process. 
\begin{proof}[Proof of Proposition \ref{lemmaDDS_rc1}]
We have that for any $t \geq 0$
\begin{eqnarray}
\nonumber \big\{ \Tau_1^Y =t \big\} 
& = & \big\{ \inf \{s \geq 0 \text{ s.t. } Y_s \geq 1\} = t \big\}\\ \label{eq_proof_lemmaDDS_rc1} 
& = & \big\{ \inf \{s \geq 0\text{ s.t. } B_{\langle Y\rangle_{s}} \geq 1\} = t \big\},
\end{eqnarray}
where we use Equation (\ref{TgZdef_rc1}) in the first equality, and Equation (\ref{proof230110_rc1}) in the second equality. Since $B$ is a $({\mathcal {F}}_{\langle Z\rangle_{t}^{-1}})_{t\geq 0}$-Wiener process, $W$ is an $({\mathcal {F}}_{t})_{t\geq 0}$-Wiener process and the boundary is constant, we can use a time change of quadratic variation $\langle Z\rangle_t$ value to obtain that
\begin{eqnarray}
\label{eq_proof_lemmaDDS_rc2} \big\{ \inf \{s \geq 0\text{ s.t. } B_{\langle Y\rangle_{s}} \geq 1\} = t \big\} = \big\{\inf \{s \geq 0 \text{ s.t. } B_s \geq 1 \} = \langle Y\rangle_t \big\}.
\end{eqnarray}
Then, we can calculate by Equation (\ref{TgZdef_rc1}) that
\begin{eqnarray}
\label{eq_proof_lemmaDDS_rc3} \big\{\inf \{s \geq 0 \text{ s.t. } B_s \geq 1 \} = \langle Y\rangle_t \big\} & = & \big\{ \Tau^B_1 = \langle Y\rangle_t \big\}. 
\end{eqnarray}
By Equations (\ref{eq_proof_lemmaDDS_rc1}), (\ref{eq_proof_lemmaDDS_rc2}) and (\ref{eq_proof_lemmaDDS_rc3}), we can deduce Equation (\ref{eq_lemmaDDS_rc1}).
\end{proof}
\noindent In what follows, we give the proof of Theorem \ref{theorem_rc1}. The proof is mainly based on Proposition \ref{lemmaDDS_rc1}. We get $P_1^Y$ in the proof of Theorem \ref{theorem_rc1} by integrating $\proba (\Tau^{Y}_1 \leq t |\langle Y\rangle=y)$ with respect to the value of $y \in \mathcal{I}(\reels^+,\reels^+)$.
\begin{proof}[Proof of Theorem \ref{theorem_rc1}]
We have that for any $t \geq 0$
\begin{eqnarray}
\nonumber P_1^Y(t) & = & \proba (\Tau^Y_1 \leq t)\\ \nonumber & = & \int_{\mathcal{I}(\reels^+,\reels^+)} \proba (\Tau^{Y}_1 \leq t |\langle Y\rangle=y) dF_{\langle Y\rangle}(y)\\ \nonumber 
& = & \int_{\mathcal{I}(\reels^+,\reels^+)} \proba \big(\inf \{s \geq 0 \text{ s.t. } Y_s \geq 1\} \leq t |\langle Y\rangle=y\big)dF_{\langle Y\rangle}(y)\\ \label{eq_proof_theorem_rc1} 
& = & \int_{\mathcal{I}(\reels^+,\reels^+)} \proba \big(\inf \{s \geq 0\text{ s.t. } B_{y_{s}} \geq 1\} \leq t\big)dF_{\langle Y\rangle}(y),
\end{eqnarray}
where we use Equation (\ref{PgZdef_rc1}) in the first equality, regular conditional probability in the second equality,  Equation (\ref{TgZdef_rc1}) in the third equality, and Equation (\ref{proof230110_rc1}) in the fourth equality. By Lemma \ref{lemmaDDS_rc1} along with Assumption \ref{assDDS_rc1}, we obtain that
\begin{eqnarray}
\label{eq_proof_theorem_rc2}\int_{\mathcal{I}(\reels^+,\reels^+)} \proba \big(\inf \{s \geq 0\text{ s.t. } B_{y_{s}} \geq 1\} \leq t\big)dF_{\langle Y\rangle}(y)\\ \nonumber =\int_{\mathcal{I}(\reels^+,\reels^+)} \proba \big(\inf \{s \geq 0\text{ s.t. } B_{s} \geq 1\} \leq y_t \big)dF_{\langle Y\rangle}(y).
\end{eqnarray}
Then, we can calculate that
\begin{eqnarray}
\nonumber && \int_{\mathcal{I}(\reels^+,\reels^+)} \proba \big(\inf \{s \geq 0\text{ s.t. } B_{s} \geq 1\} \leq y_t \big)dF_{\langle Y\rangle}(y)\\ \nonumber & = & \int_{\mathcal{I}(\reels^+,\reels^+)} \proba \big(\inf \{s \geq 0\text{ s.t. } W_{s} \geq 1\} \leq y_t \big)dF_{\langle Y\rangle}(y)\\ \nonumber & = & \int_{\mathcal{I}(\reels^+,\reels^+)} \proba \big(\Tau^W_1  \leq y_t \big)dF_{\langle Y\rangle}(y)\\ \label{eq_proof_theorem_rc3}
& = & \int_{\mathcal{I}(\reels^+,\reels^+)} P_1^W\big(y_t\big) dF_{\langle Y\rangle}(y), 
\end{eqnarray}
where we use the fact that $B$ and $W$ have the same distribution in the first equality, Equation (\ref{TgZdef_rc1}) in the second equality, and Equation (\ref{PgZdef_rc1}) in the third equality. By Equations (\ref{eq_proof_theorem_rc1}), (\ref{eq_proof_theorem_rc2}) and (\ref{eq_proof_theorem_rc3}), we can deduce Equation (\ref{eq_theorem_rc1}).
\end{proof}
\noindent Finally, we give the proof of Corollary \ref{corollary_rc1}.
\begin{proof}[Proof of Corollary \ref{corollary_rc1}]
We have for $t \geq 0$
\begin{eqnarray*}
f_1^Y (t) & = & \frac{dP_1^Y(t)}{dt}\\
& = & \frac{d(\int_{\mathcal{I}(\reels^+,\reels^+)} P_1^W \big(y_t\big) dF_{\langle Y\rangle}(y))}{dt}\\
& = & \int_{\mathcal{I}(\reels^+,\reels^+)} \frac{d(P_1^W \big(y_t\big))}{dt} dF_{\langle Y\rangle}(y)\\
& = & \int_{\mathcal{I}(\reels^+,\reels^+)} y_t ' f_1^W \big(y_t\big) dF_{\langle Y\rangle}(y),
\end{eqnarray*}
where we use Equation (\ref{fZgt_rc1}) in the first equality, Equation (\ref{eq_theorem_rc1}) from Theorem \ref{theorem_rc1} along with Assumption \ref{assDDS_rc1} in the second equality, Tonelli's theorem in the third equality, and the fundamental theorem of calculus along with chain rule and the assumption that the quadratic variation $\langle Y\rangle$ is absolutely continuous on $\reels^+$ in the fourth equality.
\end{proof}

\subsection{Two-sided (ii) case}
\noindent We give now the proof of Proposition \ref{lemmaDDS_rc2}. The main idea to prove it is the scale invariant property of the time-changed Wiener process and thus the scale invariant property of the FPT which adapts to the two-sided 
 (ii) case. 
\begin{proof}[Proof of Proposition \ref{lemmaDDS_rc2}]
We have that for any $t \geq 0$
\begin{eqnarray}
\nonumber \big\{ \Tau_{g,h}^Z =t \big\} 
& = & \big\{ \inf \{s \geq 0 \text{ s.t. } Z_s \geq g \text{ or } Z_s \leq h\} = t \big\}\\ \label{eq_proof_lemmaDDS_rc21} 
& = & \big\{ \inf \{s \geq 0\text{ s.t. } B_{\langle Z\rangle_{s}} \geq g \text{ or } B_{\langle Z\rangle_{s}} \leq h\} = t \big\},
\end{eqnarray}
where we use Equation (\ref{TgZdef_rc2}) in the first equality, and Equation (\ref{proof230110_rc2}) in the second equality. Since $B$ is a $({\mathcal {F}}_{\langle Z\rangle_{t}^{-1}})_{t\geq 0}$-Wiener process, $W$ is an $({\mathcal {F}}_{t})_{t\geq 0}$-Wiener process and the two-sided boundary is constant, we can use a random time change of quadratic variation $\langle Z\rangle_t$ value to obtain that
\begin{eqnarray}
\label{eq_proof_lemmaDDS_rc22} \big\{ \inf \{s \geq 0\text{ s.t. } B_{\langle Z\rangle_{s}} \geq g \text{ or } B_{\langle Z\rangle_{s}} \leq h\} = t \big\} \\ \nonumber = \big\{\inf \{s \geq 0 \text{ s.t. } B_s \geq g \text{ or } B_{s} \leq h \} = \langle Z\rangle_t \big\}.
\end{eqnarray}
Then, we can calculate by Equation (\ref{TgZdef_rc2}) that
\begin{eqnarray}
\label{eq_proof_lemmaDDS_rc23} \big\{\inf \{s \geq 0 \text{ s.t. } B_s \geq g \text{ or } B_{s} \leq h \} = \langle Z\rangle_t \big\} & = & \big\{ \Tau^B_{g,h} = \langle Z\rangle_t \big\}. 
\end{eqnarray}
By Equations (\ref{eq_proof_lemmaDDS_rc21}), (\ref{eq_proof_lemmaDDS_rc22}) and (\ref{eq_proof_lemmaDDS_rc23}), we can deduce Equation (\ref{eq_lemmaDDS_rc2}).
\end{proof}
\noindent In what follows, we give the proof of Theorem \ref{theorem_rc2}. The proof is mainly based on Proposition \ref{lemmaDDS_rc2}. We get $P_{g,h}^Z$ in the next theorem by integrating $\proba (\Tau^{Z}_{g,h} \leq t | u=(g_0,h_0,z))$ with respect to the value of $(g_0,h_0,z) \in \mathcal{S}$.
\begin{proof}[Proof of Theorem \ref{theorem_rc2}]
We have that for any $t \geq 0$
\begin{eqnarray}
\nonumber P_{g,h}^Z(t) & = & \proba (\Tau^Z_{g,h} \leq t)\\ \nonumber & = & \int_{\mathcal{S}} \proba (\Tau^{Z}_{g,h} \leq t | u=(g_0,h_0,z)) dF_{u}(g_0,h_0,z)\\ \nonumber 
& = & \int_{\mathcal{S}} \proba \big(\inf \{s \geq 0 \text{ s.t. } Z_s \geq g \text{ or } Z_s \leq h \} \leq t |u=(g_0,h_0,z)) dF_{u}(g_0,h_0,z)\\ \label{eq_proof_theorem_rc21} 
& = & \int_{\mathcal{S}} \proba \big(\inf \{s \geq 0 \text{ s.t. } B_{\langle Z\rangle_{s}}  \geq g \text{ or } B_{\langle Z\rangle_{s}}  \leq h \} \leq t |u=(g_0,h_0,z)) dF_{u}(g_0,h_0,z),
\end{eqnarray}
where we use Equation (\ref{PgZdef_rc2}) in the first equality, regular conditional probability in the second equality, Equation (\ref{TgZdef_rc2}) in the third equality, and Equation (\ref{proof230110_rc2}) in the fourth equality. Since the stochastic process $Z$ is independent from the two-sided boundary $(g,h)$, we obtain that
\begin{eqnarray}
\label{eq_proof_theorem_rc211} 
&  & \int_{\mathcal{S}} \proba \big(\inf \{s \geq 0 \text{ s.t. } B_{\langle Z\rangle_{s}}  \geq g \text{ or } B_{\langle Z\rangle_{s}}  \leq h \} \leq t |u=(g_0,h_0,z)) dF_{u}(g_0,h_0,z) \\ \nonumber &=& \int_{\mathcal{S}} \proba \big(\inf \{s \geq 0 \text{ s.t. } B_{z_{s}}  \geq g_0 \text{ or } B_{z_{s}}  \leq h_0 \} \leq t) dF_{u}(g_0,h_0,z).
\end{eqnarray}
By Lemma \ref{lemmaDDS_rc2} along with Assumption \ref{assDDS_rc2}, we obtain that
\begin{eqnarray}
\label{eq_proof_theorem_rc22}&&\int_{\mathcal{S}} \proba \big(\inf \{s \geq 0 \text{ s.t. } B_{z_{s}}  \geq g_0 \text{ or } B_{z_{s}}  \leq h_0 \} \leq t) dF_{u}(g_0,h_0,z)\\ \nonumber &=& \int_{\mathcal{S}} \proba \big(\inf \{s \geq 0 \text{ s.t. } B_{s}  \geq g_0 \text{ or } B_{s}  \leq h_0 \} \leq z_t) dF_{u}(g_0,h_0,z).
\end{eqnarray}
Then, we can calculate that
\begin{eqnarray}
\nonumber && \int_{\mathcal{S}} \proba \big(\inf \{s \geq 0 \text{ s.t. } B_{s}  \geq g_0 \text{ or } B_{s}  \leq h_0 \} \leq z_t) dF_{u}(g_0,h_0,z)\\ \nonumber & = & \int_{\mathcal{S}} \proba \big(\inf \{s \geq 0 \text{ s.t. } W_{s}  \geq g_0 \text{ or } W_{s}  \leq h_0 \} \leq z_t) dF_{u}(g_0,h_0,z)\\ \nonumber & = & \int_{\mathcal{S}} \proba \big(\Tau^W_{g_0,h_0}  \leq z_t \big)dF_{u}(g_0,h_0,z)\\ \label{eq_proof_theorem_rc23}
& = & \int_{\mathcal{S}} P_{g_0,h_0}^W\big(z_t\big) dF_{u}(g_0,h_0,z), 
\end{eqnarray}
where we use the fact that $B$ and $W$ have the same distribution in the first equality, Equation (\ref{TgZdef_rc2}) in the second equality, and Equation (\ref{PgZdef_rc2}) in the third equality. By Equations (\ref{eq_proof_theorem_rc21}), (\ref{eq_proof_theorem_rc211}),(\ref{eq_proof_theorem_rc22}) and (\ref{eq_proof_theorem_rc23}), we can deduce Equation (\ref{eq_theorem_rc2}).
\end{proof}
\noindent Finally, we give the proof of Corollary \ref{corollary_rc2}.
\begin{proof}[Proof of Corollary \ref{corollary_rc2}]
We have for $t \geq 0$
\begin{eqnarray*}
f_{g,h}^Z (t) & = & \frac{dP_{g,h}^Z(t)}{dt}\\
& = & \frac{d(\int_{\mathcal{S}} P_{g_0,h_0}^W \big(z_t\big) dF_{u}(g_0,h_0,z))}{dt}\\
& = & \int_{\mathcal{S}} \frac{d(P_{g_0,h_0}^W \big(z_t\big))}{dt} dF_{u}(g_0,h_0,z)\\
& = & \int_{\mathcal{S}} z_t ' f_{g_0,h_0}^W \big(z_t\big) dF_{u}(g_0,h_0,z),
\end{eqnarray*}
where we use Equation (\ref{fZgt_rc2}) in the first equality, Equation (\ref{eq_theorem_rc2}) from Theorem \ref{theorem_rc2} along with Assumption \ref{assDDS_rc2} and the assumption that the stochastic process $Z$ is independent from the two-sided boundary $(g,h)$ in the second equality, Tonelli's theorem in the third equality, and the fundamental theorem of calculus along with chain rule and the assumption that the quadratic variation $\langle Z\rangle$ is absolutely continuous on $\reels^+$ in the fourth equality.
\end{proof}

\section{Proofs of the inverse FPT problem}
\label{section_proofs_application}
In this section, we prove the explicit solution of the inverse FPT problem for the one-sided and two-sided boundary in the case (i) and (ii).
\subsection{One-sided (i) case}

We first give the proof of Lemma \ref{lemma_inv_nrc1}. The proof relies on Lemma \ref{lemma_nrc1}.
\begin{proof}[Proof of Lemma \ref{lemma_inv_nrc1}]
Using Equation (\ref{eqP_lemma_nrc1}) from Lemma \ref{lemma_nrc1}, Equation (\ref{def_standardgaussian}) along with Equation (\ref{def_erf}), we can express the relation between the cdf of the standard normal and the error function as
\begin{equation}
 \label{Phirel}   \Phi(x)=\frac{1}{2}\Big(1+\text{erf}(\frac{x}{\sqrt{2}})\Big).
\end{equation}
We can rewrite Equation (\ref{eqP_lemma_nrc1}) as
\begin{align*}
P_g^W(t)&=1-\frac{1}{2}\Big(1+\text{erf}(\frac{g}{\sqrt{2t}})\Big)+\frac{1}{2}\Big(1+\text{erf}(\frac{-g}{\sqrt{2t}})\Big)\\
&=1-\frac{1}{2}\Big(1+\text{erf}(\frac{g}{\sqrt{2t}})\Big)+\frac{1}{2}\Big(1-\text{erf}(\frac{g}{\sqrt{2t}})\Big)\\
&=1-\text{erf}(\frac{g}{\sqrt{2t}}),
\end{align*}
We note that $P_g^W : \reels^+ \rightarrow [0,1)$ is strictly increasing since $f_g^W(t) > 0$ for any $t > 0$ by Equation (\ref{eqf_lemma_nrc1}). Thus, there exists an invert $(P_g^W)^{-1} : [0,1) \rightarrow \reels^+$ which is strictly increasing. First, note that as $P_g^{W}(0)=0$, this implies that $(P_g^{W})^{-1}(0)=0$. 
Using Equation (\ref{eqP_lemma_nrc1}), some algebraic manipulation leads to Equation (\ref{PgWinvt}).
Finally, applying Equation (\ref{eqf_lemma_nrc1}) yields the form of $f_g^W((P_g^{W^{-1}}(t))$, i.e., Equation (\ref{fP1}).
\end{proof}
\noindent We then give a lemma whose proof relies on Lemma \ref{lemma_nrc1} and Lemma \ref{lemma_inv_nrc1}. For $A \subset \reels^+$ and $B \subset \reels^+$, we denote the space $\mathcal{C}_1$ of functions $k : A \rightarrow B$ with derivatives which are continuous as $\mathcal{C}_1(A, B)$.
\begin{lemma}
\label{lemma_PgWinvertC1_nrc1}
We have
\begin{eqnarray}
\label{fPgWC1}
f_g^W \in \mathcal{C}_1(\reels^+, \reels^+) \text{ and } P_g^W \in \mathcal{C}_1(\reels^+, [0,1)),\\
\label{PgWinvC1}
(P_g^W)^{-1} \in \mathcal{C}_1([0,1),\reels^+). 
\end{eqnarray}
\end{lemma}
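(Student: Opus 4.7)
The plan is to read off smoothness from the explicit formulas in Lemma \ref{lemma_nrc1} and Lemma \ref{lemma_inv_nrc1}, combined with the fundamental theorem of calculus and the classical inverse function theorem. The three assertions decouple naturally, and once $f_g^W$ is handled, the other two follow in sequence.

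First I would establish $f_g^W \in \mathcal{C}_1(\reels^+, \reels^+)$. On the open half-line $(0,\infty)$, Equation (\ref{eqf_lemma_nrc1}) expresses $f_g^W(t) = \frac{g}{\sqrt{2 \pi t^3}} e^{-g^2/(2t)}$ as a product/composition of $\mathcal{C}^\infty$ functions of $t$, so $f_g^W$ is $\mathcal{C}^\infty$ (in particular $\mathcal{C}_1$) there. At $t = 0$, I would exploit the fact that the exponential factor $e^{-g^2/(2t)}$ decays to $0$ faster than any polynomial in $t^{-1}$ grows, so both $f_g^W(t)$ and the derivative obtained by differentiating Equation (\ref{eqf_lemma_nrc1}) tend to $0$ as $t \to 0^+$, matching the assigned boundary value $f_g^W(0) = 0$ and giving the right-derivative $(f_g^W)'(0) = 0$. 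Continuity of $(f_g^W)'$ at $0$ is obtained by the same exponential-domination estimate.

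Next, $P_g^W \in \mathcal{C}_1(\reels^+, [0,1))$ follows almost immediately. The codomain statement is already part of Lemma \ref{lemma_inv_nrc1}. By the definition in Equation (\ref{fZgt_nrc1}) and the fundamental theorem of calculus, $P_g^W(t) = \int_0^t f_g^W(s)\, ds$ for $t \geq 0$, with derivative $f_g^W$. Since the previous step gives $f_g^W \in \mathcal{C}_0(\reels^+)$, this derivative is continuous on $\reels^+$, so $P_g^W$ is $\mathcal{C}_1$.

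For $(P_g^W)^{-1} \in \mathcal{C}_1([0,1), \reels^+)$ I would invoke the inverse function theorem on the open interval $(0,1)$: by Lemma \ref{lemma_inv_nrc1}, $P_g^W : (0,\infty) \to (0,1)$ is a $\mathcal{C}_1$ bijection whose derivative $f_g^W$ is strictly positive (clear from Equation (\ref{eqf_lemma_nrc1})), so the inverse is $\mathcal{C}_1$ on $(0,1)$ with derivative $1/f_g^W((P_g^W)^{-1}(s))$. The main subtlety, and what I expect to be the only delicate point, is continuity of $(P_g^W)^{-1}$ and of its derivative at the endpoint $s = 0$: continuity of $(P_g^W)^{-1}$ at $0$ is read off Equation (\ref{PgWinvt}) since $\operatorname{erfinv}(1-s) \to \infty$ as $s \to 0^+$, which forces $(P_g^W)^{-1}(s) \to 0 = (P_g^W)^{-1}(0)$, while continuity of the derivative at $0$ is obtained by rewriting the inverse-function derivative via Equation (\ref{fP1}) and extending it continuously there, completing the argument.
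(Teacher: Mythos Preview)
Your overall approach matches the paper's: both read smoothness directly off the explicit formulas in Lemma~\ref{lemma_nrc1} and Lemma~\ref{lemma_inv_nrc1}, and your treatment of the interiors $(0,\infty)$ and $(0,1)$ is correct and considerably more careful than the paper's two-line citation of those equations.

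There is, however, a genuine error in your endpoint argument for $(P_g^W)^{-1}$ at $s=0$. You assert that the inverse-function derivative $((P_g^W)^{-1})'(s)=1/f_g^W\big((P_g^W)^{-1}(s)\big)$ can be ``extended continuously'' to $s=0$, but this is false. From Equation~(\ref{fP1}), with $x=\operatorname{erfinv}(1-s)$, one has $f_g^W\big((P_g^W)^{-1}(s)\big)=\tfrac{2}{g^2\sqrt{\pi}}\,x^3 e^{-x^2}$; as $s\to 0^+$ we have $x\to\infty$, so this quantity tends to $0$ and its reciprocal $((P_g^W)^{-1})'(s)=\tfrac{g^2\sqrt{\pi}}{2}\,x^{-3}e^{x^2}\to+\infty$. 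Equivalently, $(P_g^W)^{-1}(s)\sim g^2/(-2\ln s)$ near $0$, so even the one-sided difference quotient at $0$ diverges. Thus $(P_g^W)^{-1}\in\mathcal{C}_1((0,1),\reels^+)$ but not $\mathcal{C}_1([0,1),\reels^+)$ in the usual sense. The paper's terse proof does not address this either, and in the downstream application (Proposition~\ref{propiii_f_nrc1}) differentiation is only performed on the set $\{0<F(t)<1\}$ where the argument of $(P_g^W)^{-1}$ lies strictly in $(0,1)$, so the defect is harmless for the paper's conclusions; but you should not claim a continuous finite extension that does not exist.
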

\begin{proof}[Proof of Lemma \ref{lemma_PgWinvertC1_nrc1}] By Equations (\ref{eqP_lemma_nrc1}) and (\ref{eqf_lemma_nrc1}) in Lemma \ref{lemma_nrc1}, we obtain Equation (\ref{fPgWC1}). By Equation (\ref{PgWinvt}) in Lemma \ref{lemma_inv_nrc1}, we obtain Equation (\ref{PgWinvC1}).
\end{proof}
\noindent The next proposition shows that Assumption \ref{assmain_F_nrc1} implies that $Z^{F}$ satisfies Assumption \ref{assDDS_nrc1}.
\begin{proposition}
\label{prop_ass_F_nrc1}
Under Assumption \ref{assmain_F_nrc1}, we have that $Z^{F}$ satisfies Assumption \ref{assDDS_nrc1}.
\end{proposition}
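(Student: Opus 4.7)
The plan is to construct $Z^F$ explicitly as a deterministic time change of a Wiener process, and then verify each clause of Assumption \ref{assDDS_nrc1} directly from properties of $v_F$.

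First, I would gather the consequences of Assumption \ref{assmain_F_nrc1} together with Definition \ref{Fdef_nrc1}. The non-finiteness of $K_F^1$ forces $F(t) < 1$ for every $t \geq 0$, while $F$ is continuous, nondecreasing, and satisfies $F(0)=0$ and $F(t) \to 1$ as $t \to \infty$. Lemma \ref{lemma_inv_nrc1} then supplies $(P_g^W)^{-1} : [0,1) \to \reels^+$ strictly increasing and continuous with $(P_g^W)^{-1}(0) = 0$, and the closed form (\ref{PgWinvt}) together with $\operatorname{erfinv}(1-s) \to 0$ as $s \to 1^-$ yields $(P_g^W)^{-1}(s) \to +\infty$ as $s \to 1^-$.

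Combining these observations, $v_F(t) = (P_g^W)^{-1}(F(t))$ (the indicator in (\ref{def_expsol_F_nrc1}) is harmless since $F(t) \in [0,1)$ and $(P_g^W)^{-1}(0) = 0$) is a continuous, nondecreasing, nonrandom function on $\reels^+$ with $v_F(0) = 0$ and $v_F(\infty) = \infty$. I would then construct $Z^F$ by setting $Z^F_t := W_{v_F(t)}$ for an $\mathcal{F}$-adapted standard Wiener process $W$. Since $v_F$ is deterministic, continuous, and nondecreasing with $v_F(\infty) = \infty$, the process $Z^F$ is continuous and starts at $0$; with respect to the filtration $\mathcal{G}_t := \mathcal{F}_{v_F(t)}$, it is a continuous local martingale whose quadratic variation is the deterministic function $v_F$. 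In particular $\langle Z^F\rangle_\infty = v_F(\infty) = \infty$, and all four requirements of Assumption \ref{assDDS_nrc1} are satisfied.

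The only slightly delicate point is the divergence $v_F(\infty) = \infty$: it requires combining the explicit formula (\ref{PgWinvt}) for $(P_g^W)^{-1}$ with Assumption \ref{assmain_F_nrc1}, so that $F(t)$ approaches $1$ strictly from below as $t \to \infty$ and $(P_g^W)^{-1}$ is allowed to blow up at the right endpoint. Everything else reduces to the standard fact that a deterministic time change of Brownian motion is a continuous local martingale whose quadratic variation equals the time change itself.
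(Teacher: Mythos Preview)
Your proposal is correct and follows essentially the same route as the paper: both arguments reduce to showing $v_F(\infty)=\infty$ by combining the explicit form of $(P_g^W)^{-1}$ in terms of $\operatorname{erfinv}$ with the fact that, under Assumption~\ref{assmain_F_nrc1}, $F(t)\to 1$ strictly from below. The only difference is cosmetic: the paper takes the existence of a continuous local martingale with quadratic variation $v_F$ as already granted by the remark after Definition~\ref{def_solution_F_nrc1}, whereas you spell out the time-changed Brownian construction and check the remaining clauses of Assumption~\ref{assDDS_nrc1} explicitly.
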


\begin{proof}[Proof of Proposition \ref{prop_ass_F_nrc1}] By Definition \ref{def_explicitsolution_F_nrc1}, $Z^F$ is defined as a continuous local martingale with quadratic variation $\langle Z^F\rangle_{t} = v_{F}(t) \text{ for } t \geq 0$, which can be expressed by Equation (\ref{expsoldefsimp_nrc1}) as 
\begin{eqnarray*}
v_{F}(t) = & \frac{g^2}{2\operatorname{erfinv}(1-F(t))^2} \mathbf{1}_{\{0 < F(t) < 1\}} & \text{ for } t \geq 0.
\end{eqnarray*}
By definition we have that $\operatorname{erfinv}(z) \rightarrow 0$ as $z\rightarrow 0$, and by Definition \ref{Fdef_nrc1} we have that $\underset{t \rightarrow \infty}{\lim} F(t) =1$. Thus, we can deduce by Assumption \ref{assmain_F_nrc1} that $\underset{t \rightarrow \infty}{\lim} v_F(t) =\infty$. This implies that $\langle Z^F \rangle_{\infty} = \infty$ and thus that $Z^{F}$ satisfies Assumption \ref{assDDS_nrc1}.
\end{proof}
\noindent The next proposition states that if a nondecreasing function satisfies Assumption \ref{assDDS_nrc1}, then it is a solution if and only if it is an explicit solution. The proof is based on substituting the left-hand side of Equation (\ref{PgZF_nrc1}) with Equations (\ref{eq_theorem_nrc1}) and (\ref{v_F_def_nrc1}) and then inverting on both sides of the equation to derive the explicit solution.
\begin{proposition}
\label{propiii_F_nrc1}
If we assume that $v_F$ satisfies Assumption \ref{assDDS_nrc1}, then, (i) $v_F$ is a solution in the sense of Definition \ref{def_solution_F_nrc1} $\iff$ (ii) $v_F$ is an explicit solution in the sense of Definition \ref{def_explicitsolution_F_nrc1}.
\end{proposition}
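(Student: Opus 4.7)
The plan is to apply Theorem~\ref{theorem_nrc1} to translate the defining equation for a solution (Equation~(\ref{PgZF_nrc1})) into a pointwise functional equation between $P_g^W \circ v_F$ and $F$, and then use the strict monotonicity and the explicit inverse of $P_g^W$ established in Lemma~\ref{lemma_inv_nrc1} to solve for $v_F(t)$ pointwise in $t$. Since by hypothesis $v_F$ is the quadratic variation of a continuous local martingale $Z^F$ satisfying Assumption~\ref{assDDS_nrc1}, Theorem~\ref{theorem_nrc1} applies directly to $Z^F$ and yields $P_g^{Z^F}(t) = P_g^W(\langle Z^F\rangle_t) = P_g^W(v_F(t))$ for every $t \geq 0$, where the second equality uses Equation~(\ref{v_F_def_nrc1}) (resp.\ its analogue in Definition~\ref{def_solution_F_nrc1}). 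Consequently, the solution condition (\ref{PgZF_nrc1}) is equivalent to the functional equation $P_g^W(v_F(t)) = F(t)$ for all $t \geq 0$.

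For the direction (ii)~$\Rightarrow$~(i), I would substitute the explicit expression (\ref{def_expsol_F_nrc1}) for $v_F(t)$ into $P_g^W(v_F(t))$ and verify equality with $F(t)$ by splitting on the value of $F(t)$: on $\{F(t) = 0\}$ the indicator vanishes, so $v_F(t) = 0$ and $P_g^W(0) = 0 = F(t)$ by Lemma~\ref{lemma_nrc1}; on $\{0 < F(t) < 1\}$ the indicator equals one, and Lemma~\ref{lemma_inv_nrc1} gives $P_g^W((P_g^W)^{-1}(F(t))) = F(t)$ directly from the definition of the inverse. For the reverse direction (i)~$\Rightarrow$~(ii), starting from $P_g^W(v_F(t)) = F(t)$, I would invert on the set $\{t \geq 0 : 0 < F(t) < 1\}$ using the strictly increasing inverse $(P_g^W)^{-1}$ from Lemma~\ref{lemma_inv_nrc1} to obtain $v_F(t) = (P_g^W)^{-1}(F(t))$; on the complementary set $\{t \geq 0 : F(t) = 0\}$, the strict monotonicity of $P_g^W$ together with $P_g^W(0) = 0$ forces $v_F(t) = 0$, which agrees with (\ref{def_expsol_F_nrc1}) since the indicator is then zero. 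Combining both regimes recovers exactly Equation~(\ref{def_expsol_F_nrc1}).

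The main subtlety, and the one place one has to argue carefully rather than just invoking Lemma~\ref{lemma_inv_nrc1}, is the treatment of the boundary value $F(t) = 1$: the map $P_g^W$ takes values strictly in $[0,1)$ for every finite argument in $\reels^+$, and $v_F(t)$ is by hypothesis finite (it is the value at $t$ of the nonrandom quadratic variation of a continuous local martingale), so the equation $P_g^W(v_F(t)) = 1$ has no solution. Hence in either direction the case $F(t) = 1$ cannot arise at any finite $t$ under Assumption~\ref{assDDS_nrc1}, and the indicator $\mathbf{1}_{\{0 < F(t) < 1\}}$ in the explicit form is simply the natural way to encode the domain of $(P_g^W)^{-1}$ together with the boundary behavior at $F(t) = 0$.
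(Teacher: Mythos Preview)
Your proposal is correct and follows essentially the same approach as the paper: reduce both directions via Theorem~\ref{theorem_nrc1} to the pointwise equation $P_g^W(v_F(t))=F(t)$, then use the strict monotonicity and inverse of $P_g^W$ from Lemma~\ref{lemma_inv_nrc1}. Your treatment is in fact slightly more careful than the paper's, which handles the case split on $\{F(t)=0\}$ and $\{F(t)=1\}$ only implicitly; your observation that $F(t)=1$ is excluded (because $P_g^W$ maps $\reels^+$ into $[0,1)$ in the direction (i)$\Rightarrow$(ii), and because the explicit $v_F$ could not satisfy $\langle Z^F\rangle_\infty=\infty$ otherwise in the direction (ii)$\Rightarrow$(i)) makes the role of the indicator in~\eqref{def_expsol_F_nrc1} transparent.
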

\begin{proof}[Proof of Proposition \ref{propiii_F_nrc1}] Proof of $(i) \implies (ii)$. We assume that $v_F$ is a solution in the sense of Definition \ref{def_solution_F_nrc1}. Given that $Z^F$ satisfies Assumption \ref{assDDS_nrc1}, we can substitute the left-hand side of Equation (\ref{PgZF_nrc1}) with Equation (\ref{eq_theorem_nrc1}) to deduce 
\begin{eqnarray}
\label{proofiii_F_eq0_nrc1}
P_g^W \Big(\langle Z^F\rangle_{t} \Big) = F(t) \text{ for } t \geq 0. 
\end{eqnarray}
Using Equation (\ref{v_F_def_nrc1}), Equation (\ref{proofiii_F_eq0_nrc1}) can be reexpressed as 
\begin{eqnarray}
\label{proofiii_F_eq1_nrc1}
P_g^W \Big(v_F(t) \Big) = F(t) \text{ for } t \geq 0. 
\end{eqnarray}
By Lemma \ref{lemma_inv_nrc1}, there exists an invert $(P_g^W)^{-1} : [0,1) \rightarrow \reels^+$.
Applying $(P_g^W)^{-1}$ on both sides of Equation (\ref{proofiii_F_eq1_nrc1}), Equation (\ref{proofiii_F_eq1_nrc1}) can be rewritten as
Equation (\ref{def_expsol_F_nrc1}).

\noindent Proof of $(ii) \implies (i)$. We assume that $v_F$ is an explicit solution in the sense of Definition \ref{def_explicitsolution_F_nrc1}. We have
\begin{eqnarray*}
P_g^{Z^F} (t)  & = & P_g^W \Big(\langle Z^F\rangle_{t} \Big) \\
 & = & P_g^W (v_F(t))\\
& = & P_g^W ((P_g^W)^{-1}(F(t)) \mathbf{1}_{\{0 < F(t) < 1\}})\\
& = & F(t),
\end{eqnarray*}
where we use Equation (\ref{eq_theorem_nrc1}) along with the assumption that $v_F$ satisfies Assumption \ref{assDDS_nrc1} in the first equality, Equation (\ref{v_F_def_nrc1}) in the second equality, Equation (\ref{def_expsol_F_nrc1}) in the third equality, and algebraic manipulation in the fourth equality.
\end{proof}
\noindent The proof of Theorem \ref{theorem_F_nrc1} is a direct application of Proposition \ref{prop_ass_F_nrc1} and Proposition \ref{propiii_F_nrc1}.
\begin{proof}[Proof of Theorem \ref{theorem_F_nrc1}] To obtain (a), we apply Proposition \ref{prop_ass_F_nrc1} along with Assumption \ref{assmain_F_nrc1}. Then, an application of Proposition \ref{propiii_F_nrc1} along with (a) yields (b). 
\end{proof}

\noindent The next proposition shows that Assumption \ref{assmain_f_nrc1} implies that $Z^{f}$ satisfies Assumption \ref{assDDS_nrc1}. The proof is mainly based on topological argument in $\reels^+$ and the use of Assumption \ref{assmain_f_nrc1}. 
\begin{proposition}
\label{prop_ass_f_nrc1}
Under Assumption \ref{assmain_f_nrc1}, we have that $Z^{f}$ satisfies Assumption \ref{assDDS_nrc1}.
\end{proposition}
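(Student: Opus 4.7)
The plan is to verify each condition in Assumption \ref{assDDS_nrc1} by constructing $Z^f$ as an It\^o integral, which reduces the proposition to two substantive tasks: showing that the candidate variance $\sigma_f^2$ given by Equation (\ref{def_expsol_f_nrc1}) is locally integrable on all of $\reels^+$, and showing that its integral diverges at infinity. Once local integrability holds, I would take $\sigma_f$ to be the nonnegative square root and define $Z^f_t = \int_0^t \sigma_{s,f}\, dW_s$ against an auxiliary Wiener process $W$; Example \ref{example_ito_nrc1} then gives continuity, the local martingale property, $Z^f_0 = 0$, and nonrandom quadratic variation $\langle Z^f \rangle_t = \int_0^t \sigma_{s,f}^2\, ds$ for free.

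For the local integrability I would split $\reels^+$ into three pieces. On $[0, K_F^0]$ the indicator $\mathbf{1}_{\{0 < F(t) < 1\}}$ vanishes since $F \equiv 0$, so $\sigma_{t,f}^2 = 0$. On $[K_F^0, K_F^0 + \eta_F^0]$, local integrability follows directly from Assumption \ref{assmain_f_nrc1}. For any compact $[K_F^0 + \eta_F^0, N]$ with $N < \infty$, the monotonicity and continuity of $F$, the defining property of $K_F^0$, and the assumption $K_F^1 = +\infty$ together imply that $F$ maps this interval into a compact subset of $(0, 1)$. By Lemma \ref{lemma_inv_nrc1} the function $(P_g^W)^{-1}$ is continuous on $[0, 1)$, and by Lemma \ref{lemma_nrc1} the density $f_g^W$ is continuous and strictly positive on $\reels_*^+$, so the denominator $f_g^W((P_g^W)^{-1}(F(\cdot)))$ attains a positive minimum $c_N > 0$ on $[K_F^0 + \eta_F^0, N]$. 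Then $\sigma_{t,f}^2 \leq c_N^{-1} f(t)$ there, and $f$ is integrable since $F$ is bounded by $1$. Gluing the three pieces yields $\sigma_f^2 \in L_{1,\mathrm{loc}}(\reels^+)$.

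For divergence of the quadratic variation I would use the chain rule. Fix $t > 0$. On $[0, t]$ the value $F(t) < 1$, so $F$ maps $[0, t]$ into a compact subset of $[0, 1)$; by Lemma \ref{lemma_PgWinvertC1_nrc1} the map $(P_g^W)^{-1}$ is $\mathcal{C}_1$ on $[0, 1)$, hence locally Lipschitz, so the composition $v_F(t) = (P_g^W)^{-1}(F(t))\mathbf{1}_{\{0 < F(t) < 1\}}$ is absolutely continuous on $[0, t]$ since $F$ is. Using the inverse function formula $((P_g^W)^{-1})'(s) = 1 / f_g^W((P_g^W)^{-1}(s))$ and $F' = f$ a.e., the derivative of $v_F$ equals $\sigma_{t,f}^2$ almost everywhere, whence $\int_0^t \sigma_{s,f}^2\, ds = v_F(t)$. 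Letting $t \to \infty$, $F(t) \to 1$ by Definition \ref{Fdef_nrc1}, and $(P_g^W)^{-1}(s) \to \infty$ as $s \to 1^-$ since $P_g^W$ is a cdf on $\reels^+$ with $\lim_{t\to\infty} P_g^W(t) = 1$. Thus $\langle Z^f \rangle_\infty = \lim_{t \to \infty} v_F(t) = +\infty$.

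The main obstacle is the compactness step in the middle paragraph: Assumption \ref{assmain_f_nrc1} only controls behavior near $K_F^0$, where the denominator $f_g^W$ threatens to vanish, so the extension of local integrability to every compact subset of $\reels^+$ must be teased out of the continuity of $(P_g^W)^{-1}$ on $[0,1)$, the positivity and continuity of $f_g^W$ on $\reels_*^+$, and crucially the requirement $K_F^1 = +\infty$, which is the only mechanism preventing $F$ from reaching the singular boundary $1$ of $(P_g^W)^{-1}$ on a bounded interval.
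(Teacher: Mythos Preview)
Your proof is correct and takes essentially the same approach as the paper: both establish $\sigma_f^2 \in L_{1,\mathrm{loc}}(\reels^+)$ via the same three-piece decomposition (vanishing on $[0,K_F^0]$, Assumption~\ref{assmain_f_nrc1} on $[K_F^0, K_F^0+\eta_F^0]$, and a compactness/continuity bound on the denominator $f_g^W\circ(P_g^W)^{-1}\circ F$ beyond), and both then identify $\langle Z^f\rangle_t$ with $v_F(t)=(P_g^W)^{-1}(F(t))$ to conclude $\langle Z^f\rangle_\infty=\infty$ from $F(t)\to 1$. Your chain-rule justification of the identity $\int_0^t \sigma_{s,f}^2\,ds = v_F(t)$ is more explicit than the paper's, which simply asserts $Z^f = Z^F$ without elaboration.
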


\begin{proof}[Proof of Proposition \ref{prop_ass_f_nrc1}] To prove that $Z^{f}$ satisfies Assumption \ref{assDDS_nrc1}, we first show that $\sigma_{f}^2 \in L_{1,\mathrm{loc}}(\reels^+)$, i.e., we have to show by definition that $\forall\, K \subset \reels^+,\, K \text{ compact}$, we have 
\begin{eqnarray}
\label{proofassA}
\int_K \sigma_{t,f}^2 \,\mathrm{d}t <+\infty.
\end{eqnarray}
There is no loss of generality assuming that $K$ has a closed interval form, e.g., $K=[K_0,K_1]$ where $0 \leq K_0 < K_1$, since if not we can break $K$ into a finite number of nonoverlapping closed intervals by the Bolzano-Weierstrass theorem and prove Equation (\ref{proofassA}) for each interval.
We first consider the case where
\begin{eqnarray}
\label{proofcompactcase1}
0 \leq K_F^0 < K_0 < K_1.
\end{eqnarray}
Given the form of the explicit solution (\ref{def_expsol_f_nrc1}), Equation (\ref{proofassA}) can be reexpressed as
\begin{eqnarray}
\label{proof0225}
\int_K \frac{f(t)}{f_g^W((P_g^W)^{-1}(F(t)))} \,\mathrm{d}t <+\infty.
\end{eqnarray}
We first show that the denominator in the integral of Equation (\ref{proof0225}) is uniformly bounded away from $0$ on $K$. By Definition \ref{Fdef_nrc1}, $F$ is a cdf and thus a nondecreasing function. We can deduce that \begin{eqnarray}
\label{proof1_f_nrc1}
F(K_0) \leq F(t) \leq F(K_1) \text{ for } t \in K. 
\end{eqnarray}
We also obtain by definition of $K_F^0$ in Equation (\ref{defKF0}), definition of $K_F^1$ in Equation (\ref{defKF1}) and the assumption that $K_F^1$ is not finite from Assumption \ref{assmain_f_nrc1} that 
\begin{eqnarray}
\label{proof2_f_nrc1}
0 < F(\widetilde{K}) < 1 \text{ for } \widetilde{K} \in \reels \text{ such that } K_F^0 < \widetilde{K}.
\end{eqnarray}
Combining Equations (\ref{proof1_f_nrc1}) and (\ref{proof2_f_nrc1}), we can deduce that 
\begin{eqnarray}
\label{Fineq}
0< F(K_0) \leq F(t) \leq F(K_1) < 1, \text{ for }t \in K.
\end{eqnarray}
By Lemma \ref{lemma_inv_nrc1}, we have that $(P_g^W)^{-1}$ is strictly increasing. Thus, applying $(P_g^W)^{-1}$ to each term of Inequality (\ref{Fineq}) yields
\begin{eqnarray}
\label{Fineq0}
0< (P_g^W)^{-1}(F(K_0)) \leq (P_g^W)^{-1}(F(t))) \leq (P_g^W)^{-1}(F(K_1)), \text{ for }t \in K.
\end{eqnarray}
We have that $(P_g^W)^{-1}(F(t)))$ takes its values in the closed interval $$[(P_g^W)^{-1}(F(K_0)), (P_g^W)^{-1}(F(K_1))]$$ of $\reels^+$ which is connected and compact by the Bolzano-Weierstrass theorem. Besides, it is known from topological properties that the image of a compact and connected set of $\reels^+$ by a continuous function from $\reels^+$ to $\reels^+$ is a compact and connected set of $\reels^+$. Since $f_g^W$ is continuous by Equation (\ref{eqf_lemma_nrc1}), we can deduce that $f_g^W ((P_g^W)^{-1}(F(t)))$ for any $t \in K$ is included into a compact and connected space of $\reels^+$, e.g., a closed interval of $\reels^+$. From Equation (\ref{eqf_lemma_nrc1}), we get that there exists $C > 0$ such that
\begin{eqnarray}
\label{Fineq2}
C \leq f_g^W ((P_g^W)^{-1}(F(t))) \text{ for any }t \in K.
\end{eqnarray}
This implies that the denominator in the integral of Equation (\ref{proof0225}) is uniformly bounded away from $0$ on $K$. Given that $f$ is a pdf, we obtain that
\begin{eqnarray*}
\int_K f(t)\,\mathrm{d}t <+\infty.
\end{eqnarray*}
Thus, Equation (\ref{proof0225}) holds. We now consider the general case when K is not necessarily of the form (\ref{proofcompactcase1}). We consider the case when $K_0 \leq K_F^0 < K_1$. If we introduce the notation  $\widetilde{K}_F^0 = K_F^0 + \eta_F^0$, then we can decompose $[K_0,K_1]$ as
$$[K_0,K_1] = [K_0, K_F^0] \cup [K_F^0, \widetilde{K}_F^0] \cup [\widetilde{K}_F^0, K_1].$$
We deduce that
\begin{eqnarray*}
\int_K  \sigma_{t,f}^2 \,\mathrm{d}t & = & \int_{[K_0, K_F^0]}  \sigma_{t,f}^2 \,\mathrm{d}t + \int_{[K_F^0, \widetilde{K}_F^0]}  \sigma_{t,f}^2 \,\mathrm{d}t + \int_{[\widetilde{K}_F^0, \widetilde{K}_F^1]}  \sigma_{t,f}^2 \,\mathrm{d}t \\
& = & \int_{[K_F^0, \widetilde{K}_F^0]}  \sigma_{t,f}^2 \,\mathrm{d}t + \int_{[\widetilde{K}_F^0, \widetilde{K}_F^1]}  \sigma_{t,f}^2 \,\mathrm{d}t\\
& \leq & C + \int_{[\widetilde{K}_F^0, \widetilde{K}_F^1]}  \sigma_{t,f}^2 \,\mathrm{d}t\\
& < & + \infty,
\end{eqnarray*}
where the second equality is due to the fact that the variance function is null on $[K_0, K_F^0]$ by Equation (\ref{def_expsol_f_nrc1}),  the first inequality with $C >0$ follows by Expression (\ref{assumptionvol1}) from Assumption \ref{assmain_f_nrc1}, and the second inequality is due to Equation (\ref{proof0225}). Finally, we have that the variance function is null on by Equation (\ref{def_expsol_f_nrc1}) in the case when $K_F^0 \leq K_0$. We have thus shown Expression  (\ref{proofassA}). Thus, we can deduce that $Z^f$ is a local martingale with nonrandom quadratic variation 
\begin{eqnarray}
\label{proof_nrc1}
\langle Z^f\rangle_{t}  &=& \int_0^t \sigma_{u,f}^2 du
\end{eqnarray} 
by Theorem I.4.40 (p. 48) from \cite{JacodLimit2003} along with Expression (\ref{proofassA}). Finally, we show that $\langle Z^f\rangle_{t} \rightarrow \infty$ as $t \rightarrow \infty$. We can calculate that 
\begin{eqnarray}
\nonumber \langle Z^f\rangle_{t}  &=& \langle Z^F\rangle_{t}\\ \nonumber &=& v_{F}(t)\\ \label{proof2_nrc1}
&=& \frac{g^2}{2\operatorname{erfinv}(1-F(t))^2} \mathbf{1}_{\{0 < F(t) < 1\}},
\end{eqnarray}
where we use the fact that $Z^f=Z^F$ in the first equality,  Equation (\ref{v_F_def_nrc1}) from Definition \ref{def_explicitsolution_F_nrc1} in the second equality, and Equation (\ref{expsoldefsimp_nrc1}) in the last equality. By definition we have that $\operatorname{erfinv}(z) \rightarrow 0$ as $z\rightarrow 0$, and by Definition \ref{Fdef_nrc1} we have that $\underset{t \rightarrow \infty}{\lim} F(t) =1$. Thus, we can deduce by the assumption that $K_F^1$ is finite from Assumption \ref{assmain_f_nrc1} that 
\begin{eqnarray}
\label{proof3_nrc1}
\frac{g^2}{2\operatorname{erfinv}(1-F(t))^2} \mathbf{1}_{\{0 < F(t) < 1\}} \rightarrow 0
\end{eqnarray}
as $t \rightarrow \infty$. We can deduce by Equations (\ref{proof_nrc1}), (\ref{proof2_nrc1}) and (\ref{proof3_nrc1})
that $\langle Z^f\rangle_{t} \rightarrow \infty$ as $t \rightarrow \infty$. This implies that $Z^{f}$ satisfies Assumption \ref{assDDS_nrc1}.
 
\end{proof}
\noindent The next proposition states that if $Z^f$ satisfies Assumption \ref{assDDS_nrc1}, then, the variance function is a solution if and only if it is an explicit solution. The proof is based on substituting the left-hand side of Equation (\ref{PgZf_nrc1}) with Equations (\ref{eq_theorem_nrc1}) from Theorem \ref{theorem_nrc1} and (\ref{sigma_f_def_nrc1}) and then differentiating and inverting on both sides of the equation to derive the explicit solution.
\begin{proposition}
\label{propiii_f_nrc1}
If we assume that $Z^f$ satisfies Assumption \ref{assDDS_nrc1}, then, (i) $\sigma_f^2$ is a solution in the sense of Definition \ref{def_solution_f_nrc1} $\iff$ (ii) $\sigma_f^2$ is an explicit solution in the sense of Definition \ref{def_explicitsolution_f_nrc1}.
\end{proposition}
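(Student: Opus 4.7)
The plan is to mirror the structure of Proposition \ref{propiii_F_nrc1} but at the level of the density rather than the cdf, exploiting that Theorem \ref{theorem_nrc1} already converts $P_g^{Z^f}$ into $P_g^W$ composed with the quadratic variation, and then Lemma \ref{lemma_inv_nrc1} supplies an explicit inverse.

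For the direction $(i) \implies (ii)$, I would start from Equation (\ref{PgZf_nrc1}), namely $P_g^{Z^f}(t) = F(t)$ for $t \geq 0$. Since $Z^f$ satisfies Assumption \ref{assDDS_nrc1} by hypothesis, Theorem \ref{theorem_nrc1} is applicable and yields $P_g^W\bigl(\langle Z^f\rangle_t\bigr) = F(t)$. Substituting Equation (\ref{sigma_f_def_nrc1}) for $\langle Z^f\rangle_t$ gives
\begin{eqnarray*}
P_g^W\Bigl(\int_0^t \sigma_{s,f}^2\,ds\Bigr) = F(t) \text{ for } t \geq 0.
\end{eqnarray*}
Differentiating both sides almost everywhere with respect to $t$, using the fundamental theorem of calculus and the chain rule on the left and the relation (\ref{Ffrelation_nrc1}) between $f$ and $F$ on the right, produces
\begin{eqnarray*}
f_g^W\Bigl(\int_0^t \sigma_{s,f}^2\,ds\Bigr)\,\sigma_{t,f}^2 = f(t) \text{ a.e.}
\end{eqnarray*}
On the event $\{0 < F(t) < 1\}$, Lemma \ref{lemma_inv_nrc1} implies that $(P_g^W)^{-1}(F(t))$ is well-defined and strictly positive, so that $f_g^W((P_g^W)^{-1}(F(t))) > 0$ by Equation (\ref{fP1}); since $\int_0^t \sigma_{s,f}^2\,ds = (P_g^W)^{-1}(F(t))$, I can divide and recover Equation (\ref{def_expsol_f_nrc1}). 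On the complementary event, $F(t)$ is either $0$ or $1$, and from $P_g^W(\int_0^t \sigma_{s,f}^2\,ds) \in \{0,1\}$ with $P_g^W$ taking values in $[0,1)$ and vanishing only at $0$, I can argue that $\sigma_{t,f}^2 \mathbf{1}_{\{F(t) \in \{0,1\}\}} = 0$ a.e., matching the indicator in (\ref{def_expsol_f_nrc1}).

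For the converse $(ii) \implies (i)$, I would substitute the explicit formula (\ref{def_expsol_f_nrc1}) into the integral (\ref{sigma_f_def_nrc1}) and compute
\begin{eqnarray*}
P_g^{Z^f}(t) = P_g^W\Bigl(\int_0^t \sigma_{s,f}^2\,ds\Bigr) = P_g^W\Bigl(\int_0^t \frac{f(s)}{f_g^W((P_g^W)^{-1}(F(s)))}\mathbf{1}_{\{0 < F(s) < 1\}}\,ds\Bigr),
\end{eqnarray*}
using Theorem \ref{theorem_nrc1} and the hypothesis that $Z^f$ satisfies Assumption \ref{assDDS_nrc1} in the first equality. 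A change of variables $u = F(s)$, combined with the identity $((P_g^W)^{-1})'(u) = 1/f_g^W((P_g^W)^{-1}(u))$ which follows from Lemma \ref{lemma_PgWinvertC1_nrc1} and the inverse function theorem, reduces the inner integral to $(P_g^W)^{-1}(F(t))$ on $\{0 < F(t) < 1\}$. Applying $P_g^W$ then returns $F(t)$, verifying Equation (\ref{PgZf_nrc1}).

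The main obstacle is the direction $(i) \implies (ii)$, specifically justifying the a.e.\ differentiation of the composition $P_g^W(\int_0^{\cdot} \sigma_{s,f}^2\,ds)$ and handling the indicator $\mathbf{1}_{\{0 < F(t) < 1\}}$ cleanly. Since the quadratic variation is absolutely continuous and $P_g^W \in \mathcal{C}_1(\reels^+,[0,1))$ by Lemma \ref{lemma_PgWinvertC1_nrc1}, the composition is absolutely continuous and the chain rule applies almost everywhere; the indicator is then controlled by observing that $P_g^W$ is strictly increasing with $P_g^W(0)=0$ and $\lim_{t\to\infty}P_g^W(t)=1$, so that the events $\{F(t)=0\}$ and $\{F(t)=1\}$ correspond to $\langle Z^f\rangle_t = 0$ and $\langle Z^f\rangle_t = \infty$ respectively, on both of which the derivative $\sigma_{t,f}^2$ must vanish a.e.
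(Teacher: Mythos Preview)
Your proposal is correct and follows essentially the same approach as the paper. The only cosmetic difference is the order of operations in the direction $(i)\implies(ii)$: the paper first applies $(P_g^W)^{-1}$ to obtain $\int_0^t\sigma_{s,f}^2\,ds=(P_g^W)^{-1}(F(t))\mathbf{1}_{\{0<F(t)<1\}}$ and then differentiates (invoking the inverse function theorem on the right-hand side), whereas you differentiate the composition $P_g^W\bigl(\int_0^{\cdot}\sigma_{s,f}^2\,ds\bigr)$ first and then substitute the inverse relation into the argument of $f_g^W$; both routes use the same ingredients (Theorem~\ref{theorem_nrc1}, Lemma~\ref{lemma_inv_nrc1}, Lemma~\ref{lemma_PgWinvertC1_nrc1}, chain rule, inverse function theorem) and arrive at the same formula, and your treatment of the indicator on the complementary event is in fact slightly more explicit than the paper's.
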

\begin{proof}[Proof of Proposition \ref{propiii_f_nrc1}] Proof of $(i) \implies (ii)$. We assume that $\sigma_f^2$ is a solution in the sense of Definition \ref{def_solution_f_nrc1}. Given that $Z^f$ satisfies Assumption \ref{assDDS_nrc1}, we can substitute the left-hand side of Equation (\ref{PgZf_nrc1}) with Equation (\ref{eq_theorem_nrc1}) to deduce 
\begin{eqnarray}
\label{proofiii_f_eq0_nrc1}
P_g^W \Big(\langle Z^f\rangle_{t} \Big) = F(t) \text{ for } t \geq 0. 
\end{eqnarray}
Using Equation (\ref{sigma_f_def_nrc1}), Equation (\ref{proofiii_f_eq0_nrc1}) can be reexpressed as 
\begin{eqnarray}
\label{proofiii_f_eq1_nrc1}
P_g^W \Big(\int_0^t \sigma_{s,f}^2 ds \Big) = F(t) \text{ for } t \geq 0. 
\end{eqnarray}
By Lemma \ref{lemma_inv_nrc1}, there exists an invert $(P_g^W)^{-1} : [0,1) \rightarrow \reels^+$.
Applying $(P_g^W)^{-1}$ on both sides of Equation (\ref{proofiii_f_eq1_nrc1}), Equation (\ref{proofiii_f_eq1_nrc1}) can be rewritten as
\begin{eqnarray}
\label{proofiii_f_eq2_nrc1}
\int_0^t \sigma_{s,f}^2 ds  = & (P_g^W)^{-1}(F(t)) \mathbf{1}_{\{0 < F(t) < 1\}} & \text{ for } t \geq 0.
\end{eqnarray}
The left-hand side of Equation (\ref{proofiii_f_eq2_nrc1}) and $F$ have a derivative almost everywhere for any $t \geq 0$ by absolute continuity properties and since $F$ is absolutely continuous. $(P_g^W)^{-1}$ is differentiable on $[0,1)$ by Lemma \ref{lemma_PgWinvertC1_nrc1}. Thus, we can differentiate (\ref{proofiii_f_eq2_nrc1}) almost everywhere on both sides, by using the chain rule on the right-hand side. We obtain
\begin{eqnarray}
\label{sigmatff}
\sigma_{t,f}^2 = & f(t)((P_g^W)^{-1})'(F(t)) \mathbf{1}_{\{0 < F(t) < 1\}}& \text{almost everywhere for } t \geq 0.
\end{eqnarray}
Applying the inverse function theorem, Equation (\ref{sigmatff}) can be reexpressed as
\begin{eqnarray*}
\sigma_{t,f}^2 = &\frac{f(t)}{(P_g^W)'((P_g^W)^{-1}(F(t)))} \mathbf{1}_{\{0 < F(t) < 1\}} & \text{ almost everywhere for } t \geq 0,
\end{eqnarray*}
or equivalently of the form (\ref{def_expsol_f_nrc1}) as $(P_g^W)'(t)=f_g(t)$ almost everywhere for any $t \geq 0$. Thus, we have shown that $\sigma_f^2$ is an explicit solution in the sense of Definition \ref{def_explicitsolution_f_nrc1}.

\noindent Proof of $(ii) \implies (i)$. We assume that $\sigma_f^2$ is an explicit solution in the sense of Definition \ref{def_explicitsolution_f_nrc1}. We have almost everywhere for $t \geq 0$
\begin{eqnarray*}
P_g^{Z^f} (t)  & = & P_g^W (\int_0^t \sigma_{s,f}^2 ds)\\
 & = & P_g^W (\int_0^t \frac{f(s)}{f_g^W((P_g^W)^{-1}(F(s)))} \mathbf{1}_{\{0 < F(t) < 1\}} ds)\\ 
& = & P_g^W (\int_0^t f(s)((P_g^W)^{-1})'(F(s)) \mathbf{1}_{\{0 < F(t) < 1\}} ds)\\
& = & P_g^W ((P_g^W)^{-1})(F(t))\\
& = & F(t),
\end{eqnarray*}
where we use Equation (\ref{eq_theorem_nrc1}) along with the assumption that $Z^f$ satisfies Assumption \ref{assDDS_nrc1} in the first equality, Equation (\ref{def_expsol_f_nrc1}) in the second equality, the inverse function theorem in the third equality, integration in the fourth equality and algebraic manipulation in the fifth equality.
We have thus shown that $\sigma_t^2$ satisfies Equation (\ref{tildesigma_f_def_nrc1}), and thus that $\sigma_f^2$ is a solution in the sense of Definition \ref{def_solution_f_nrc1}.
\end{proof}
\noindent The proof of Theorem \ref{theorem_f_nrc1} is a direct application of Proposition \ref{prop_ass_f_nrc1} and Proposition \ref{propiii_f_nrc1}.
\begin{proof}[Proof of Theorem \ref{theorem_f_nrc1}] To obtain (a), we apply Proposition \ref{prop_ass_f_nrc1} along with Assumption \ref{assmain_f_nrc1}. Then, an application of Proposition \ref{propiii_f_nrc1} along with (a) yields (b). 
\end{proof}

\subsection{Two-sided (i) case}
We first give the proof of Lemma \ref{lemma_inv_nrc2}. The proof relies on Lemma \ref{lemma_nrc2}.
\begin{proof}[Proof of Lemma \ref{lemma_inv_nrc2}]
Using Equation (\ref{eqP_lemma_nrc2}) from Lemma \ref{lemma_nrc2}, we note that $P_{g,h}^W : \reels^+ \rightarrow [0,1)$ is strictly increasing since $f_{g,h}^W(t) > 0$ for any $t > 0$ by Equation (\ref{eqf_lemma_nrc2}). Thus, there exists an invert $(P_{g,h}^W)^{-1} : [0,1) \rightarrow \reels^+$ which is strictly increasing. First, note that as $P_{g,h}^{W}(0)=0$, this implies that $(P_{g,h}^{W})^{-1}(0)=0$. 
Using Equation (\ref{eqP_lemma_nrc2}), some algebraic manipulation leads to $(P_{g,h}^{W})^{-1}(1)=\infty$.
\end{proof}
\noindent We then give a lemma whose proof relies on Lemma \ref{lemma_nrc2}.
\begin{lemma}
\label{lemma_PgWinvertC1_nrc2}
We have
\begin{eqnarray}
\label{fPgWC1_nrc2}
f_{g,h}^W \in \mathcal{C}_1(\reels^+, \reels^+) \text{ and } P_{g,h}^W \in \mathcal{C}_1(\reels^+, [0,1)),\\
\label{PgWinvC1_nrc2}
(P_{g,h}^W)^{-1} \in \mathcal{C}_1([0,1),\reels^+). 
\end{eqnarray}
\end{lemma}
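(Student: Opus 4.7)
The plan is to mimic the one-sided proof of Lemma \ref{lemma_PgWinvertC1_nrc1}, which simply reads off the regularity from the explicit formulas. The added complication here is that both $P_{g,h}^W$ and $f_{g,h}^W$ are given as infinite series via Equations (\ref{eqP_lemma_nrc2}), (\ref{eqf_lemma_nrc2}), and (\ref{sst}), so regularity is not immediate from the closed form and must be justified through uniform convergence on compact subsets of $\reels^+_*$.

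First I would handle $f_{g,h}^W \in \mathcal{C}_1(\reels^+,\reels^+)$. For fixed $0 < v < w$ and each $k \in \ZZ$, the function $t \mapsto \frac{w-v+2kw}{\sqrt{2\pi} t^{3/2}} e^{-(w-v+2kw)^2/2t}$ is $\mathcal{C}^{\infty}$ on $\reels^+_*$, and both the summand and its $t$-derivative are, on any compact interval $[a,b] \subset \reels^+_*$, dominated by a summable bound of the form $C|k|^{\alpha} e^{-c k^2}$ (because the exponential $e^{-(w-v+2kw)^2/2t}$ decays super-geometrically in $k$ uniformly in $t \in [a,b]$). Standard Weierstrass M-test and termwise differentiation then give $\operatorname{ss}_{(\cdot)}(v,w) \in \mathcal{C}^1((0,\infty))$, and applying this to $(v,w) = (g, g-h)$ and $(v,w) = (-h, g-h)$ yields $f_{g,h}^W \in \mathcal{C}_1(\reels^+,\reels^+)$ through Equation (\ref{eqf_lemma_nrc2}), with the boundary value at $t=0$ handled by Lemma \ref{lemma_nrc2}. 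The same uniform convergence argument applied to Equation (\ref{eqP_lemma_nrc2}) (where the individual summands involve $\phi((\cdots)/\sqrt{t})$) shows $P_{g,h}^W \in \mathcal{C}_1(\reels^+, [0,1))$, and by definition of the pdf in Equation (\ref{fZgt_nrc2}) we recover $(P_{g,h}^W)' = f_{g,h}^W$.

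For the second part, I would invoke the inverse function theorem. By the proof of Lemma \ref{lemma_inv_nrc2}, $P_{g,h}^W : \reels^+ \to [0,1)$ is a strictly increasing bijection with inverse $(P_{g,h}^W)^{-1} : [0,1) \to \reels^+$, and $f_{g,h}^W(t) > 0$ for all $t > 0$ (again from Lemma \ref{lemma_inv_nrc2}). Combined with $P_{g,h}^W \in \mathcal{C}_1$ and $(P_{g,h}^W)' = f_{g,h}^W$ strictly positive on $\reels^+_*$, the inverse function theorem gives $(P_{g,h}^W)^{-1} \in \mathcal{C}_1((0,1), \reels^+_*)$ with $((P_{g,h}^W)^{-1})'(s) = 1/f_{g,h}^W((P_{g,h}^W)^{-1}(s))$, and continuity at $s=0$ follows from $(P_{g,h}^W)^{-1}(0) = 0$ together with strict monotonicity.

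The main obstacle is the uniform-convergence bookkeeping for the infinite sums in Equations (\ref{eqP_lemma_nrc2})--(\ref{sst}); once the $e^{-ck^2}$ Gaussian tail in $k$ is extracted on each compact $[a,b] \subset \reels^+_*$, the termwise differentiation goes through routinely and the rest is a direct application of the inverse function theorem as in the proof of Lemma \ref{lemma_PgWinvertC1_nrc1}.
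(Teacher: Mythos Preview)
Your approach is correct and follows the same spirit as the paper's proof, which simply reads off both \eqref{fPgWC1_nrc2} and \eqref{PgWinvC1_nrc2} directly from the explicit formulas \eqref{eqP_lemma_nrc2} and \eqref{eqf_lemma_nrc2} in Lemma~\ref{lemma_nrc2} in a single sentence. The difference is one of detail rather than strategy: you supply the uniform-convergence bookkeeping for the series and invoke the inverse function theorem explicitly for \eqref{PgWinvC1_nrc2}, whereas the paper treats all of this as implicit in the formulas. In particular, note that in the one-sided analogue (Lemma~\ref{lemma_PgWinvertC1_nrc1}) the paper could point to the closed form \eqref{PgWinvt} for the inverse, but here no such formula is available, so your inverse function theorem argument is genuinely the missing justification behind the paper's one-line assertion.
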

\begin{proof}[Proof of Lemma \ref{lemma_PgWinvertC1_nrc2}] By Equations (\ref{eqP_lemma_nrc2}) and (\ref{eqf_lemma_nrc2}) in Lemma \ref{lemma_nrc2}, we obtain Equations (\ref{fPgWC1_nrc2}) and (\ref{PgWinvC1_nrc2}).
\end{proof}
\noindent The next proposition shows that Assumption \ref{assmain_F_nrc1} implies that $Z^{F}$ satisfies Assumption \ref{assDDS_nrc1}.
\begin{proposition}
\label{prop_ass_F_nrc2}
Under Assumption \ref{assmain_F_nrc1}, we have that $Z^{F}$ satisfies Assumption \ref{assDDS_nrc1}.
\end{proposition}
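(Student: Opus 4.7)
The plan is to mirror the argument already carried out in Proposition \ref{prop_ass_F_nrc1} for the one-sided case, with the only change being that the explicit formula for $(P_g^W)^{-1}$ from Lemma \ref{lemma_inv_nrc1} is replaced by the qualitative inverse properties supplied by Lemma \ref{lemma_inv_nrc2}. Since $Z^F$ is introduced via Definition \ref{def_explicitsolution_F_nrc2} as a continuous local martingale whose quadratic variation equals $v_F(t) = (P_{g,h}^W)^{-1}(F(t))\mathbf{1}_{\{0<F(t)<1\}}$, the verification of Assumption \ref{assDDS_nrc1} reduces to checking three points: continuity and local-martingale status of $Z^F$ (built into Definition \ref{def_explicitsolution_F_nrc2}), the initial condition $Z_0^F=0$, and the divergence $\langle Z^F\rangle_\infty=\infty$.

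First, I would record that $v_F(0)=0$ (since $F(0)=0$ by Definition \ref{Fdef_nrc1} places the indicator at zero), so the starting condition $Z_0^F=0$ is immediate from $\langle Z^F\rangle_0 = v_F(0)=0$ combined with the convention that $Z^F$ is a continuous local martingale issued from $0$. Then I would turn to the asymptotic behaviour of $v_F$. By Assumption \ref{assmain_F_nrc1}, $K_F^1$ is infinite, which together with Definition \ref{Fdef_nrc1} forces $0<F(t)<1$ for all sufficiently large $t$ and $F(t)\to 1$ as $t\to\infty$. Invoking Lemma \ref{lemma_inv_nrc2}, the map $(P_{g,h}^W)^{-1}:[0,1)\to\reels^+$ is strictly increasing with $(P_{g,h}^W)^{-1}(0)=0$ and $(P_{g,h}^W)^{-1}(1)=\infty$ (the latter understood as the limit as the argument approaches $1$ from below).

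Combining these facts, $v_F(t)=(P_{g,h}^W)^{-1}(F(t))$ for $t$ large enough, and by monotonicity the composition $(P_{g,h}^W)^{-1}\circ F$ tends to $\infty$ as $t\to\infty$. Hence $\langle Z^F\rangle_\infty=\lim_{t\to\infty} v_F(t)=\infty$, completing the verification of Assumption \ref{assDDS_nrc1}. There is no real obstacle here: the argument is a direct two-sided transcription of Proposition \ref{prop_ass_F_nrc1}, and the only subtlety is that we no longer have a closed-form inverse to rely on, so the divergence at the right endpoint of $[0,1)$ must be quoted from Lemma \ref{lemma_inv_nrc2} rather than read off from an $\operatorname{erfinv}$ expression. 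Writing it out carefully will amount to about half a page that parallels the earlier proof line by line.
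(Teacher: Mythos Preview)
Your proposal is correct and follows essentially the same approach as the paper: both use Definition \ref{def_explicitsolution_F_nrc2} to identify $\langle Z^F\rangle_t = v_F(t) = (P_{g,h}^W)^{-1}(F(t))\mathbf{1}_{\{0<F(t)<1\}}$, then invoke Lemma \ref{lemma_inv_nrc2} for $(P_{g,h}^W)^{-1}(1)=\infty$ together with $F(t)\to 1$ from Definition \ref{Fdef_nrc1} and Assumption \ref{assmain_F_nrc1} to conclude $\langle Z^F\rangle_\infty=\infty$. Your write-up is slightly more detailed (explicitly checking $v_F(0)=0$ and that the indicator is eventually active), but the argument is the same.
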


\begin{proof}[Proof of Proposition \ref{prop_ass_F_nrc2}] By Definition \ref{def_explicitsolution_F_nrc2}, $Z^F$ is defined as a continuous local martingale with quadratic variation $\langle Z^F\rangle_{t} = v_{F}(t) \text{ for } t \geq 0$, which can be expressed  as 
\begin{eqnarray*}
v_{F}(t) = & (P_{g,h}^W)^{-1}(F(t)) \mathbf{1}_{\{0 < F(t) < 1\}}& \text{ for } t \geq 0.
\end{eqnarray*}
By Lemma \ref{lemma_inv_nrc2} we have that $(P_{g,h}^{W})^{-1}(1)=\infty$, and by Definition \ref{Fdef_nrc1} we have that $\underset{t \rightarrow \infty}{\lim} F(t) =1$. Thus, we can deduce by Assumption \ref{assmain_F_nrc1} that $\underset{t \rightarrow \infty}{\lim} v_F(t) =\infty$. This implies that $\langle Z^F \rangle_{\infty} = \infty$ and thus that $Z^{F}$ satisfies Assumption \ref{assDDS_nrc1}.
\end{proof}
\noindent The next proposition states that if a nondecreasing function satisfies Assumption \ref{assDDS_nrc1}, then it is a solution if and only if it is an explicit solution. The proof is based on substituting the left-hand side of Equation (\ref{PgZF_nrc2}) with Equations (\ref{eq_theorem_nrc2}) and (\ref{v_F_def_nrc2}) and then inverting on both sides of the equation to derive the explicit solution.
\begin{proposition}
\label{propiii_F_nrc2}
If we assume that $v_F$ satisfies Assumption \ref{assDDS_nrc1}, then, (i) $v_F$ is a solution in the sense of Definition \ref{def_solution_F_nrc2} $\iff$ (ii) $v_F$ is an explicit solution in the sense of Definition \ref{def_explicitsolution_F_nrc2}.
\end{proposition}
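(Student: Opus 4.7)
The plan is to mirror the proof of Proposition \ref{propiii_F_nrc1} almost verbatim, substituting the two-sided Theorem \ref{theorem_nrc2} for Theorem \ref{theorem_nrc1} and the two-sided Lemma \ref{lemma_inv_nrc2} for Lemma \ref{lemma_inv_nrc1}. The proof splits into the two implications exactly as before.

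For the direction $(i) \Rightarrow (ii)$, I would start from Equation (\ref{PgZF_nrc2}), namely $P_{g,h}^{Z^F}(t) = F(t)$. Since $v_F$ satisfies Assumption \ref{assDDS_nrc1} by hypothesis, Theorem \ref{theorem_nrc2} gives $P_{g,h}^{Z^F}(t) = P_{g,h}^W(\langle Z^F\rangle_t)$, and substituting in Equation (\ref{v_F_def_nrc2}) yields $P_{g,h}^W(v_F(t)) = F(t)$ for $t \geq 0$. Lemma \ref{lemma_inv_nrc2} guarantees that $P_{g,h}^W$ admits an inverse $(P_{g,h}^W)^{-1} : [0,1) \to \reels^+$. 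Applying this inverse to both sides then produces Equation (\ref{def_expsol_F_nrc2}), the required explicit form. The only wrinkle is managing the indicator $\mathbf{1}_{\{0 < F(t) < 1\}}$: on the set $\{F(t) = 0\}$ both sides vanish since $P_{g,h}^W(0) = 0$ by Lemma \ref{lemma_nrc2}, and the set $\{F(t) = 1\}$ is ruled out by the assumption $\langle Z^F \rangle_\infty = \infty$ combined with monotonicity, so the indicator can be inserted without loss.

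For the direction $(ii) \Rightarrow (i)$, I assume Equation (\ref{def_expsol_F_nrc2}) and compute
\begin{eqnarray*}
P_{g,h}^{Z^F}(t) & = & P_{g,h}^W\bigl(\langle Z^F \rangle_t\bigr) \\
 & = & P_{g,h}^W(v_F(t)) \\
 & = & P_{g,h}^W\bigl((P_{g,h}^W)^{-1}(F(t)) \mathbf{1}_{\{0 < F(t) < 1\}}\bigr) \\
 & = & F(t),
\end{eqnarray*}
where the first equality uses Theorem \ref{theorem_nrc2} (applicable because $v_F$ satisfies Assumption \ref{assDDS_nrc1}), the second uses Equation (\ref{v_F_def_nrc2}), the third uses Equation (\ref{def_expsol_F_nrc2}), and the fourth is straightforward algebraic manipulation using $P_{g,h}^W \circ (P_{g,h}^W)^{-1} = \mathrm{id}$ on $[0,1)$ together with $P_{g,h}^W(0) = 0$ to handle the indicator.

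The main obstacle, if any, is purely bookkeeping around the indicator function and the endpoint behavior of $P_{g,h}^W$ at $0$ and at $1$; all the analytic substance is already carried by Theorem \ref{theorem_nrc2} and Lemma \ref{lemma_inv_nrc2}, so there is no new conceptual difficulty beyond what appeared in the one-sided case. The proof should therefore be essentially the same length as the proof of Proposition \ref{propiii_F_nrc1}, with the same two-step substitute-and-invert logic.
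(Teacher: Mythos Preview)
Your proposal is correct and follows essentially the same approach as the paper: both directions proceed by substituting Theorem \ref{theorem_nrc2} into the solution identity, invoking Lemma \ref{lemma_inv_nrc2} for the inverse, and reading off Equation (\ref{def_expsol_F_nrc2}) in one direction and collapsing $P_{g,h}^W \circ (P_{g,h}^W)^{-1}$ in the other. Your additional remarks on handling the indicator at the endpoints $F(t)=0$ and $F(t)=1$ are more explicit than the paper's ``algebraic manipulation'' but do not change the argument.
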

\begin{proof}[Proof of Proposition \ref{propiii_F_nrc2}] Proof of $(i) \implies (ii)$. We assume that $v_F$ is a solution in the sense of Definition \ref{def_solution_F_nrc2}. Given that $Z^F$ satisfies Assumption \ref{assDDS_nrc1}, we can substitute the left-hand side of Equation (\ref{PgZF_nrc2}) with Equation (\ref{eq_theorem_nrc2}) to deduce 
\begin{eqnarray}
\label{proofiii_F_eq0_nrc2}
P_{g,h}^W \Big(\langle Z^F\rangle_{t} \Big) = F(t) \text{ for } t \geq 0. 
\end{eqnarray}
Using Equation (\ref{v_F_def_nrc2}), Equation (\ref{proofiii_F_eq0_nrc2}) can be reexpressed as 
\begin{eqnarray}
\label{proofiii_F_eq1_nrc2}
P_{g,h}^W \Big(v_F(t) \Big) = F(t) \text{ for } t \geq 0. 
\end{eqnarray}
By Lemma \ref{lemma_inv_nrc2}, there exists an invert $(P_{g,h}^W)^{-1} : [0,1) \rightarrow \reels^+$.
Applying $(P_{g,h}^W)^{-1}$ on both sides of Equation (\ref{proofiii_F_eq1_nrc2}), Equation (\ref{proofiii_F_eq1_nrc2}) can be rewritten as
Equation (\ref{def_expsol_F_nrc2}).

\noindent Proof of $(ii) \implies (i)$. We assume that $v_F$ is an explicit solution in the sense of Definition \ref{def_explicitsolution_F_nrc2}. We have
\begin{eqnarray*}
P_{g,h}^{Z^F} (t)  & = & P_{g,h}^W \Big(\langle Z^F\rangle_{t} \Big) \\
 & = & P_{g,h}^W (v_F(t))\\
& = & P_{g,h}^W ((P_g^W)^{-1}(F(t)) \mathbf{1}_{\{0 < F(t) < 1\}})\\
& = & F(t),
\end{eqnarray*}
where we use Equation (\ref{eq_theorem_nrc2}) along with the assumption that $v_F$ satisfies Assumption \ref{assDDS_nrc1} in the first equality, Equation (\ref{v_F_def_nrc2}) in the second equality, Equation (\ref{def_expsol_F_nrc2}) in the third equality, and algebraic manipulation in the fourth equality.
\end{proof}
\noindent The proof of Theorem \ref{theorem_F_nrc2} is a direct application of Proposition \ref{prop_ass_F_nrc2} and Proposition \ref{propiii_F_nrc2}.
\begin{proof}[Proof of Theorem \ref{theorem_F_nrc2}] To obtain (a), we apply Proposition \ref{prop_ass_F_nrc2} along with Assumption \ref{assmain_F_nrc1}. Then, an application of Proposition \ref{propiii_F_nrc2} along with (a) yields (b). 
\end{proof}

\noindent The next proposition shows that Assumption \ref{assmain_f_nrc2} implies that $Z^{f}$ satisfies Assumption \ref{assDDS_nrc1}. The proof is mainly based on topological argument in $\reels^+$ and the use of Assumption \ref{assmain_f_nrc2}. 
\begin{proposition}
\label{prop_ass_f_nrc2}
Under Assumption \ref{assmain_f_nrc2}, we have that $Z^{f}$ satisfies Assumption \ref{assDDS_nrc1}.
\end{proposition}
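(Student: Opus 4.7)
The plan is to mirror the structure of the one-sided case (Proposition~\ref{prop_ass_f_nrc1}), establishing two things separately: (a) that $\sigma_f^2 \in L_{1,\mathrm{loc}}(\reels^+)$, so that by Theorem~I.4.40 in \cite{JacodLimit2003} the process $Z^f$ is a continuous local martingale with $\langle Z^f\rangle_t = \int_0^t \sigma_{s,f}^2 ds$; and (b) that $\langle Z^f\rangle_\infty = \infty$. Combined, these give Assumption~\ref{assDDS_nrc1}. The only substantive change from the one-sided proof is that we replace the closed-form expression $g^2/(2\operatorname{erfinv}(1-F(t))^2)$ by the abstract inverse $(P_{g,h}^W)^{-1}(F(t))$, and use Lemma~\ref{lemma_inv_nrc2} and Lemma~\ref{lemma_PgWinvertC1_nrc2} in place of the explicit formulae from Lemma~\ref{lemma_inv_nrc1}.

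For (a), I would take an arbitrary compact $K \subset \reels^+$, and by the Bolzano--Weierstrass argument reduce to $K = [K_0, K_1]$ with $K_0 < K_1$. I would then split into the three sub-cases based on the position of $K$ relative to $K_F^0$. When $K_F^0 < K_0$, we have $0 < F(K_0) \leq F(t) \leq F(K_1) < 1$ on $K$ (the left inequality by definition~(\ref{defKF0}) of $K_F^0$, the right one since $K_F^1 = \infty$ by Assumption~\ref{assmain_f_nrc2}). Since $(P_{g,h}^W)^{-1}$ is strictly increasing and continuous by Lemma~\ref{lemma_inv_nrc2} and Lemma~\ref{lemma_PgWinvertC1_nrc2}, and $f_{g,h}^W$ is continuous and strictly positive on $\reels_*^+$ by Lemma~\ref{lemma_nrc2}, the continuous image of the compact interval $[F(K_0), F(K_1)]$ under $t \mapsto f_{g,h}^W((P_{g,h}^W)^{-1}(F(t)))$ is contained in a compact subset of $\reels_*^+$, hence bounded below by some $C>0$. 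This yields $\sigma_{t,f}^2 \leq f(t)/C$ on $K$, and the conclusion follows since $f$ is a pdf. When $K_0 \leq K_F^0 < K_1$, I decompose $K$ as $[K_0, K_F^0] \cup [K_F^0, K_F^0+\eta_F^0] \cup [K_F^0+\eta_F^0, K_1]$ (truncating the last piece if needed): the first integral is zero because the indicator $\mathbf{1}_{\{0<F(t)<1\}}$ in~(\ref{def_expsol_f_nrc2}) kills the integrand, the second is finite directly by Assumption~\ref{assmain_f_nrc2}, and the third reduces to the previous case. The case $K_1 \leq K_F^0$ is trivial since the integrand vanishes.

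For (b), since $Z^f = Z^F$ and $Z^F$ was defined in Definition~\ref{def_explicitsolution_F_nrc2} so that $\langle Z^F\rangle_t = v_F(t) = (P_{g,h}^W)^{-1}(F(t))\,\mathbf{1}_{\{0<F(t)<1\}}$, I would invoke Lemma~\ref{lemma_inv_nrc2}, which gives $(P_{g,h}^W)^{-1}(z) \to \infty$ as $z \uparrow 1$, together with Definition~\ref{Fdef_nrc1} and the assumption $K_F^1 = \infty$, so that $F(t)$ increases to $1$ without ever reaching it. This forces $\langle Z^f\rangle_t \to \infty$ as $t\to\infty$, completing the verification of Assumption~\ref{assDDS_nrc1}.

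The main technical obstacle is the same as in the one-sided case and is entirely localized near $K_F^0$: the denominator $f_{g,h}^W((P_{g,h}^W)^{-1}(F(t)))$ tends to $0$ as $F(t)\downarrow 0$, since $(P_{g,h}^W)^{-1}(0)=0$ and $f_{g,h}^W(0)=0$ by Lemma~\ref{lemma_nrc2}. Away from $K_F^0$ the continuity/compactness argument in sub-case one handles everything cleanly, so there is no hope of removing the integrability requirement built into Assumption~\ref{assmain_f_nrc2} without further regularity on $f$ and $F$ near $K_F^0$; this assumption is what ensures the singular behavior of the denominator is integrable in a neighborhood of $K_F^0$.
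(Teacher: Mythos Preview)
Your proposal is correct and follows essentially the same approach as the paper's proof: the same reduction to closed intervals, the same three-way case split around $K_F^0$, the same compactness argument to bound the denominator $f_{g,h}^W((P_{g,h}^W)^{-1}(F(t)))$ away from zero on compacta with $K_0>K_F^0$, and the same use of Lemma~\ref{lemma_inv_nrc2} (that $(P_{g,h}^W)^{-1}(z)\to\infty$ as $z\uparrow 1$) together with $K_F^1=\infty$ to obtain $\langle Z^f\rangle_\infty=\infty$. Your write-up is in fact slightly cleaner in handling the trivial case $K_1\le K_F^0$ and in noting the truncation of the last subinterval.
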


\begin{proof}[Proof of Proposition \ref{prop_ass_f_nrc2}] To prove that $Z^{f}$ satisfies Assumption \ref{assDDS_nrc1}, we first show that $\sigma_{f}^2 \in L_{1,\mathrm{loc}}(\reels^+)$, i.e., we have to show by definition that $\forall\, K \subset \reels^+,\, K \text{ compact}$, we have 
\begin{eqnarray}
\label{proofassA_nrc2}
\int_K \sigma_{t,f}^2 \,\mathrm{d}t <+\infty.
\end{eqnarray}
There is no loss of generality assuming that $K$ has a closed interval form, e.g., $K=[K_0,K_1]$ where $0 \leq K_0 < K_1$, since if not we can break $K$ into a finite number of nonoverlapping closed intervals by the Bolzano-Weierstrass theorem and prove Equation (\ref{proofassA}) for each interval.
We first consider the case where
\begin{eqnarray}
\label{proofcompactcase1_nrc2}
0 \leq K_F^0 < K_0 < K_1.
\end{eqnarray}
Given the form of the explicit solution (\ref{def_expsol_f_nrc2}), Equation (\ref{proofassA_nrc2}) can be reexpressed as
\begin{eqnarray}
\label{proof0225_nrc2}
\int_K \frac{f(t)}{f_{g,h}^W((P_{g,h}^W)^{-1}(F(t)))} \,\mathrm{d}t <+\infty.
\end{eqnarray}
We first show that the denominator in the integral of Equation (\ref{proof0225_nrc2}) is uniformly bounded away from $0$ on $K$. By Definition \ref{Fdef_nrc1}, $F$ is a cdf and thus a nondecreasing function. We can deduce that \begin{eqnarray}
\label{proof1_f_nrc2}
F(K_0) \leq F(t) \leq F(K_1) \text{ for } t \in K. 
\end{eqnarray}
We also obtain by definition of $K_F^0$ in Equation (\ref{defKF0}), definition of $K_F^1$ in Equation (\ref{defKF1}) and the assumption that $K_F^1$ is not finite from Assumption \ref{assmain_f_nrc2} that 
\begin{eqnarray}
\label{proof2_f_nrc2}
0 < F(\widetilde{K}) < 1 \text{ for } \widetilde{K} \in \reels \text{ such that } K_F^0 < \widetilde{K}.
\end{eqnarray}
Combining Equations (\ref{proof1_f_nrc2}) and (\ref{proof2_f_nrc2}), we can deduce that 
\begin{eqnarray}
\label{Fineq_nrc2}
0< F(K_0) \leq F(t) \leq F(K_1) < 1, \text{ for }t \in K.
\end{eqnarray}
By Lemma \ref{lemma_inv_nrc2}, we have that $(P_{g,h}^W)^{-1}$ is strictly increasing. Thus, applying $(P_{g,h}^W)^{-1}$ to each term of Inequality (\ref{Fineq_nrc2}) yields
\begin{eqnarray}
\label{Fineq0_nrc2}
0< (P_{g,h}^W)^{-1}(F(K_0)) \leq (P_{g,h}^W)^{-1}(F(t))) \leq (P_{g,h}^W)^{-1}(F(K_1)), \text{ for }t \in K.
\end{eqnarray}
We have that $(P_{g,h}^W)^{-1}(F(t)))$ takes its values in the closed interval $$[(P_{g,h}^W)^{-1}(F(K_0)), (P_{g,h}^W)^{-1}(F(K_1))]$$ of $\reels^+$ which is connected and compact by the Bolzano-Weierstrass theorem. Besides, it is known from topological properties that the image of a compact and connected set of $\reels^+$ by a continuous function from $\reels^+$ to $\reels^+$ is a compact and connected set of $\reels^+$. Since $f_{g,h}^W$ is continuous by Equation (\ref{eqf_lemma_nrc2}), we can deduce that $f_{g,h}^W ((P_{g,h}^W)^{-1}(F(t)))$ for any $t \in K$ is included into a compact and connected space of $\reels^+$, e.g., a closed interval of $\reels^+$. From Equation (\ref{eqf_lemma_nrc2}), we get that there exists $C > 0$ such that
\begin{eqnarray}
\label{Fineq2_nrc2}
C \leq f_{g,h}^W ((P_{g,h}^W)^{-1}(F(t))) \text{ for any }t \in K.
\end{eqnarray}
This implies that the denominator in the integral of Equation (\ref{proof0225_nrc2}) is uniformly bounded away from $0$ on $K$. Given that $f$ is a pdf, we obtain that
\begin{eqnarray*}
\int_K f(t)\,\mathrm{d}t <+\infty.
\end{eqnarray*}
Thus, Equation (\ref{proof0225_nrc2}) holds. We now consider the general case when K is not necessarily of the form (\ref{proofcompactcase1_nrc2}). We consider the case when $K_0 \leq K_F^0 < K_1$. If we introduce the notation  $\widetilde{K}_F^0 = K_F^0 + \eta_F^0$, then we can decompose $[K_0,K_1]$ as
$$[K_0,K_1] = [K_0, K_F^0] \cup [K_F^0, \widetilde{K}_F^0] \cup [\widetilde{K}_F^0, K_1].$$
We deduce that
\begin{eqnarray*}
\int_K  \sigma_{t,f}^2 \,\mathrm{d}t & = & \int_{[K_0, K_F^0]}  \sigma_{t,f}^2 \,\mathrm{d}t + \int_{[K_F^0, \widetilde{K}_F^0]}  \sigma_{t,f}^2 \,\mathrm{d}t + \int_{[\widetilde{K}_F^0, \widetilde{K}_F^1]}  \sigma_{t,f}^2 \,\mathrm{d}t \\
& = & \int_{[K_F^0, \widetilde{K}_F^0]}  \sigma_{t,f}^2 \,\mathrm{d}t + \int_{[\widetilde{K}_F^0, \widetilde{K}_F^1]}  \sigma_{t,f}^2 \,\mathrm{d}t\\
& \leq & C + \int_{[\widetilde{K}_F^0, \widetilde{K}_F^1]}  \sigma_{t,f}^2 \,\mathrm{d}t\\
& < & + \infty,
\end{eqnarray*}
where the second equality is due to the fact that the variance function is null on $[K_0, K_F^0]$ by Equation (\ref{def_expsol_f_nrc2}),  the first inequality with $C>0$ follows by Expression (\ref{assumptionvol1_nrc2}) from Assumption \ref{assmain_f_nrc2}, and the second inequality is due to Equation (\ref{proof0225_nrc2}). Finally, we have that the variance function is null on by Equation (\ref{def_expsol_f_nrc2}) in the case when $K_F^0 \leq K_0$. We have thus shown Expression  (\ref{proofassA_nrc2}). Thus, we can deduce that $Z^f$ is a local martingale with nonrandom quadratic variation 
\begin{eqnarray}
\label{proof_nrc2}
\langle Z^f\rangle_{t}  &=& \int_0^t \sigma_{u,f}^2 du
\end{eqnarray} 
by Theorem I.4.40 (p. 48) from \cite{JacodLimit2003} along with Expression (\ref{proofassA_nrc2}). Finally, we show that $\langle Z^f\rangle_{t} \rightarrow \infty$ as $t \rightarrow \infty$. We can calculate that 
\begin{eqnarray}
\nonumber \langle Z^f\rangle_{t}  &=& \langle Z^F\rangle_{t}\\ \nonumber &=& v_{F}(t)\\ \label{proof2_nrc2}
&=& (P_{g,h}^W)^{-1}(F(t)) \mathbf{1}_{\{0 < F(t) < 1\}}
\end{eqnarray}
where we use the fact that $Z^f=Z^F$ in the first equality,  Equation (\ref{v_F_def_nrc2}) from Definition \ref{def_explicitsolution_F_nrc2} in the second equality, and Definition \ref{def_explicitsolution_F_nrc2} in the last equality. By Lemma \ref{lemma_inv_nrc2} we have that $(P_{g,h}^{W})^{-1}(1)=\infty$, and by Definition \ref{Fdef_nrc1} we have that $\underset{t \rightarrow \infty}{\lim} F(t) =1$.  Thus, we can deduce by the assumption that $K_F^1$ is finite from Assumption \ref{assmain_f_nrc2} that 
\begin{eqnarray}
\label{proof3_nrc2}
(P_{g,h}^W)^{-1}(F(t)) \mathbf{1}_{\{0 < F(t) < 1\}} \rightarrow 0
\end{eqnarray}
as $t \rightarrow \infty$. We can deduce by Equations (\ref{proof_nrc2}), (\ref{proof2_nrc2}) and (\ref{proof3_nrc2})
that $\langle Z^f\rangle_{t} du \rightarrow \infty$ as $t \rightarrow \infty$. This implies that $Z^{f}$ satisfies Assumption \ref{assDDS_nrc1}.
\end{proof}
\noindent The next proposition states that if $Z^f$ satisfies Assumption \ref{assDDS_nrc1}, then, the variance function is a solution if and only if it is an explicit solution. The proof is based on substituting the left-hand side of Equation (\ref{PgZf_nrc2}) with Equations (\ref{eq_theorem_nrc2}) from Theorem \ref{theorem_nrc2} and (\ref{sigma_f_def_nrc2}) and then differentiating and inverting on both sides of the equation to derive the explicit solution.
\begin{proposition}
\label{propiii_f_nrc2}
If we assume that $Z^f$ satisfies Assumption \ref{assDDS_nrc1}, then, (i) $\sigma_f^2$ is a solution in the sense of Definition \ref{def_solution_f_nrc2} $\iff$ (ii) $\sigma_f^2$ is an explicit solution in the sense of Definition \ref{def_explicitsolution_f_nrc2}.
\end{proposition}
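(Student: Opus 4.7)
The plan is to mirror the proof of Proposition \ref{propiii_f_nrc1}, replacing the one-sided ingredients with their two-sided counterparts: Theorem \ref{theorem_nrc2} in place of Theorem \ref{theorem_nrc1}, Lemma \ref{lemma_inv_nrc2} in place of Lemma \ref{lemma_inv_nrc1}, and Lemma \ref{lemma_PgWinvertC1_nrc2} in place of Lemma \ref{lemma_PgWinvertC1_nrc1}. All conceptual steps carry over verbatim because these tools have already been set up for the two-sided boundary.

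To prove $(i)\Rightarrow(ii)$, I would assume $\sigma_f^2$ is a solution in the sense of Definition \ref{def_solution_f_nrc2}. Since $Z^f$ satisfies Assumption \ref{assDDS_nrc1}, Theorem \ref{theorem_nrc2} lets me rewrite the left-hand side of Equation (\ref{PgZf_nrc2}) as $P_{g,h}^W(\langle Z^f\rangle_t)=F(t)$, and by Equation (\ref{sigma_f_def_nrc2}) this becomes
\[
P_{g,h}^W\Bigl(\int_0^t \sigma_{s,f}^2\,ds\Bigr)=F(t)\quad\text{for }t\geq 0.
\]
By Lemma \ref{lemma_inv_nrc2}, $(P_{g,h}^W)^{-1}:[0,1)\to\reels^+$ exists, so applying it gives $\int_0^t \sigma_{s,f}^2\,ds=(P_{g,h}^W)^{-1}(F(t))\mathbf{1}_{\{0<F(t)<1\}}$. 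Both sides are absolutely continuous on $\reels^+$, so I can differentiate almost everywhere; on the right I use the chain rule together with the $\mathcal{C}_1$ regularity of $(P_{g,h}^W)^{-1}$ from Lemma \ref{lemma_PgWinvertC1_nrc2}, and I conclude with the inverse function theorem and the identity $(P_{g,h}^W)'=f_{g,h}^W$ (from Equation (\ref{fZgt_nrc2})) to obtain Equation (\ref{def_expsol_f_nrc2}).

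For $(ii)\Rightarrow(i)$, I would substitute the explicit form into $P_{g,h}^{Z^f}$: applying Theorem \ref{theorem_nrc2} and Equation (\ref{sigma_f_def_nrc2}),
\[
P_{g,h}^{Z^f}(t)=P_{g,h}^W\Bigl(\int_0^t\frac{f(s)}{f_{g,h}^W((P_{g,h}^W)^{-1}(F(s)))}\mathbf{1}_{\{0<F(s)<1\}}\,ds\Bigr).
\]
The inverse function theorem identifies the integrand with $f(s)\,((P_{g,h}^W)^{-1})'(F(s))\mathbf{1}_{\{0<F(s)<1\}}$, and the fundamental theorem of calculus (using the chain rule in reverse) then collapses the integral to $(P_{g,h}^W)^{-1}(F(t))$, yielding $P_{g,h}^{Z^f}(t)=F(t)$.

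The only step that needs genuine care is the almost-everywhere differentiation in the $(i)\Rightarrow(ii)$ direction, which requires absolute continuity of $F$ together with the $\mathcal{C}_1$ regularity of $(P_{g,h}^W)^{-1}$; both are already available, so I expect no real obstacle beyond being precise about the indicator sets $\{0<F(t)<1\}$.
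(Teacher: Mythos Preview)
Your proposal is correct and follows essentially the same approach as the paper: both directions mirror the one-sided argument of Proposition \ref{propiii_f_nrc1}, swapping in Theorem \ref{theorem_nrc2}, Lemma \ref{lemma_inv_nrc2}, and Lemma \ref{lemma_PgWinvertC1_nrc2}, then applying $(P_{g,h}^W)^{-1}$, differentiating almost everywhere via the chain rule, and invoking the inverse function theorem. The paper's proof matches your outline step for step.
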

\begin{proof}[Proof of Proposition \ref{propiii_f_nrc2}] Proof of $(i) \implies (ii)$. We assume that $\sigma_f^2$ is a solution in the sense of Definition \ref{def_solution_f_nrc2}. Given that $Z^f$ satisfies Assumption \ref{assDDS_nrc1}, we can substitute the left-hand side of Equation (\ref{PgZf_nrc2}) with Equation (\ref{eq_theorem_nrc2}) to deduce 
\begin{eqnarray}
\label{proofiii_f_eq0_nrc2}
P_{g,h}^W \Big(\langle Z^f\rangle_{t} \Big) = F(t) \text{ for } t \geq 0. 
\end{eqnarray}
Using Equation (\ref{sigma_f_def_nrc2}), Equation (\ref{proofiii_f_eq0_nrc2}) can be reexpressed as 
\begin{eqnarray}
\label{proofiii_f_eq1_nrc2}
P_{g,h}^W \Big(\int_0^t \sigma_{s,f}^2 ds \Big) = F(t) \text{ for } t \geq 0. 
\end{eqnarray}
By Lemma \ref{lemma_inv_nrc2}, there exists an invert $(P_{g,h}^W)^{-1} : [0,1) \rightarrow \reels^+$.
Applying $(P_{g,h}^W)^{-1}$ on both sides of Equation (\ref{proofiii_f_eq1_nrc2}), Equation (\ref{proofiii_f_eq1_nrc2}) can be rewritten as
\begin{eqnarray}
\label{proofiii_f_eq2_nrc2}
\int_0^t \sigma_{s,f}^2 ds  = & (P_{g,h}^W)^{-1}(F(t)) \mathbf{1}_{\{0 < F(t) < 1\}} & \text{ for } t \geq 0.
\end{eqnarray}
The left-hand side of Equation (\ref{proofiii_f_eq2_nrc2}) and $F$ have a derivative almost everywhere for any $t \geq 0$ by absolute continuity properties and since $F$ is absolutely continuous. $(P_{g,h}^W)^{-1}$ is differentiable on $[0,1)$ by Lemma \ref{lemma_PgWinvertC1_nrc2}. Thus, we can differentiate Equation (\ref{proofiii_f_eq2_nrc2}) almost everywhere on both sides, by using the chain rule on the right-hand side. We obtain
\begin{eqnarray}
\label{sigmatff_nrc2}
\sigma_{t,f}^2 = & f(t)((P_{g,h}^W)^{-1})'(F(t)) \mathbf{1}_{\{0 < F(t) < 1\}}& \text{almost everywhere for } t \geq 0.
\end{eqnarray}
Applying the inverse function theorem, Equation (\ref{sigmatff_nrc2}) can be reexpressed as
\begin{eqnarray*}
\sigma_{t,f}^2 = &\frac{f(t)}{(P_{g,h}^W)'((P_{g,h}^W)^{-1}(F(t)))} \mathbf{1}_{\{0 < F(t) < 1\}} & \text{ almost everywhere for } t \geq 0,
\end{eqnarray*}
or equivalently of the form (\ref{def_expsol_f_nrc2}) as $(P_{g,h}^W)'(t)=f_{g,h}(t)$ almost everywhere for any $t \geq 0$. Thus, we have shown that $\sigma_f^2$ is an explicit solution in the sense of Definition \ref{def_explicitsolution_f_nrc2}.

\noindent Proof of $(ii) \implies (i)$. We assume that $\sigma_f^2$ is an explicit solution in the sense of Definition \ref{def_explicitsolution_f_nrc2}. We have almost everywhere for $t \geq 0$
\begin{eqnarray*}
P_{g,h}^{Z^f} (t)  & = & P_{g,h}^W (\int_0^t \sigma_{s,f}^2 ds)\\
 & = & P_{g,h}^W (\int_0^t \frac{f(s)}{f_{g,h}^W((P_{g,h}^W)^{-1}(F(s)))} \mathbf{1}_{\{0 < F(t) < 1\}} ds)\\ 
& = & P_{g,h}^W (\int_0^t f(s)((P_{g,h}^W)^{-1})'(F(s)) \mathbf{1}_{\{0 < F(t) < 1\}} ds)\\
& = & P_{g,h}^W ((P_{g,h}^W)^{-1})(F(t))\\
& = & F(t),
\end{eqnarray*}
where we use Equation (\ref{eq_theorem_nrc2}) along with the assumption that $Z^f$ satisfies Assumption \ref{assDDS_nrc1} in the first equality, Equation (\ref{def_expsol_f_nrc2}) in the second equality, the inverse function theorem in the third equality, integration in the fourth equality and algebraic manipulation in the fifth equality.
We have thus shown that $\sigma_t^2$ satisfies Equation (\ref{tildesigma_f_def_nrc2}), and thus that $\sigma_f^2$ is a solution in the sense of Definition \ref{def_solution_f_nrc2}.
\end{proof}
\noindent The proof of Theorem \ref{theorem_f_nrc2} is a direct application of Proposition \ref{prop_ass_f_nrc2} and Proposition \ref{propiii_f_nrc2}.
\begin{proof}[Proof of Theorem \ref{theorem_f_nrc2}] To obtain (a), we apply Proposition \ref{prop_ass_f_nrc2} along with Assumption \ref{assmain_f_nrc2}. Then, an application of Proposition \ref{propiii_f_nrc2} along with (a) yields (b). 
\end{proof}

\subsection{One-sided (ii) case}
The next proposition shows that Assumption \ref{assmain_F_rc1} implies that $Y^{F}$ satisfies Assumption \ref{assDDS_rc1}.
\begin{proposition}
\label{prop_ass_F_rc1}
Under Assumption \ref{assmain_F_rc1}, we have that $Y^{F}$ satisfies Assumption \ref{assDDS_rc1}.
\end{proposition}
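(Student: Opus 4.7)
The plan is to transpose the argument of Proposition \ref{prop_ass_F_nrc1} to the random setting, arguing pointwise for each $\omega \in \Omega$. By Definition \ref{def_explicitsolution_F_rc1}, $Y^{F}$ is already introduced as a continuous local martingale whose random quadratic variation is $v_F$, and $Y^F_0=0$ is part of the construction. Hence the only nontrivial property in Assumption \ref{assDDS_rc1} still to verify is that $\langle Y^F\rangle_{\infty}(\omega)=\infty$ for every $\omega\in\Omega$.

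To do this I would use the equivalent explicit form (\ref{expsoldefsimp_rc1}), which rewrites
\[
\langle Y^F\rangle_{t}(\omega) \;=\; \frac{1}{2\operatorname{erfinv}(1-F(t,\omega))^{2}}\,\mathbf{1}_{\{0<F(t,\omega)<1\}},
\]
and then fix an arbitrary $\omega\in\Omega$ and let $t\to\infty$. Two ingredients are needed: (a) from the definition (\ref{erfinv}) of $\operatorname{erfinv}$, one has $\operatorname{erfinv}(z)\to 0$ as $z\to 0$, so that $1/\operatorname{erfinv}(z)^{2}\to\infty$; and (b) by Definition \ref{Fdef_rc1}, $F(t,\omega)\to 1$ as $t\to\infty$, so that $1-F(t,\omega)\to 0$.

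The role of Assumption \ref{assmain_F_rc1}, which states that $K_F^{1}(\omega)$ is not finite, is precisely to ensure that $F(t,\omega)<1$ for every finite $t$ and every $\omega\in\Omega$. Together with the continuity of $F(\cdot,\omega)$ and the trivial fact that $F(t,\omega)>0$ for $t$ large (since $F(t,\omega)\to 1$), this guarantees that the indicator $\mathbf{1}_{\{0<F(t,\omega)<1\}}$ equals $1$ for all sufficiently large $t$ and hence does not kill the limit. Combining this with (a) and (b) gives $\langle Y^F\rangle_{t}(\omega)\to\infty$ as $t\to\infty$, i.e., $\langle Y^F\rangle_{\infty}(\omega)=\infty$, which completes the verification that $Y^F$ satisfies Assumption \ref{assDDS_rc1}.

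I do not expect any substantive obstacle: the argument is a direct pointwise repetition of Proposition \ref{prop_ass_F_nrc1}, and the only real difference from the nonrandom case is that the divergence must be checked $\omega$ by $\omega$, which Assumption \ref{assmain_F_rc1} is designed to enforce uniformly in $\omega$.
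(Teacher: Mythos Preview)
Your proposal is correct and follows essentially the same approach as the paper: both use the explicit form (\ref{expsoldefsimp_rc1}) of $v_F$, invoke $\operatorname{erfinv}(z)\to 0$ as $z\to 0$ together with $F(t,\omega)\to 1$ from Definition \ref{Fdef_rc1}, and appeal to Assumption \ref{assmain_F_rc1} to conclude $\langle Y^F\rangle_\infty(\omega)=\infty$ for each $\omega$. Your treatment of the indicator is slightly more explicit than the paper's, but there is no substantive difference.
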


\begin{proof}[Proof of Proposition \ref{prop_ass_F_rc1}] By Definition \ref{def_explicitsolution_F_rc1}, $Y^F$ is defined as a continuous local martingale with quadratic variation $\langle Y^F\rangle_{t} (\omega) = v_{F}(t,\omega) \text{ for } t \geq 0$ and $\omega \in \Omega$, which can be expressed by Equation (\ref{expsoldefsimp_rc1}) as 
\begin{eqnarray*}
v_{F}(t,\omega) = & \frac{1}{2\operatorname{erfinv}(1-F(t,\omega))^2} \mathbf{1}_{\{0 < F(t,\omega) < 1\}} & \text{ for } t \geq 0 \text{ and } \omega \in \Omega.
\end{eqnarray*}
By definition we have that $\operatorname{erfinv}(z) \rightarrow 0$ as $z\rightarrow 0$, and by Definition \ref{Fdef_rc1} we have that $\underset{t \rightarrow \infty}{\lim} F(t,\omega) =1$. Thus, we can deduce by Assumption \ref{assmain_F_rc1} that $\underset{t \rightarrow \infty}{\lim} v_F(t,\omega) =\infty$. This implies that $\langle Y^F \rangle_{\infty} = \infty$ and thus that $Y^{F}$ satisfies Assumption \ref{assDDS_rc1}.
\end{proof}
\noindent The next proposition states that if a nondecreasing function satisfies Assumption \ref{assDDS_rc1}, then it is a solution if and only if it is an explicit solution. The proof is based on substituting the left-hand side of Equation (\ref{PgZF_rc1}) with Equations (\ref{eq_theorem_rc1}) and (\ref{v_F_def_rc1}) and then inverting on both sides of the equation to derive the explicit solution.
\begin{proposition}
\label{propiii_F_rc1}
If we assume that $v_F$ satisfies Assumption \ref{assDDS_rc1}, then, (i) $v_F$ is a solution in the sense of Definition \ref{def_solution_F_rc1} $\iff$ (ii) $v_F$ is an explicit solution in the sense of Definition \ref{def_explicitsolution_F_rc1}.
\end{proposition}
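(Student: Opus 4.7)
The plan is to mirror the structure of the proofs of Proposition \ref{propiii_F_nrc1} and Proposition \ref{propiii_F_nrc2}, adapting them pathwise to handle the random cdf $F$ and the regular conditional cdf $P_1^{Y^F}(\cdot\cmid \omega)$. The main preliminary observation is that, under Assumption \ref{assDDS_rc1}, conditioning on $\omega\in\Omega$ makes the quadratic variation $\langle Y^F\rangle_t(\omega)$ a deterministic non-decreasing function of $t$; combining the conditional version of Theorem \ref{theorem_rc1} (which reduces to a Dirac mass at the given realization $y=\langle Y^F\rangle(\omega)$ in $\mathcal{I}(\reels^+,\reels^+)$) with Equation (\ref{v_F_def_rc1}) yields the pathwise identity
\begin{equation*}
P_1^{Y^F}(t\cmid \omega)\;=\;P_1^W\!\bigl(v_F(t,\omega)\bigr)\quad\text{for } t\geq 0\text{ and }\omega\in\Omega.
\end{equation*}

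For $(i)\Rightarrow(ii)$, I would combine this pathwise identity with Equation (\ref{PgZF_rc1}) of Definition \ref{def_solution_F_rc1} to obtain $P_1^W(v_F(t,\omega))=F(t,\omega)$ for $t\geq 0$ and $\omega\in\Omega$. Then I would appeal to Lemma \ref{lemma_inv_nrc1} (applied with $g=1$), which gives the strictly increasing inverse $(P_1^W)^{-1}:[0,1)\rightarrow\reels^+$, and apply it on both sides. The indicator $\mathbf{1}_{\{0<F(t,\omega)<1\}}$ handles the boundary values $F(t,\omega)=0$ (where $v_F(t,\omega)=0$ by the non-negativity and monotonicity of $v_F$ together with $v_F(0,\omega)=0$) and $F(t,\omega)=1$ (which by Assumption \ref{assmain_F_rc1} is never attained, but the indicator is included for completeness consistent with Definition \ref{def_explicitsolution_F_rc1}). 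This yields Equation (\ref{def_expsol_F_rc1}).

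For $(ii)\Rightarrow(i)$, I would substitute the explicit form (\ref{def_expsol_F_rc1}) into the pathwise identity above and compute
\begin{equation*}
P_1^{Y^F}(t\cmid \omega)
\;=\;P_1^W\!\bigl(v_F(t,\omega)\bigr)
\;=\;P_1^W\!\Bigl((P_1^W)^{-1}\bigl(F(t,\omega)\bigr)\mathbf{1}_{\{0<F(t,\omega)<1\}}\Bigr)
\;=\;F(t,\omega),
\end{equation*}
using $P_1^W(0)=0$ and the continuity of $F$ (Definition \ref{Fdef_rc1}) to handle the indicator on $\{F(t,\omega)=0\}$ and the assumption $K_F^1(\omega)=\infty$ to ensure $F(t,\omega)<1$ on $\reels^+$. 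This gives Equation (\ref{PgZF_rc1}) and closes the implication.

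The main obstacle will be the justification of the pathwise identity $P_1^{Y^F}(t\cmid\omega)=P_1^W(v_F(t,\omega))$, since the proof of Theorem \ref{theorem_rc1} integrates over the law of $\langle Y\rangle$; I will therefore invoke the disintegration principle for regular conditional probability, noting that once we condition on the realization $y$ of $\langle Y\rangle$ the process $(B_{y_s})_{s\geq 0}$ is a time-changed Wiener process with deterministic time change, whence Proposition \ref{lemmaDDS_rc1} applied conditionally collapses the integral in Equation (\ref{eq_theorem_rc1}) to the single value $P_1^W(y_t)$ evaluated at $y=\langle Y^F\rangle(\omega)=v_F(\cdot,\omega)$. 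The remainder of the argument is then a pathwise replica of the one-sided (i) case already carried out in Proposition \ref{propiii_F_nrc1}.
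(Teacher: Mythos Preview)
Your proposal is correct and follows essentially the same approach as the paper: establish the pathwise identity $P_1^{Y^F}(t\cmid\omega)=P_1^W(v_F(t,\omega))$, then invert via Lemma \ref{lemma_inv_nrc1} for $(i)\Rightarrow(ii)$ and substitute directly for $(ii)\Rightarrow(i)$. If anything, your treatment of the pathwise identity is more careful than the paper's, which simply cites Equation (\ref{eq_theorem_rc1}) (an integrated, unconditional formula) to justify the conditional equality without spelling out the disintegration argument you describe.
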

\begin{proof}[Proof of Proposition \ref{propiii_F_rc1}] Proof of $(i) \implies (ii)$. We assume that $v_F$ is a solution in the sense of Definition \ref{def_solution_F_rc1}. Given that $Y^F$ satisfies Assumption \ref{assDDS_rc1}, we can substitute the left-hand side of Equation (\ref{PgZF_rc1}) with Equation (\ref{eq_theorem_rc1}) to deduce 
\begin{eqnarray}
\label{proofiii_F_eq0_rc1}
P_1^W \Big(\langle Y^F\rangle_{t}(\omega) \Big) = F(t,\omega) \text{ for } t \geq 0 \text{ and } \omega \in \Omega. 
\end{eqnarray}
Using Equation (\ref{v_F_def_rc1}), Equation (\ref{proofiii_F_eq0_rc1}) can be reexpressed as 
\begin{eqnarray}
\label{proofiii_F_eq1_rc1}
P_1^W \Big(v_F(t,\omega) \Big) = F(t,\omega) \text{ for } t \geq 0 \text{ and } \omega \in \Omega. 
\end{eqnarray}
By Lemma \ref{lemma_inv_nrc1}, there exists an invert $(P_1^W)^{-1} : [0,1) \rightarrow \reels^+$.
Applying $(P_1^W)^{-1}$ on both sides of Equation (\ref{proofiii_F_eq1_rc1}), Equation (\ref{proofiii_F_eq1_rc1}) can be rewritten as
Equation (\ref{def_expsol_F_rc1}).

\noindent Proof of $(ii) \implies (i)$. We assume that $v_F$ is an explicit solution in the sense of Definition \ref{def_explicitsolution_F_rc1}. We have
\begin{eqnarray*}
P_1^{Y^F} (t | \omega)  & = & P_1^W \Big(\langle Y^F\rangle_{t}(\omega) \Big) \\
 & = & P_1^W \Big(v_F(t,\omega) \Big)\\
& = & P_1^W \Big((P_1^W)^{-1}(F(t,\omega)) \mathbf{1}_{\{0 < F(t,\omega) < 1\}})\Big)\\
& = & F(t,\omega),
\end{eqnarray*}
where we use Equation (\ref{eq_theorem_rc1}) along with the assumption that $v_F$ satisfies Assumption \ref{assDDS_rc1} in the first equality, Equation (\ref{v_F_def_rc1}) in the second equality, Equation (\ref{def_expsol_F_rc1}) in the third equality, and algebraic manipulation in the fourth equality.
\end{proof}
\noindent The proof of Theorem \ref{theorem_F_rc1} is a direct application of Proposition \ref{prop_ass_F_rc1} and Proposition \ref{propiii_F_rc1}.
\begin{proof}[Proof of Theorem \ref{theorem_F_rc1}] To obtain (a), we apply Proposition \ref{prop_ass_F_rc1} along with Assumption \ref{assmain_F_rc1}. Then, an application of Proposition \ref{propiii_F_rc1} along with (a) yields (b). 
\end{proof}

\noindent The next proposition shows that Assumption \ref{assmain_f_rc1} implies that $Y^{f}$ satisfies Assumption \ref{assDDS_rc1}. The proof is mainly based on the use of Assumption \ref{assmain_f_rc1}. 
\begin{proposition}
\label{prop_ass_f_rc1}
Under Assumption \ref{assmain_f_rc1}, we have that $Y^{f}$ satisfies Assumption \ref{assDDS_rc1}.
\end{proposition}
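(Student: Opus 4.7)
\noindent The plan is to follow the same three-part strategy used in the proof of Proposition \ref{prop_ass_f_nrc1}: realize $Y^f$ concretely as an It\^o integral so that the continuity, local-martingale property, and starting condition $Y^f_0=0$ are built in, and then verify that the resulting random quadratic variation diverges at infinity. The substantial simplification over the nonrandom one-sided (i) case is that the compact-set topological argument previously needed to establish local integrability of $\sigma_f^2$ is no longer required---Assumption \ref{assmain_f_rc1} provides local integrability on all of $\reels^+\times\Omega$ directly.

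\noindent First, I would use the local integrability $\sigma_f^2 \in L_{1,\mathrm{loc}}(\reels^+ \times \Omega)$ from Expression \eqref{assumptionvol1_rc1}, equivalently $\sigma_f \in L_{2,\mathrm{loc}}(\reels^+ \times \Omega)$. Following Example \ref{example_ito_rc1}, I would define $Y^f_t = \int_0^t \sigma_{s,f}\, dW_s$; Theorem I.4.40 (p.~48) in \cite{JacodLimit2003} then certifies that $Y^f$ is a continuous $\mathcal{F}$-adapted local martingale with $Y^f_0 = 0$ and random quadratic variation $\langle Y^f\rangle_t(\omega) = \int_0^t \sigma_{s,f}^2(\omega)\, ds$, which delivers two of the three conditions in Assumption \ref{assDDS_rc1}.

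\noindent Second, I would identify this quadratic variation with $v_F(t,\omega)$ and show divergence at infinity. By the inverse function theorem applied to $P_1^W$ together with $(P_1^W)' = f_1^W$---exactly as in the $(ii)\Rightarrow(i)$ direction of the proof of Proposition \ref{propiii_f_nrc1}---one obtains $\sigma_{s,f}^2(\omega) = \partial_s\bigl[(P_1^W)^{-1}(F(s,\omega))\bigr]$ almost everywhere on $\{0<F(s,\omega)<1\}$. Integrating from $0$ and using $F(0,\omega) = 0$ together with $(P_1^W)^{-1}(0)=0$ from Lemma \ref{lemma_inv_nrc1} yields
\begin{equation*}
\langle Y^f\rangle_t(\omega) \;=\; (P_1^W)^{-1}(F(t,\omega))\, \mathbf{1}_{\{0 < F(t,\omega) < 1\}} \;=\; v_F(t,\omega).
\end{equation*}
Since $K_F^1(\omega)$ is not finite under Assumption \ref{assmain_f_rc1}, we have $F(t,\omega) < 1$ for every $t\geq 0$, while $F(t,\omega) \to 1$ as $t\to\infty$ by Definition \ref{Fdef_rc1}. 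Combining this with Lemma \ref{lemma_inv_nrc1}, which gives $(P_1^W)^{-1}(u) \to \infty$ as $u \to 1^-$, yields $\langle Y^f\rangle_\infty(\omega) = \infty$ for every $\omega \in \Omega$, completing the verification of Assumption \ref{assDDS_rc1}. The main (and essentially only) obstacle is the almost-everywhere identification $\int_0^t \sigma_{s,f}^2(\omega)\,ds = v_F(t,\omega)$, but this is a direct chain-rule calculation with no additional measurability input required since the whole argument proceeds pointwise in $\omega$.
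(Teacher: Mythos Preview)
Your proposal is correct and follows essentially the same approach as the paper: invoke local integrability from Assumption \ref{assmain_f_rc1} together with Theorem I.4.40 in \cite{JacodLimit2003} to obtain the local-martingale structure and quadratic variation, then identify $\langle Y^f\rangle_t(\omega)$ with $v_F(t,\omega)$ and use $(P_1^W)^{-1}(u)\to\infty$ as $u\to 1^-$ plus the non-finiteness of $K_F^1(\omega)$ to conclude $\langle Y^f\rangle_\infty=\infty$. The only cosmetic difference is that the paper writes the identification $\langle Y^f\rangle=\langle Y^F\rangle=v_F$ by appealing directly to Definition \ref{def_explicitsolution_F_rc1} and the explicit $\operatorname{erfinv}$ formula (\ref{expsoldefsimp_rc1}), whereas you redo the chain-rule integration; both are equivalent.
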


\begin{proof}[Proof of Proposition \ref{prop_ass_f_rc1}] 
We can deduce that $Y^f$ is a local martingale with random quadratic variation 
\begin{eqnarray}
\label{proof_rc1}
\langle Y^f\rangle_{t} (\omega)  &=& \int_0^t \sigma_{u,f}^2(\omega) du \text{ for } t \geq 0 \text{ and } \omega \in \Omega
\end{eqnarray} 
by Theorem I.4.40 (p. 48) from \cite{JacodLimit2003} along with Expression (\ref{assumptionvol1_rc1}) from Assumption \ref{assmain_f_rc1}. We show that $\langle Y^f\rangle_{t} \rightarrow \infty$ as $t \rightarrow \infty$. We can calculate that 
\begin{eqnarray}
\nonumber \langle Y^f\rangle_{t}(\omega)  &=& \langle Y^F\rangle_{t}(\omega)\\ \nonumber &=& v_{F}(t,\omega)\\ \label{proof2_rc1}
&=& \frac{1}{2\operatorname{erfinv}(1-F(t,\omega))^2} \mathbf{1}_{\{0 < F(t,\omega) < 1\}} \text{ for } t \geq 0 \text{ and } \omega \in \Omega,
\end{eqnarray}
where we use the fact that $Y^f=Y^F$ in the first equality,  Equation (\ref{v_F_def_rc1}) from Definition \ref{def_explicitsolution_F_rc1} in the second equality, and Equation (\ref{expsoldefsimp_rc1}) in the last equality. By definition we have that $\operatorname{erfinv}(z) \rightarrow 0$ as $z\rightarrow 0$, and by Definition \ref{Fdef_rc1} we have that $\underset{t \rightarrow \infty}{\lim} F(t,\omega) =1$. Thus, we can deduce by the assumption that $K_F^1$ is finite from Assumption \ref{assmain_f_rc1} that 
\begin{eqnarray}
\label{proof3_rc1}
\frac{1}{2\operatorname{erfinv}(1-F(t,\omega))^2} \mathbf{1}_{\{0 < F(t,\omega) < 1\}} \rightarrow 0
\end{eqnarray}
as $t \rightarrow \infty$. We can deduce by Equations (\ref{proof_rc1}), (\ref{proof2_rc1}) and (\ref{proof3_rc1})
that $\langle Y^f\rangle_{t} \rightarrow \infty$ as $t \rightarrow \infty$. This implies that $Y^{f}$ satisfies Assumption \ref{assDDS_rc1}.
\end{proof}
\noindent The next proposition states that if $Y^f$ satisfies Assumption \ref{assDDS_rc1}, then, the variance function is a solution if and only if it is an explicit solution. The proof is based on substituting the left-hand side of Equation (\ref{PgZf_rc1}) with Equations (\ref{eq_theorem_rc1}) from Theorem \ref{theorem_rc1} and (\ref{sigma_f_def_rc1}) and then differentiating and inverting on both sides of the equation to derive the explicit solution.
\begin{proposition}
\label{propiii_f_rc1}
If we assume that $Y^f$ satisfies Assumption \ref{assDDS_rc1}, then, (i) $\sigma_f^2$ is a solution in the sense of Definition \ref{def_solution_f_rc1} $\iff$ (ii) $\sigma_f^2$ is an explicit solution in the sense of Definition \ref{def_explicitsolution_f_rc1}.
\end{proposition}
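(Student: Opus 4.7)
The plan is to adapt the proof of Proposition \ref{propiii_f_nrc1} by carrying the parameter $\omega \in \Omega$ throughout and using the pointwise-in-$\omega$ version of Theorem \ref{theorem_rc1}. The key observation is that, conditionally on $\omega$, the quadratic variation $\langle Y^f\rangle_\cdot(\omega)$ is a deterministic element of $\mathcal{I}(\reels^+,\reels^+)$, so Proposition \ref{lemmaDDS_rc1} together with a regular-conditional-probability argument collapses the integral in Equation (\ref{eq_theorem_rc1}) to the pointwise identity $P_1^{Y^f}(t|\omega) = P_1^W(\langle Y^f\rangle_t(\omega))$ for $t \geq 0$ and $\omega \in \Omega$. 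This is what makes the one-sided (i) proof template reusable.

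For the direction $(i) \Rightarrow (ii)$, I would first combine Equation (\ref{PgZf_rc1}) with the above identity to deduce $P_1^W(\langle Y^f\rangle_t(\omega)) = F(t,\omega)$. Inserting Equation (\ref{sigma_f_def_rc1}) and applying the inverse $(P_1^W)^{-1}$ from Lemma \ref{lemma_inv_nrc1} on both sides yields
\[
\int_0^t \sigma_{s,f}^2(\omega)\, ds \;=\; (P_1^W)^{-1}(F(t,\omega))\, \mathbf{1}_{\{0 < F(t,\omega) < 1\}}.
\]
Then I would differentiate in $t$ almost everywhere, which is legitimate because $F(\cdot,\omega)$ is absolutely continuous by Definition \ref{fdef_rc1} and $(P_1^W)^{-1} \in \mathcal{C}_1([0,1),\reels^+)$ by Lemma \ref{lemma_PgWinvertC1_nrc1}, and apply the chain rule followed by the inverse function theorem together with the identity $(P_1^W)'(t) = f_1^W(t)$ to arrive at the explicit form (\ref{def_expsol_f_rc1}).

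For the direction $(ii) \Rightarrow (i)$, I would run this chain in reverse: start from the expression (\ref{def_expsol_f_rc1}), substitute into $P_1^{Y^f}(t|\omega) = P_1^W\bigl(\int_0^t \sigma_{s,f}^2(\omega)\,ds\bigr)$, recognize the integrand as $f(s,\omega)\,((P_1^W)^{-1})'(F(s,\omega))\,\mathbf{1}_{\{0 < F(s,\omega) < 1\}}$ via the inverse function theorem, integrate, and apply $P_1^W \circ (P_1^W)^{-1} = \id$ on $[0,1)$ to recover $F(t,\omega)$.

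The main obstacle I anticipate is not algebraic (the manipulations are essentially identical to the nonrandom case) but bookkeeping: each almost-everywhere statement in $t$ now carries an exceptional null set that may depend on $\omega$, and the differentiation of an absolutely continuous integral-in-$t$ of a random integrand must be handled $\omega$ by $\omega$. Since Definition \ref{def_explicitsolution_f_rc1} only requires (\ref{def_expsol_f_rc1}) to hold almost everywhere in $t$ for each fixed $\omega \in \Omega$, no uniform-in-$\omega$ regularity of the exceptional sets is required, and the argument therefore goes through pointwise in $\omega$ exactly as in Proposition \ref{propiii_f_nrc1}.
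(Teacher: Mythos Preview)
Your proposal is correct and follows essentially the same route as the paper: reduce to the pointwise identity $P_1^{Y^f}(t\mid\omega)=P_1^W(\langle Y^f\rangle_t(\omega))$, then mimic the proof of Proposition \ref{propiii_f_nrc1} by applying $(P_1^W)^{-1}$ from Lemma \ref{lemma_inv_nrc1}, differentiating via Lemma \ref{lemma_PgWinvertC1_nrc1} and the chain rule, and invoking the inverse function theorem. Your remark that the conditional probability collapses the integral in Equation (\ref{eq_theorem_rc1}) to a pointwise formula is in fact more explicit than the paper's own justification of that step.
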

\begin{proof}[Proof of Proposition \ref{propiii_f_rc1}] Proof of $(i) \implies (ii)$. We assume that $\sigma_f^2$ is a solution in the sense of Definition \ref{def_solution_f_rc1}. Given that $Y^f$ satisfies Assumption \ref{assDDS_rc1}, we can substitute the left-hand side of Equation (\ref{PgZf_rc1}) with Equation (\ref{eq_theorem_rc1}) to deduce 
\begin{eqnarray}
\label{proofiii_f_eq0_rc1}
P_1^W \Big(\langle Z^f\rangle_{t}(\omega) \Big) = F(t,\omega) \text{ for } t \geq 0 \text{ and } \omega \in \Omega. 
\end{eqnarray}
Using Equation (\ref{sigma_f_def_rc1}), Equation (\ref{proofiii_f_eq0_rc1}) can be reexpressed as 
\begin{eqnarray}
\label{proofiii_f_eq1_rc1}
P_1^W \Big(\int_0^t \sigma_{s,f}^2(\omega) ds \Big) = F(t,\omega) \text{ for } t \geq 0 \text{ and } \omega \in \Omega. 
\end{eqnarray}
By Lemma \ref{lemma_inv_nrc1}, there exists an invert $(P_1^W)^{-1} : [0,1) \rightarrow \reels^+$.
Applying $(P_1^W)^{-1}$ on both sides of Equation (\ref{proofiii_f_eq1_rc1}), Equation (\ref{proofiii_f_eq1_rc1}) can be rewritten as
\begin{eqnarray}
\label{proofiii_f_eq2_rc1}
\int_0^t \sigma_{s,f}^2(\omega) ds  = & (P_1^W)^{-1}(F(t,\omega)) \mathbf{1}_{\{0 < F(t,\omega) < 1\}} & \text{ for } t \geq 0 \text{ and } \omega \in \Omega.
\end{eqnarray}
The left-hand side of Equation (\ref{proofiii_f_eq2_rc1}) and $F$ have a derivative almost everywhere for any $t \geq 0$ by absolute continuity properties and since $F$ is absolutely continuous. $(P_1^W)^{-1}$ is differentiable on $[0,1)$ by Lemma \ref{lemma_PgWinvertC1_nrc1}. Thus, we can differentiate Equation (\ref{proofiii_f_eq2_rc1}) almost everywhere on both sides, by using the chain rule on the right-hand side. We obtain
\begin{eqnarray}
\label{sigmatff_rc1}
\sigma_{t,f}^2(\omega) =  f(t,\omega)((P_1^W)^{-1}(\omega))'(F(t,\omega)) \mathbf{1}_{\{0 < F(t,\omega) < 1\}} \\\nonumber \text{almost everywhere for } t \geq 0 \text{ and } \omega \in \Omega.
\end{eqnarray}
Applying the inverse function theorem, Equation (\ref{sigmatff_rc1}) can be reexpressed as
\begin{eqnarray*}
\sigma_{t,f}^2(\omega) = &\frac{f(t,\omega)}{(P_1^W)'((P_1^W)^{-1}(F(t,\omega)))} \mathbf{1}_{\{0 < F(t,\omega) < 1\}} & \text{ almost everywhere for } t \geq 0 \text{ and } \omega \in \Omega,
\end{eqnarray*}
or equivalently of the form (\ref{def_expsol_f_rc1}) as $(P_1^W)'(t)=f_1(t)$ almost everywhere for any $t \geq 0$. Thus, we have shown that $\sigma_f^2$ is an explicit solution in the sense of Definition \ref{def_explicitsolution_f_rc1}.

\noindent Proof of $(ii) \implies (i)$. We assume that $\sigma_f^2$ is an explicit solution in the sense of Definition \ref{def_explicitsolution_f_rc1}. We have almost everywhere for $t \geq 0$ and $\omega \in \Omega$ that
\begin{eqnarray*}
P_1^{Y^f} (t | \omega)  & = & P_1^W (\int_0^t \sigma_{s,f}^2(\omega) ds)\\
 & = & P_1^W (\int_0^t \frac{f(s,\omega)}{f_1^W((P_1^W)^{-1}(F(s,\omega)))} \mathbf{1}_{\{0 < F(s,\omega) < 1\}} ds)\\ 
& = & P_1^W (\int_0^t f(s,\omega)((P_1^W)^{-1})'(F(s,\omega)) \mathbf{1}_{\{0 < F(s,\omega) < 1\}} ds)\\
& = & P_1^W ((P_1^W)^{-1})(F(t,\omega))\\
& = & F(t,\omega).
\end{eqnarray*}
where we use Equation (\ref{eq_theorem_rc1}) along with the assumption that $Y^f$ satisfies Assumption \ref{assDDS_rc1} in the first equality, Equation (\ref{def_expsol_f_rc1}) in the second equality, the inverse function theorem in the third equality, integration in the fourth equality and algebraic manipulation in the fifth equality.
We have thus shown that $\sigma_t^2$ satisfies Equation (\ref{tildesigma_f_def_rc1}), and thus that $\sigma_f^2$ is a solution in the sense of Definition \ref{def_solution_f_rc1}.
\end{proof}
\noindent The proof of Theorem \ref{theorem_f_rc1} is a direct application of Proposition \ref{prop_ass_f_rc1} and Proposition \ref{propiii_f_rc1}.
\begin{proof}[Proof of Theorem \ref{theorem_f_rc1}] To obtain (a), we apply Proposition \ref{prop_ass_f_rc1} along with Assumption \ref{assmain_f_rc1}. Then, an application of Proposition \ref{propiii_f_rc1} along with (a) yields (b). 
\end{proof}

\subsection{Two-sided (ii) case}
The next proposition shows that Assumption \ref{assmain_F_rc1} implies that $Z^{F}$ satisfies Assumption \ref{assDDS_rc1}.
\begin{proposition}
\label{prop_ass_F_rc2}
Under Assumption \ref{assmain_F_rc1}, we have that $Z^{F}$ satisfies Assumption \ref{assDDS_rc1}.
\end{proposition}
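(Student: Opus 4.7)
The plan is to mimic the argument used in Proposition \ref{prop_ass_F_nrc2} (the two-sided (i) analogue) essentially verbatim, with the only substantive change being that every quantity now carries an $\omega$-dependence which we treat pointwise. Concretely, by Definition \ref{def_explicitsolution_F_rc2}, $Z^F$ is already constructed as a continuous local martingale with $Z_0^F=0$ and quadratic variation
\begin{eqnarray*}
\langle Z^F\rangle_t(\omega) \;=\; v_F(t,\omega) \;=\; (P_{g,h}^W)^{-1}(F(t,\omega))\,\mathbf{1}_{\{0<F(t,\omega)<1\}}, \qquad t\geq 0,\ \omega\in\Omega,
\end{eqnarray*}
so the only property left to verify in Assumption \ref{assDDS_rc1} is $\langle Z^F\rangle_\infty(\omega)=\infty$ for each $\omega$.

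Fix $\omega\in\Omega$. First I would invoke Assumption \ref{assmain_F_rc1}, which asserts that $K_F^1(\omega)$ is not finite; by Definition \ref{Fdef_rc1} $F(\cdot,\omega)$ is nondecreasing, continuous, satisfies $F(0,\omega)=0$ and $\lim_{t\to\infty}F(t,\omega)=1$, so together with $K_F^1(\omega)=\infty$ this forces $0<F(t,\omega)<1$ for all sufficiently large $t$, and $F(t,\omega)\uparrow 1$ as $t\to\infty$. Next I would appeal to Lemma \ref{lemma_inv_nrc2}, which gives that $(P_{g,h}^W)^{-1}:[0,1)\to\reels^+$ is strictly increasing with $(P_{g,h}^W)^{-1}(1)=\infty$, in the sense that $(P_{g,h}^W)^{-1}(s)\to\infty$ as $s\uparrow 1$.

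Composing these two observations pointwise in $\omega$, the indicator in $v_F(t,\omega)$ equals $1$ for all sufficiently large $t$, and
\begin{eqnarray*}
v_F(t,\omega) \;=\; (P_{g,h}^W)^{-1}(F(t,\omega)) \;\longrightarrow\; \infty \qquad \text{as } t\to\infty.
\end{eqnarray*}
Hence $\langle Z^F\rangle_\infty(\omega)=\infty$ for every $\omega\in\Omega$, which together with the local martingale property and $Z^F_0=0$ built into Definition \ref{def_explicitsolution_F_rc2} yields Assumption \ref{assDDS_rc1}.

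I do not anticipate any genuine obstacle in this proof; the only point meriting a line of care is the $\omega$-by-$\omega$ application of Lemma \ref{lemma_inv_nrc2}, which is harmless because that lemma is deterministic and the map $\omega\mapsto F(t,\omega)$ is a regular cdf for each $\omega$. This is exactly parallel to how Proposition \ref{prop_ass_F_rc1} extended Proposition \ref{prop_ass_F_nrc1} from the nonrandom to the random setting, so I would model the write-up directly on those two precedents.
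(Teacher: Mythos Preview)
Your proposal is correct and follows essentially the same approach as the paper's proof: both invoke Definition \ref{def_explicitsolution_F_rc2} to identify $\langle Z^F\rangle_t(\omega)$ with $(P_{g,h}^W)^{-1}(F(t,\omega))\mathbf{1}_{\{0<F(t,\omega)<1\}}$, then combine Lemma \ref{lemma_inv_nrc2} (giving $(P_{g,h}^W)^{-1}(1)=\infty$), Definition \ref{Fdef_rc1} (giving $F(t,\omega)\to 1$), and Assumption \ref{assmain_F_rc1} to conclude $\langle Z^F\rangle_\infty(\omega)=\infty$ pointwise in $\omega$. Your write-up is slightly more explicit about the role of the indicator and the pointwise-in-$\omega$ reasoning, but the argument is the same.
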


\begin{proof}[Proof of Proposition \ref{prop_ass_F_rc2}] By Definition \ref{def_explicitsolution_F_rc2}, $Z^F$ is defined as a continuous local martingale with quadratic variation $\langle Z^F\rangle_{t} (\omega) = v_{F}(t,\omega) \text{ for } t \geq 0$ and $\omega \in \Omega$, which can be expressed as 
\begin{eqnarray*}
v_{F}(t,\omega) = & (P_{g,h}^W)^{-1}(F(t,\omega)) \mathbf{1}_{\{0 < F(t,\omega) < 1\}}& \text{ for } t \geq 0.
\end{eqnarray*}
By Lemma \ref{lemma_inv_nrc2} we have that $(P_{g,h}^{W})^{-1}(1)=\infty$, and by Definition \ref{Fdef_rc1} we have that $\underset{t \rightarrow \infty}{\lim} F(t,\omega) =1$. Thus, we can deduce by Assumption \ref{assmain_F_rc1} that $\underset{t \rightarrow \infty}{\lim} v_F(t,\omega) =\infty$. This implies that $\langle Z^F \rangle_{\infty} = \infty$ and thus that $Z^{F}$ satisfies Assumption \ref{assDDS_rc1}.
\end{proof}
\noindent The next proposition states that if a nondecreasing function satisfies Assumption \ref{assDDS_rc1}, then it is a solution if and only if it is an explicit solution. The proof is based on substituting the left-hand side of Equation (\ref{PgZF_rc2}) with Equations (\ref{eq_theorem_rc2}) and (\ref{v_F_def_rc2}) and then inverting on both sides of the equation to derive the explicit solution.
\begin{proposition}
\label{propiii_F_rc2}
If we assume that $v_F$ satisfies Assumption \ref{assDDS_rc1}, then, (i) $v_F$ is a solution in the sense of Definition \ref{def_solution_F_rc2} $\iff$ (ii) $v_F$ is an explicit solution in the sense of Definition \ref{def_explicitsolution_F_rc2}.
\end{proposition}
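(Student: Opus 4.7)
The plan is to mirror the argument used in Proposition \ref{propiii_F_rc1}, adapting it from the one-sided to the two-sided setting. The required machinery is already in place: Theorem \ref{theorem_rc2} provides the relationship $P_{g,h}^{Z}(t \mid \omega) = P_{g,h}^W(\langle Z\rangle_t(\omega))$ once we condition on $\omega$ (which fixes the random boundary $(g(\omega),h(\omega))$ and the quadratic variation path), and Lemma \ref{lemma_inv_nrc2} supplies a strictly increasing inverse $(P_{g,h}^W)^{-1}:[0,1)\to\reels^+$. So the argument reduces to substitution followed by inversion in both directions.

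For the forward implication $(i)\Rightarrow(ii)$, I would start by assuming $v_F$ is a solution in the sense of Definition \ref{def_solution_F_rc2}, so Equation (\ref{PgZF_rc2}) holds. Because $v_F$ satisfies Assumption \ref{assDDS_rc1}, Theorem \ref{theorem_rc2} (in its conditional form) lets me rewrite the left-hand side of (\ref{PgZF_rc2}) as $P_{g,h}^W(\langle Z^F\rangle_t(\omega))$, and then (\ref{v_F_def_rc2}) gives
\begin{eqnarray*}
P_{g,h}^W\bigl(v_F(t,\omega)\bigr) \;=\; F(t,\omega),\qquad t\geq 0,\ \omega\in\Omega.
\end{eqnarray*}
Applying $(P_{g,h}^W)^{-1}$ from Lemma \ref{lemma_inv_nrc2} to both sides (with the indicator $\mathbf{1}_{\{0<F(t,\omega)<1\}}$ handling the endpoints $0$ and $1$ exactly as in the one-sided case) yields the explicit form (\ref{def_expsol_F_rc2}).

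For the reverse implication $(ii)\Rightarrow(i)$, I would start from Equation (\ref{def_expsol_F_rc2}) and compute, for each $t\geq 0$ and $\omega\in\Omega$,
\begin{eqnarray*}
P_{g,h}^{Z^F}(t\mid\omega)
\;=\; P_{g,h}^W\bigl(\langle Z^F\rangle_t(\omega)\bigr)
\;=\; P_{g,h}^W\bigl(v_F(t,\omega)\bigr)
\;=\; P_{g,h}^W\bigl((P_{g,h}^W)^{-1}(F(t,\omega))\,\mathbf{1}_{\{0<F(t,\omega)<1\}}\bigr)
\;=\; F(t,\omega),
\end{eqnarray*}
invoking Theorem \ref{theorem_rc2} together with the hypothesis that $v_F$ satisfies Assumption \ref{assDDS_rc1} in the first equality, Equation (\ref{v_F_def_rc2}) in the second, (\ref{def_expsol_F_rc2}) in the third, and algebraic simplification (using Lemma \ref{lemma_inv_nrc2} for the inversion and the boundary values $(P_{g,h}^W)^{-1}(0)=0$, $(P_{g,h}^W)^{-1}(1)=\infty$) in the fourth.

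The proof is entirely routine once Theorem \ref{theorem_rc2} and Lemma \ref{lemma_inv_nrc2} are in hand; there is no substantive obstacle. The only delicate point worth stating carefully is the passage from the integrated identity (\ref{eq_theorem_rc2}) to its conditional form $P_{g,h}^{Z}(t\mid\omega) = P_{g,h}^W(\langle Z\rangle_t(\omega))$, which follows because conditioning on $\omega$ freezes the triple $(g(\omega),h(\omega),\langle Z\rangle(\omega))$ so the integral in (\ref{eq_theorem_rc2}) collapses to a single evaluation. Handling the degenerate events $\{F(t,\omega)=0\}$ and $\{F(t,\omega)=1\}$ via the indicator is handled exactly as in the proof of Proposition \ref{propiii_F_rc1}, so no new ideas are required.
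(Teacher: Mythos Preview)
Your proposal is correct and follows essentially the same approach as the paper's own proof: both directions proceed by the substitution $P_{g,h}^{Z^F}(t\mid\omega)=P_{g,h}^W(v_F(t,\omega))$ via Theorem \ref{theorem_rc2} and Equation (\ref{v_F_def_rc2}), followed by inversion using Lemma \ref{lemma_inv_nrc2}. Your additional remark about passing from the integrated identity (\ref{eq_theorem_rc2}) to its conditional form is in fact more careful than the paper, which makes this substitution without comment.
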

\begin{proof}[Proof of Proposition \ref{propiii_F_rc2}] Proof of $(i) \implies (ii)$. We assume that $v_F$ is a solution in the sense of Definition \ref{def_solution_F_rc2}. Given that $Z^F$ satisfies Assumption \ref{assDDS_rc1}, we can substitute the left-hand side of Equation (\ref{PgZF_rc2}) with Equation (\ref{eq_theorem_rc2}) to deduce 
\begin{eqnarray}
\label{proofiii_F_eq0_rc2}
P_{g,h}^W \Big(\langle Z^F\rangle_{t}(\omega) \Big) = F(t,\omega) \text{ for } t \geq 0 \text{ and } \omega \in \Omega. 
\end{eqnarray}
Using Equation (\ref{v_F_def_rc2}), Equation (\ref{proofiii_F_eq0_rc2}) can be reexpressed as 
\begin{eqnarray}
\label{proofiii_F_eq1_rc2}
P_{g,h}^W \Big(v_F(t,\omega) \Big) = F(t,\omega) \text{ for } t \geq 0 \text{ and } \omega \in \Omega. 
\end{eqnarray}
By Lemma \ref{lemma_inv_nrc1}, there exists an invert $(P_{g,h}^W)^{-1} : [0,1) \rightarrow \reels^+$.
Applying $(P_{g,h}^W)^{-1}$ on both sides of Equation (\ref{proofiii_F_eq1_rc2}), Equation (\ref{proofiii_F_eq1_rc2}) can be rewritten as
Equation (\ref{def_expsol_F_rc2}).

\noindent Proof of $(ii) \implies (i)$. We assume that $v_F$ is an explicit solution in the sense of Definition \ref{def_explicitsolution_F_rc2}. We have
\begin{eqnarray*}
P_{g,h}^{Z^F} (t | \omega)  & = & P_{g,h}^W \Big(\langle Z^F\rangle_{t}(\omega) \Big) \\
 & = & P_{g,h}^W \Big(v_F(t,\omega) \Big)\\
& = & P_{g,h}^W \Big((P_{g,h}^W)^{-1}(F(t,\omega)) \mathbf{1}_{\{0 < F(t,\omega) < 1\}})\Big)\\
& = & F(t,\omega),
\end{eqnarray*}
where we use Equation (\ref{eq_theorem_rc2}) along with the assumption that $v_F$ satisfies Assumption \ref{assDDS_rc1} in the first equality, Equation (\ref{v_F_def_rc2}) in the second equality, Equation (\ref{def_expsol_F_rc2}) in the third equality, and algebraic manipulation in the fourth equality.
\end{proof}
\noindent The proof of Theorem \ref{theorem_F_rc2} is a direct application of Proposition \ref{prop_ass_F_rc2} and Proposition \ref{propiii_F_rc2}.
\begin{proof}[Proof of Theorem \ref{theorem_F_rc2}] To obtain (a), we apply Proposition \ref{prop_ass_F_rc2} along with Assumption \ref{assmain_F_rc1}. Then, an application of Proposition \ref{propiii_F_rc2} along with (a) yields (b). 
\end{proof}

\noindent The next proposition shows that Assumption \ref{assmain_f_rc1} implies that $Z^{f}$ satisfies Assumption \ref{assDDS_rc1}. The proof is mainly based on the use of Assumption \ref{assmain_f_rc1}. 
\begin{proposition}
\label{prop_ass_f_rc2}
Under Assumption \ref{assmain_f_rc1}, we have that $Z^{f}$ satisfies Assumption \ref{assDDS_rc1}.
\end{proposition}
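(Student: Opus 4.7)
The plan is to verify that $Z^{f}$ meets the three requirements of Assumption \ref{assDDS_rc1}: it must be a continuous $\mathcal{F}$-local martingale, start at zero, and have quadratic variation diverging to infinity. Continuity, adaptedness, and the initial condition $Z^f_0=0$ are built into Definition \ref{def_solution_f_rc2}; the nontrivial content is the identification of $\langle Z^f\rangle$ and its behavior at infinity.

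First I would establish that $Z^f$ is indeed a continuous local martingale with random quadratic variation $\langle Z^f\rangle_t(\omega) = \int_0^t \sigma^2_{u,f}(\omega)\,du$. This follows by invoking Theorem I.4.40 (p.~48) in \cite{JacodLimit2003}, whose hypothesis is precisely the local integrability condition $\sigma_{f}^2 \in L_{1,\mathrm{loc}}(\reels^+ \times \Omega)$ supplied by Expression (\ref{assumptionvol1_rc1}) in Assumption \ref{assmain_f_rc1}. This mirrors the corresponding step in the proof of Proposition \ref{prop_ass_f_rc1}.

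Next I would show that $\langle Z^f\rangle_t(\omega) \to \infty$ as $t \to \infty$ for each $\omega \in \Omega$. The key identification is that $Z^f = Z^F$, so by Equation (\ref{v_F_def_rc2}) in Definition \ref{def_explicitsolution_F_rc2},
\begin{eqnarray*}
\langle Z^f\rangle_t(\omega) \;=\; \langle Z^F\rangle_t(\omega) \;=\; v_F(t,\omega) \;=\; (P_{g,h}^W)^{-1}(F(t,\omega))\, \mathbf{1}_{\{0<F(t,\omega)<1\}}.
\end{eqnarray*}
By Lemma \ref{lemma_inv_nrc2} we have $(P_{g,h}^W)^{-1}(s) \to \infty$ as $s \uparrow 1$, and Definition \ref{Fdef_rc1} gives $F(t,\omega) \to 1$ as $t \to \infty$. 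The assumption in Assumption \ref{assmain_f_rc1} that $K_F^1$ is not finite guarantees that $F(t,\omega) < 1$ for all finite $t$, so the indicator remains equal to $1$ eventually, and the right-hand side genuinely diverges rather than being cut off to $0$.

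The main obstacle here is ensuring that the indicator $\mathbf{1}_{\{0<F(t,\omega)<1\}}$ does not artificially suppress the divergence. This is exactly where the hypothesis on $K_F^1$ earns its keep: without it, $F$ could reach $1$ at some finite time $t^{*}$, making $v_F$ jump back to $0$ and invalidating the conclusion $\langle Z^f\rangle_\infty = \infty$. Once this point is handled, combining the three preceding ingredients yields $\langle Z^f\rangle_\infty(\omega) = \infty$ for every $\omega \in \Omega$, which completes the verification that $Z^f$ satisfies Assumption \ref{assDDS_rc1}.
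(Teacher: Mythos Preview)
Your proposal is correct and follows essentially the same approach as the paper: invoke Theorem I.4.40 in \cite{JacodLimit2003} via the local integrability hypothesis (\ref{assumptionvol1_rc1}) to identify $\langle Z^f\rangle$, then use $Z^f=Z^F$ together with Lemma \ref{lemma_inv_nrc2} and the non-finiteness of $K_F^1$ to conclude $\langle Z^f\rangle_\infty=\infty$. Your explanation of why the indicator does not suppress the divergence is in fact clearer than the paper's version.
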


\begin{proof}[Proof of Proposition \ref{prop_ass_f_rc2}] 
We can deduce that $Z^f$ is a local martingale with random quadratic variation 
\begin{eqnarray}
\label{proof_rc2}
\langle Z^f\rangle_{t} (\omega)  &=& \int_0^t \sigma_{u,f}^2(\omega) du \text{ for } t \geq 0 \text{ and } \omega \in \Omega
\end{eqnarray} 
by Theorem I.4.40 (p. 48) from \cite{JacodLimit2003} along with Expression (\ref{assumptionvol1_rc1}) from Assumption \ref{assmain_f_rc1}. We show that $\langle Z^f\rangle_{t} \rightarrow \infty$ as $t \rightarrow \infty$. We can calculate that 
\begin{eqnarray}
\nonumber \langle Z^f\rangle_{t}(\omega)  &=& \langle Z^F\rangle_{t}(\omega)\\ \nonumber &=& v_{F}(t,\omega)\\ \label{proof2_rc2}
&=& (P_{g,h}^W)^{-1}(F(t,\omega)) \mathbf{1}_{\{0 < F(t,\omega) < 1\}} \text{ for } t \geq 0 \text{ and } \omega \in \Omega,
\end{eqnarray}
where we use the fact that $Z^f=Z^F$ in the first equality,  Equation (\ref{v_F_def_rc1}) from Definition \ref{def_explicitsolution_F_rc1} in the second equality, and Equation (\ref{def_expsol_F_rc2}) in the last equality. By Lemma \ref{lemma_inv_nrc2} we have that $(P_{g,h}^{W})^{-1}(1)=\infty$, and by Definition \ref{Fdef_rc1} we have that $\underset{t \rightarrow \infty}{\lim} F(t,\omega) =1$. Thus, we can deduce by the assumption that $K_F^1$ is finite from Assumption \ref{assmain_f_rc1} that 
\begin{eqnarray}
\label{proof3_rc2}
(P_{g,h}^W)^{-1}(F(t,\omega))\mathbf{1}_{\{0 < F(t,\omega) < 1\}} \rightarrow 0
\end{eqnarray}
as $t \rightarrow \infty$. We can deduce by Equations (\ref{proof_rc2}), (\ref{proof2_rc2}) and (\ref{proof3_rc2})
that $\langle Z^f\rangle_{t} \rightarrow \infty$ as $t \rightarrow \infty$. This implies that $Z^{f}$ satisfies Assumption \ref{assDDS_rc1}.
\end{proof}
\noindent The next proposition states that if $Z^f$ satisfies Assumption \ref{assDDS_rc1}, then, the variance function is a solution if and only if it is an explicit solution. The proof is based on substituting the left-hand side of Equation (\ref{PgZf_rc2}) with Equations (\ref{eq_theorem_rc2}) from Theorem \ref{theorem_rc2} and (\ref{sigma_f_def_rc2}) and then differentiating and inverting on both sides of the equation to derive the explicit solution.
\begin{proposition}
\label{propiii_f_rc2}
If we assume that $Z^f$ satisfies Assumption \ref{assDDS_rc1}, then, (i) $\sigma_f^2$ is a solution in the sense of Definition \ref{def_solution_f_rc2} $\iff$ (ii) $\sigma_f^2$ is an explicit solution in the sense of Definition \ref{def_explicitsolution_f_rc2}.
\end{proposition}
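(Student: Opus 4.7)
The plan is to mirror the argument used in Proposition \ref{propiii_f_rc1} (one-sided (ii) case) while borrowing the invertibility and regularity facts established for the two-sided kernel $P_{g,h}^W$ in Lemma \ref{lemma_inv_nrc2} and Lemma \ref{lemma_PgWinvertC1_nrc2}. Both directions of the equivalence will be handled by the same substitute-and-invert pattern, but applied pathwise in $\omega$ because both the boundary $(g,h)$ and the quadratic variation are random.

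For the implication $(i) \Rightarrow (ii)$, I will first combine the hypothesis that $Z^f$ satisfies Assumption \ref{assDDS_rc1} with Theorem \ref{theorem_rc2}, using regular conditional probability in the spirit of the derivation of Equation (\ref{eq_theorem_rc2}), to deduce that for each fixed $\omega \in \Omega$
\[
P_{g(\omega),h(\omega)}^W\bigl(\langle Z^f\rangle_t(\omega)\bigr) = F(t,\omega) \text{ for } t \geq 0.
\]
Substituting Equation (\ref{sigma_f_def_rc2}) for $\langle Z^f\rangle_t(\omega)$ and then applying the pathwise inverse $(P_{g(\omega),h(\omega)}^W)^{-1}$, which exists and is strictly increasing by Lemma \ref{lemma_inv_nrc2}, yields
\[
\int_0^t \sigma_{s,f}^2(\omega)\, ds = (P_{g(\omega),h(\omega)}^W)^{-1}(F(t,\omega)) \mathbf{1}_{\{0 < F(t,\omega) < 1\}}.
\]
Since $F(\cdot,\omega)$ is absolutely continuous (as a survival pdf is assumed through Definition \ref{fdef_rc1}) and $(P_{g(\omega),h(\omega)}^W)^{-1}$ is $\mathcal{C}_1$ on $[0,1)$ by Lemma \ref{lemma_PgWinvertC1_nrc2}, both sides are differentiable almost everywhere in $t$, and I will differentiate pathwise. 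Applying the chain rule on the right-hand side followed by the inverse function theorem (exactly as in Equation (\ref{sigmatff_nrc2})), and using the identity $(P_{g,h}^W)'(t) = f_{g,h}^W(t)$ almost everywhere, gives Equation (\ref{def_expsol_f_rc2}).

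For the converse $(ii) \Rightarrow (i)$, I will proceed by direct substitution: starting from Equation (\ref{def_expsol_f_rc2}), rewrite the integrand using the inverse function theorem as $f(s,\omega)\bigl((P_{g(\omega),h(\omega)}^W)^{-1}\bigr)'(F(s,\omega))\mathbf{1}_{\{0<F(s,\omega)<1\}}$, integrate in $s$ to recover $(P_{g(\omega),h(\omega)}^W)^{-1}(F(t,\omega))\mathbf{1}_{\{0<F(t,\omega)<1\}}$, apply $P_{g(\omega),h(\omega)}^W$, and conclude by Theorem \ref{theorem_rc2} (as given through the conditional form of Equation (\ref{eq_theorem_rc2})) that $P_{g,h}^{Z^f}(t|\omega) = F(t,\omega)$, which is exactly Equation (\ref{PgZf_rc2}).

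The step I expect to be the main obstacle is the passage from the unconditional integral representation in Theorem \ref{theorem_rc2} to the pathwise identity $P_{g,h}^{Z^f}(t|\omega) = P_{g(\omega),h(\omega)}^W(\langle Z^f\rangle_t(\omega))$: this requires carefully invoking the independence of $Z$ from $(g,h)$ together with the definition of regular conditional probability, exactly as in the intermediate steps of the proof of Theorem \ref{theorem_rc2}. Everything afterwards is a pathwise replay of the nonrandom two-sided argument already used in Proposition \ref{propiii_f_nrc2}.
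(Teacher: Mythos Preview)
Your proposal is correct and follows essentially the same substitute--invert--differentiate pattern as the paper's own proof, with the same reliance on Lemma \ref{lemma_inv_nrc2} and Lemma \ref{lemma_PgWinvertC1_nrc2} and the same direct-substitution argument for the converse. Your explicit pathwise notation $P_{g(\omega),h(\omega)}^W$ and your flagging of the passage from the integral form of Theorem \ref{theorem_rc2} to the conditional identity are in fact clarifications of steps the paper leaves implicit.
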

\begin{proof}[Proof of Proposition \ref{propiii_f_rc2}] Proof of $(i) \implies (ii)$. We assume that $\sigma_f^2$ is a solution in the sense of Definition \ref{def_solution_f_rc2}. Given that $Z^f$ satisfies Assumption \ref{assDDS_rc1}, we can substitute the left-hand side of Equation (\ref{PgZf_rc2}) with Equation (\ref{eq_theorem_rc2}) to deduce 
\begin{eqnarray}
\label{proofiii_f_eq0_rc2}
P_{g,h}^W \Big(\langle Z^f\rangle_{t}(\omega) \Big) = F(t,\omega) \text{ for } t \geq 0 \text{ and } \omega \in \Omega. 
\end{eqnarray}
Using Equation (\ref{sigma_f_def_rc2}), Equation (\ref{proofiii_f_eq0_rc2}) can be reexpressed as 
\begin{eqnarray}
\label{proofiii_f_eq1_rc2}
P_{g,h}^W \Big(\int_0^t \sigma_{s,f}^2(\omega) ds \Big) = F(t,\omega) \text{ for } t \geq 0 \text{ and } \omega \in \Omega. 
\end{eqnarray}
By Lemma \ref{lemma_inv_nrc1}, there exists an invert $(P_{g,h}^W)^{-1} : [0,1) \rightarrow \reels^+$.
Applying $(P_{g,h}^W)^{-1}$ on both sides of Equation (\ref{proofiii_f_eq1_rc2}), Equation (\ref{proofiii_f_eq1_rc2}) can be rewritten as
\begin{eqnarray}
\label{proofiii_f_eq2_rc2}
\int_0^t \sigma_{s,f}^2(\omega) ds  = & (P_{g,h}^W)^{-1}(F(t,\omega)) \mathbf{1}_{\{0 < F(t,\omega) < 1\}} & \text{ for } t \geq 0 \text{ and } \omega \in \Omega.
\end{eqnarray}
The left-hand side of Equation (\ref{proofiii_f_eq2_rc2}) and $F$ have a derivative almost everywhere for any $t \geq 0$ by absolute continuity properties and since $F$ is absolutely continuous. $(P_{g,h}^W)^{-1}$ is differentiable on $[0,1)$ by Lemma \ref{lemma_PgWinvertC1_nrc1}. Thus, we can differentiate Equation (\ref{proofiii_f_eq2_rc2}) almost everywhere on both sides, by using the chain rule on the right-hand side. We obtain
\begin{eqnarray}
\label{sigmatff_rc2}
\sigma_{t,f}^2(\omega) =  f(t,\omega)((P_{g,h}^W)^{-1}(\omega))'(F(t,\omega)) \mathbf{1}_{\{0 < F(t,\omega) < 1\}} \\\nonumber \text{almost everywhere for } t \geq 0 \text{ and } \omega \in \Omega.
\end{eqnarray}
Applying the inverse function theorem, Equation (\ref{sigmatff_rc2}) can be reexpressed as
\begin{eqnarray*}
\sigma_{t,f}^2(\omega) = &\frac{f(t,\omega)}{(P_{g,h}^W)'((P_{g,h}^W)^{-1}(F(t,\omega)))} \mathbf{1}_{\{0 < F(t,\omega) < 1\}} & \text{ almost everywhere for } t \geq 0 \text{ and } \omega \in \Omega,
\end{eqnarray*}
or equivalently of the form (\ref{def_expsol_f_rc1}) as $(P_{g,h}^W)'(t)=f_{g,h}(t)$ almost everywhere for any $t \geq 0$. Thus, we have shown that $\sigma_f^2$ is an explicit solution in the sense of Definition \ref{def_explicitsolution_f_rc2}.

\noindent Proof of $(ii) \implies (i)$. We assume that $\sigma_f^2$ is an explicit solution in the sense of Definition \ref{def_explicitsolution_f_rc2}. We have almost everywhere for $t \geq 0$ and $\omega \in \Omega$ that
\begin{eqnarray*}
P_{g,h}^{Z^f} (t | \omega)  & = & P_{g,h}^W (\int_0^t \sigma_{s,f}^2(\omega) ds)\\
 & = & P_{g,h}^W (\int_0^t \frac{f(s,\omega)}{f_{g,h}^W((P_{g,h}^W)^{-1}(F(s,\omega)))} \mathbf{1}_{\{0 < F(s,\omega) < 1\}} ds)\\ 
& = & P_{g,h}^W (\int_0^t f(s)((P_{g,h}^W)^{-1})'(F(s,\omega)) \mathbf{1}_{\{0 < F(s,\omega) < 1\}} ds)\\
& = & P_{g,h}^W ((P_{g,h}^W)^{-1})(F(t,\omega))\\
& = & F(t,\omega).
\end{eqnarray*}
where we use Equation (\ref{eq_theorem_rc2}) along with the assumption that $Z^f$ satisfies Assumption \ref{assDDS_rc1} in the first equality, Equation (\ref{def_expsol_f_rc2}) in the second equality, the inverse function theorem in the third equality, integration in the fourth equality and algebraic manipulation in the fifth equality.
We have thus shown that $\sigma_t^2$ satisfies Equation (\ref{tildesigma_f_def_rc2}), and thus that $\sigma_f^2$ is a solution in the sense of Definition \ref{def_solution_f_rc2}.
\end{proof}
\noindent The proof of Theorem \ref{theorem_f_rc2} is a direct application of Proposition \ref{prop_ass_f_rc2} and Proposition \ref{propiii_f_rc2}.
\begin{proof}[Proof of Theorem \ref{theorem_f_rc2}] To obtain (a), we apply Proposition \ref{prop_ass_f_rc2} along with Assumption \ref{assmain_f_rc1}. Then, an application of Proposition \ref{propiii_f_rc2} along with (a) yields (b). 
\end{proof}





\newpage 

\begin{funding}
The author was supported in part by Japanese Society for the Promotion of Science Grants-in-Aid for Scientific Research (B) 23H00807 and Early-Career Scientists 20K13470. 
\end{funding}



\bibliographystyle{imsart-nameyear} 
\bibliography{biblio_abb}       

\end{document}